\newcommand{\ot}{\otimes}
\newcommand{\vt}{\overline{\otimes}}
\newcommand{\ft}{\overline{\otimes}_{\cl{F}}}
\newcommand{\cl}{\mathcal}
\newcommand{\sub}{\subseteq}
\newcommand{\bfc}{\textit}
\newcommand{\ap}{\infty}
\newcommand{\wsp}{\overline{\mathrm{span}}^{\text{w*}}}
\newcommand{\wh}{\widehat}
\newcommand{\wt}{\widetilde}
\newcommand{\BLT}{B(L^{2}(G))}
\newcommand{\LO}{L^{1}(G)}
\newcommand{\LT}{L^{2}(G)}
\newcommand{\LI}{L^{\infty}(G)}
\newcommand{\CI}{\mathbb{C}1_{H}}
\newcommand{\CIK}{\mathbb{C}1_{K}}
\newcommand{\la}{\langle}
\newcommand{\ra}{\rangle}
\newcommand{\fcr}{\rtimes^{\cl{F}}}
\newcommand{\scr}{\overline{\rtimes}}
\newcommand{\B}{\mathrm{Bim}(J^{\perp})}
\newcommand{\R}{(\mathrm{Ran}J)^{\perp}}
\newtheorem{thm}{Theorem}[section]
\newtheorem{lem}[thm]{Lemma}
\newtheorem{pro}[thm]{Proposition}
\newtheorem{cor}[thm]{Corollary}
\theoremstyle{definition}
\newtheorem{defin}[thm]{Definition}
\newtheorem{remark}[thm]{Remark}
\begin{document}
	 
\title[Crossed products of dual operator spaces]{Crossed products of dual operator spaces by locally compact groups}
\author{Dimitrios Andreou}
\address{Department of Mathematics, National and Kapodistrian University of Athens, Athens 157 84, Greece}
\email{dimandreou95@gmail.com}
\maketitle

\begin{abstract} 
	If $ \alpha $ is an action of a locally compact group $ G $ on a dual operator space $ X $, then two generally different notions of crossed products are defined, namely the Fubini crossed product $ X\fcr_{\alpha} G $ and the spatial crossed product $ X\scr_{\alpha} G $. It is shown that $ X\fcr_{\alpha}G=X\scr_{\alpha}G $ if and only if the dual comodule action $ \wh{\alpha} $ of the group von Neumann algebra $ L(G) $ on $ X\fcr_{\alpha}G $ is non-degenerate. As an application, this yields an alternative proof of the result of Crann and Neufang \cite{CN} that the two notions coincide when $ G $ satisfies the approximation property (AP) of Haagerup and Kraus. Also, it is proved that the $ L(G) $-bimodules $ \B $ and $ \R $ defined in \cite{AKT} for a left ideal $ J $ of $ \LO $ are respectively isomorphic with  $ J^{\perp}\scr G $ and $ J^{\perp}\fcr G $. Therefore a necessary and sufficient condition so that $ \B=\R $ is obtained by the main result.  
\end{abstract}

\section{Introduction}  

Let $ s\mapsto\alpha_{s},\ s\in G $ be an action of a locally compact group $ G $ on a von Neumann algebra $ M $ by unital normal *-automorphisms. Then, $ \alpha $ defines a unital normal *-monomorphism $ \alpha\colon M\to L^{\ap}(G,M)\cong M\vt\LI $ via $ \alpha(x)(s)=\alpha_{s}^{-1}(x) $, for $ s\in G $, $ x\in M $. The crossed product $ M\rtimes_{\alpha}G $ of $ M $ by $ \alpha $ is defined to be the von Neumann subalgebra of $ M\vt\BLT $ generated by $ \alpha(M) $ and $ \mathbb{C}1\vt L(G) $, where $ L(G)=\lambda(G)'' $ is the left group von Neumann algebra. There are two alternative ways to describe $ M\rtimes_{\alpha}G $:
\begin{itemize}
	\item[(1)] On the one hand, from the covariance relations \[\alpha(\alpha_{s}(x))=(1\ot\lambda_{s})\alpha(x)(1\ot\lambda_{s})^{*},\quad x\in M,\ s\in G, \]
	it follows that $ M\rtimes_{\alpha}G $ is the w*-closure of the linear span of the products of the form $ (1\ot\lambda_{s})\alpha(x) $, $ x\in M,\ s\in G $. That is, $ M\rtimes_{\alpha}G $ is the normal $ L(G) $-bimodule generated by $ \alpha(M) $.
	\item[(2)] On the other, from the Digernes-Takesaki theorem (see e.g. \cite[Chapter X, \S1, Corollary 1.22]{Tak}) we have
	\[M\rtimes_{\alpha}G=\{x\in M\vt\BLT:\ (\alpha_{s}\ot \mathrm{Ad}\rho_{s})(x)=x,\ \forall s\in G \}, \]
	where $ \rho $ is the right regular representation of $ G $.  
\end{itemize}

These two alternative descriptions of the crossed product $ M\rtimes_{\alpha}G $ can both be generalized for group actions on dual operator spaces. Therefore, for a group action $ \alpha $ on a dual operator space $ X\sub B(H) $ by w*-continuous completely isometric isomorphisms, one can define again a w*-continuous complete isometry $ \alpha\colon X\to  X\vt\LI  $ and obtain the spatial crossed product
\[X\scr_{\alpha}G=\wsp\{(1_{H}\ot\lambda_{s})\alpha(x):\ s\in G,\ x\in X \} \]
and the Fubini crossed product
\[X\fcr_{\alpha}G=\{x\in X\vt\BLT:\ (\alpha_{s}\ot \mathrm{Ad }\rho_{s})(x)=x,\ \forall s\in G \}. \] 
 
The equality $ X\fcr_{\alpha}G=X\scr_{\alpha}G $ does not hold in general (see Remark \ref{rem7}). However, P. Salmi and A. Skalski in \cite{SS} have proved a generalization of the Digernes-Takesaki theorem in the case of a group action on a W*-TRO by TRO-morphisms. More precisely they proved that if $ X $ is a W*-TRO, $ G $ is an arbitrary locally compact group and $ \alpha $ is a W*-TRO morphism such that $ \alpha\colon X\to X\vt\LI $ is a non-degenerate TRO-morphism, then $ X\fcr_{\alpha}G= X\scr_{\alpha}G $ (for the definition of a non-degenerate TRO-morphism see e.g. Remark \ref{rem6}).

On the other hand, J. Crann and M. Neufang in \cite{CN} proved  that if  $ G $ is a locally compact group with the approximation property of U. Haagerup and J. Kraus \cite{HK} (for more details see section \ref{sec4} below) and $ X $ is an arbitrary dual operator space, then $ X\fcr_{\alpha}G=X\scr_{\alpha}G $. Also, they obtained the analogous result for group actions on general operator spaces (not necessarily dual), generalizing a result of O. Uuye and J. Zacharias for discrete group actions on operator spaces (see \cite{UZ}). 

The analogue of the Fubini crossed product, in the setting of general operator spaces, had already been studied by M. Hamana in \cite{Ha1} who proved a Takesaki-type duality theorem for Fubini crossed products.

The main purpose of this paper is to characterize those group actions on dual operator spaces for which the Fubini crossed product coincides with the spatial crossed product. More precisely, it is shown that, for an action $ \alpha $ of a locally compact group $ G $ on a dual operator space $ X $, we have $ X\fcr_{\alpha}G=X\scr_{\alpha}G $ if and only if the dual comodule action $ \wh{\alpha} $ of the group von Neumann algebra $ L(G) $ on $  X\fcr_{\alpha}G $ is non-degenerate (see Definitions \ref{def4.2}  and \ref{def2.16}). As applications this yields alternative proofs of the results of Crann and Neufang (see Proposition \ref{pro7.2}) and Salmi and Skalski (see Remark \ref{rem8}) mentioned above. We also prove that the $ L(G) $-bimodules $ \B $ and $ \R $ defined in \cite{AKT} for a closed left ideal $ J $ of $ \LO $ can both be realized as crossed products. This fact provides a perhaps more conceptual perspective for these bimodules; it also  yields a necessary and sufficient condition for their equality, as an application of our theorem.

In section 2 below we begin with preliminaries and notation on tensor products, comodules over Hopf-von Neumann algebras and crossed products of von Neumann algebras. In section 3 we compare the two notions of crossed products under discussion and we prove our main result (see Theorem \ref{mainthm}). In the same section we obtain an alternative proof of the result of Salmi and Skalski. In section 4 we give an altrernative proof of the result of  Crann and Neufang. In section 5 it is shown that  $ \B $ and $ \R $ arise naturally as crossed products and a necessary and sufficient condition for their equality is obtained. 

\section{Preliminaries and notation}    

Let $ X\sub B(H) $ and $ Y\sub B(K) $ be dual operator spaces, i.e. w*-closed subspaces of $ B(H) $ and $ B(K) $ respectively, where $ H,\ K $ are Hilbert spaces. The \bfc{spatial tensor product} of $ X $ and $ Y $ is the subspace of $ B(H\ot K) $ defined by \[X\vt Y=\wsp\{x\ot y:\ x\in X,\ y\in Y \}, \]
	where $ (x\ot y)(h\ot k) =(xh)\ot(yk)$, for $ h\in H ,\ k\in K$.
	
	 The \bfc{Fubini tensor product} of $ X $ and $ Y $ is the space:
	 \begin{align*} 
	    X\ft Y=\{x\in B(H\ot K) :\  
	    &(\mathrm{id}_{B(H)}\ot \phi)(x)\in X,\ (\omega\ot \mathrm{id }_{B(K)})(x)\in Y,\\  &\forall\omega\in B(H)_{*},\ \forall\phi\in B(K)_{*} \}.
	 \end{align*}
	
	Obviously, $ X\vt Y\sub X\ft Y $. 
	
	 If $ M $ is an injective von Neumann algebra (in particular, 
	of type I) then $ X\vt M= X\ft M $, for every dual operator space $ X $ (see \cite[Theorem 1.9]{Kra}) and this implies that	
	  \[X\ft Y=(X\vt B(K)) \cap (B(H)\vt Y).\]   
	 Also, for any von Neumann algebras $ M$ and $ N $, it holds that $ M\vt N= M\ft N $ (see \cite[Theorem 7.2.4]{ER}).	  

%\begin{defin}
	A \bfc{Hopf-von Neumann algebra} is a pair $ (M,\Delta) $, where $ M $ is a von Neumann algebra and $ \Delta\colon M\to M\vt M $ is a normal unital *-monomorphism called the \bfc{comultiplication} of $ M $, such that the coassociativity rule holds:
	\[(\Delta\otimes \mathrm{id }_{M})\circ\Delta=(\mathrm{id }_{M}\otimes\Delta)\circ\Delta. \]	
%\end{defin}	  
	 
%\begin{defin}\label{def2.1}
 Let $ (M,\Delta) $ be a Hopf von Neumann algebra. An	$ M $-\bfc{comodule} $ (X,\alpha) $ is a dual operator space $ X $ with a w*-continuous complete isometry $ \alpha\colon X\to X\ft M $ which satisfies
	\[(\alpha\otimes \mathrm{id }_{M})\circ\alpha=(\mathrm{id }_{X}\otimes\Delta)\circ\alpha. \]
	In this case, we say that $ \alpha $ is an \bfc{action} of $ M $ on $ X $ or an $ M $-\bfc{action} on $ X $.
		
	A w*-closed subspace $ Y $ of $ X $ is called an $ M $-\bfc{subcomodule} of $ X $ if $\alpha(Y)\sub Y\ft M$. In this case we write $ Y\leq X $ and $ Y $ is indeed an $ M $-comodule for the action $ \alpha|_{Y} $. 
	
	An $ M $-\bfc{comodule morphism} between $ M $-comodules $ (X,\alpha) $ and $ (Y,\beta) $ is a w*-w*-continuous complete contraction $ \phi\colon X\to Y $, such that
	\[\beta\circ\phi=(\phi\otimes \mathrm{id }_{M})\circ\alpha. \]
	
	An $ M $-comodule morphism is called an $ M $-\bfc{comodule monomorphism} (resp. isomorphism) if it is a complete isometry (resp. surjective complete isometry) and we write $ X\cong Y $ for isomorphic $ M $-comodules.
	
	If $ X $ is any dual operator space, then the Fubini tensor product $ X\ft M $ becomes an $ M $-comodule with the action $ \mathrm{id }_{X}\otimes\Delta\colon X\ft M\to X\ft M\ft M$, which is called a \bfc{canonical} $ M $-comodule.
	
	If $ N $ is a von Neumann algebra, then  a normal unital *-monomorphism $ \pi\colon N\to N\vt M $, such that 
	\[(\pi\otimes \mathrm{id }_{M})\circ\pi=(\mathrm{id }_{N}\otimes\Delta)\circ\pi\]
	will be called a W*-$ M $-\bfc{action} of $ M $ on $ N $ and $ (N,\pi) $ will be called a W*-$ M $-\bfc{comodule}. The terms W*-$ M $-\bfc{subcomodule}, W*-$ M $-\bfc{comodule morphism} etc, are defined accordingly. 
%\end{defin}	 

\begin{remark}\label{rem1} 
	 Let $ (M,\Delta) $ be a Hopf-von Neumann algebra. Every $ M $-comodule $ (X,\alpha) $ is isomorphic to an $ M $-subcomodule of a canonical $ M $-comodule, which may be taken to be the W*-$ M $-comodule $ (B(H)\vt M,\mathrm{id }_{B(H)}\otimes\Delta) $ for some Hilbert space $ H $.\\
	 Indeed, the image $ \alpha(X) $ under the action $ \alpha $ is an $ M $-subcomodule of the canonical $ M $-comodule $ X\ft M $, since we have:
	 \[(\mathrm{id }_{X}\otimes\Delta)\circ\alpha(X)=(\alpha\otimes \mathrm{id }_{M})\circ\alpha(X)\sub\alpha(X)\ft M \] 
	 and $ \alpha $ is an $ M $-comodule isomorphism of $ X $ onto $ \alpha(X) $ and thus
	 \[X\cong\alpha(X)\leq X\ft M. \]
	 Furthermore, we may suppose that $ X\sub B(H) $ as a w*-closed subspace for some Hilbert space $ H $, thus $ X\ft M \leq B(H)\vt M $.
\end{remark}

\begin{remark}\label{rem3}
	For any Hopf von Neumann algebra $ (M,\Delta) $, the predual $ M_{*} $ of $ M $ becomes naturally a Banach algebra with the product defined as:
	\[f\cdot g=(f\otimes g)\circ\Delta, \]
	for $ f,\ g\in M_{*} $. Furthermore, an $ M $-comodule $ (X,\alpha) $ becomes an $ M_{*} $-Banach module with the module operation defined as
	\[f\cdot x=(\mathrm{id }_{X}\otimes f)\circ\alpha(x),\quad f\in  M_{*},\ x\in X. \]
	Also, it is easy to see that a w*-continuous complete contraction $ \phi\colon X\to Y $ between two $ M $-comodules $ X $ and $ Y $ is an $ M $-comodule morphism if and only if $ \phi $ is an $ M_{*} $-module homomorphism.
	\end{remark}

%\begin{defin} 
	Let $ (X,\alpha) $ be an $ M $-comodule over a Hopf-von Neumann algebra $ (M,\Delta) $. The \bfc{fixed point subspace} of $ X $ is the operator space
	\[X^{\alpha}=\{x\in X:\ \alpha(x)=x\otimes1_{M} \}.\] 	
%\end{defin}

Note that $ X^{\alpha} $ is obviously an $ M $-subcomodule of $X$.

Another important notion concerning actions of Hopf-von Neumann algebras is \bfc{commutativity of actions}:

\begin{defin}\label{def2.3} 
	Let $ (M_{1},\Delta_{1}) $ and $ (M_{2},\Delta_{2}) $ be two Hopf-von Neumann algebras and $ \alpha_{1} $,  $ \alpha_{2} $ be actions of $ M_{1} $ and $ M_{2} $ on the same operator space $ X $ respectively. We say that $ \alpha_{1} $ and  $ \alpha_{2} $ \bfc{commute} if
	\[(\alpha_{1}\otimes \mathrm{id }_{M_{2}})\circ\alpha_{2}=(\mathrm{id }_{X}\otimes\sigma)\circ(\alpha_{2}\otimes \mathrm{id }_{M_{1}})\circ\alpha_{1}, \]
	where $ \sigma\colon M_{2}\vt M_{1}\to M_{1}\vt M_{2}:\ x\otimes y\mapsto y\otimes x $ is the flip isomorphism.
\end{defin}
\begin{lem}\label{lem2.3}\cite[Lemma 5.2]{Ha1} 
	With the notation and assumptions of Definition \ref{def2.3}, we have that the fixed point subspace $X^{\alpha_{1}} $ is an $ M_{2} $-subcomodule of $ (X,\alpha_{2}) $, i.e. the restriction $ \alpha_{2}|_{X^{\alpha_{1}}} $ is an action of $ M_{2} $ on $ X^{\alpha_{1}} $.
\end{lem}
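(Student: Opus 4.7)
My plan is to use the commutativity relation directly on a fixed point $x\in X^{\alpha_1}$, apply a slice by an arbitrary $\phi\in (M_2)_*$, and then invoke the defining property of the Fubini tensor product.

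First I would fix $x\in X^{\alpha_1}$, so $\alpha_1(x)=x\ot 1_{M_1}$, and write out what the commutativity identity of Definition \ref{def2.3} says when applied to $x$:
\[
(\alpha_1\ot \mathrm{id}_{M_2})\circ\alpha_2(x)
=(\mathrm{id}_X\ot\sigma)\circ(\alpha_2\ot \mathrm{id}_{M_1})\circ\alpha_1(x)
=(\mathrm{id}_X\ot\sigma)\bigl(\alpha_2(x)\ot 1_{M_1}\bigr),
\]
which is an equality in $X\ft M_1\ft M_2$. The second equality uses only that $\alpha_1(x)=x\ot 1_{M_1}$, together with naturality of $\alpha_2\ot \mathrm{id}_{M_1}$ on an elementary tensor of this form.

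Next, for an arbitrary $\phi\in (M_2)_*$, I would apply the slice map $\mathrm{id}_X\ot \mathrm{id}_{M_1}\ot\phi$ to both sides. Writing $y_\phi:=(\mathrm{id}_X\ot\phi)\circ\alpha_2(x)\in X$, the left-hand side becomes $\alpha_1(y_\phi)$, by w*-continuity of $\alpha_1$ and of slice maps. The right-hand side, after tracking the flip $\sigma\colon M_2\vt M_1\to M_1\vt M_2$, yields $y_\phi\ot 1_{M_1}$ (the slicing by $\phi$ now occurring on the third tensor factor, while the $1_{M_1}$ is left untouched in the second). Hence
\[
\alpha_1(y_\phi)=y_\phi\ot 1_{M_1},
\]
so $y_\phi\in X^{\alpha_1}$ for every $\phi\in (M_2)_*$.

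Finally I would upgrade this to the statement $\alpha_2(x)\in X^{\alpha_1}\ft M_2$. Writing $X\sub B(H)$ and $M_2\sub B(K)$, the element $\alpha_2(x)$ already lies in $X\ft M_2\sub B(H)\vt B(K)$, so the $B(H)_*$-slices lie in $M_2$ automatically. For the $B(K)_*$-slices, observe that for any $\psi\in B(K)_*$ the slice $(\mathrm{id}_X\ot\psi)(\alpha_2(x))$ depends only on $\psi|_{M_2}\in (M_2)_*$, because $\alpha_2(x)$ lies in the Fubini product $X\ft M_2$; by the preceding paragraph this slice is in $X^{\alpha_1}$. Combining both slice conditions gives $\alpha_2(x)\in X^{\alpha_1}\ft M_2$, as required.

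The only step that requires care is the slice computation on the right-hand side, where the flip $\sigma$ has to be tracked carefully so that the predual functional $\phi$ is seen to act on $M_2$ (the factor carrying $\alpha_2(x)$) rather than on the $1_{M_1}$ factor; once this bookkeeping is done the proof is essentially a direct consequence of commutativity plus the universal property of the Fubini tensor product.
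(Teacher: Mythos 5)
Your argument is correct: the slice-map computation applied to the commutativity identity, together with the observation that for $z\in X\ft M_{2}$ one has $\omega\bigl((\mathrm{id}\ot\psi)(z)\bigr)=\psi\bigl((\omega\ot\mathrm{id})(z)\bigr)=\psi|_{M_{2}}\bigl((\omega\ot\mathrm{id})(z)\bigr)$, so that the right slices depend only on $\psi|_{M_{2}}$, gives exactly $\alpha_{2}(X^{\alpha_{1}})\sub X^{\alpha_{1}}\ft M_{2}$. The paper itself gives no proof, deferring to \cite[Lemma 5.2]{Ha1}, and your argument is the standard one that reference uses, so nothing further is needed beyond noting (as the paper does elsewhere) that $X^{\alpha_{1}}$ is w*-closed.
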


For the rest of this paper, $ G $ will denote a locally compact (Hausdorff) group with left Haar measure $ ds $ and modular function $ \Delta_{G} $. We identify $L^{\infty}(G)  $ with the multiplicative operators acting on $ L^{2}(G) $. Also, the fundamental unitary operator $ V_{G}\in B(L^{2}(G))\vt B(L^{2}(G))\simeq B(L^{2}(G\times G)) $ defined by the formula 
\[V_{G}f(s,t)=f(t^{-1}s,t),\quad f\in L^{2}(G\times G),\ s,\ t\in G, \]
gives rise to a comultiplication $ \alpha_{G}\colon L^{\infty}(G)\to L^{\infty}(G)\vt L^{\infty}(G) $ on $ L^{\infty}(G) $ via 
\[\alpha_{G}(f)=V_{G}^{*}(f\otimes1)V_{G} \] 
and it is easy to see that 
\[ \alpha_{G}(f)(s,t)=f(ts),\quad s,\ t\in G, \] (identifying $ L^{\infty}(G\times G)\simeq L^{\infty}(G)\vt L^{\infty}(G) $). Thus, $ (L^{\infty}(G),\alpha_{G}) $ is a Hopf von Neumann algebra.

On the other hand, the unitary $ U_{G}\in\BLT\vt\BLT $, defined by
\[U_{G}\xi(s,t)=\Delta_{G}(t)^{1/2}\xi(st,t),\quad s,\ t\in G,\ \xi\in L^{2}(G\times G), \]
induces another comultiplication  on $ \LI $, namely  
\[ \alpha'_{G}(f)=U_{G}(f\otimes1)U_{G}^{*},\quad f\in\LI, \]
or equivalently 
\[\alpha'_{G}(f)(s,t)=f(st),\quad s,\ t\in G. \]
Obviously, we have
\[\alpha_{G}=\sigma\circ\alpha'_{G}, \]
where $ \sigma\colon\BLT\vt\BLT\to\BLT\vt\BLT:\ x\ot y\mapsto y\ot x $, is the flip mapping.

Another basic example of a Hopf-von Neumann algebra, is the left von Neumann algebra of $ G $, i.e. the algebra $ L(G):= \lambda(G)''\subseteq B(L^{2}(G)) $ generated by the left regular representation $ \lambda\colon G\ni s\mapsto\lambda_{s}\in B(L^{2}(G)) $,
\[\lambda_{s}\xi(t)=\xi(s^{-1}t) , \qquad\xi\in L^{2}(G). \]
The unitary operator $ W_{G}\in B(L^{2}(G\times G)) $, given by the formula
\[W_{G}f(s,t)=f(s,st),\quad f\in L^{2}(G\times G),\ s,\ t\in G, \]
induces the comultiplication $ \delta_{G}\colon L(G)\to L(G)\vt L(G) $ on $ L(G) $ via
\[\delta_{G}(x)=W_{G}^{*}(x\otimes1)W_{G}. \]
It is easy to verify that 
\[\delta_{G}(\lambda_{s})=\lambda_{s}\otimes\lambda_{s},\quad s\in G. \]

Recall the Fourier algebra $ A(G) $ of $ G $ (see e.g. \cite{Ey}):
\[A(G)=\{u\colon G\to \mathbb{C}:\ \exists\xi, \eta\in\LT,\ \forall s\in G,\ u(s)=\la \lambda_{s}\xi,\ \eta \ra \}. \]
With the pointwise product, $ A(G) $  is a Banach algebra with respect to the norm
\[||u||_{A(G)}=\inf\{||\xi||||\eta||:\ \xi, \eta\in\LT, \text{ such that } u(s)=\la \lambda_{s}\xi,\ \eta \ra  \} \]
and it is isometrically isomorphic with the predual $ L(G)_{*} $ of the group von Neumann algebra, the duality given by
\[\la\lambda_{s},\ u\ra=u(s),\qquad s\in G,\ u\in A(G). \]

The pointwise product on $ A(G) $ coincides with that induced on the predual $ L(G)_{*} $ by the comultiplication $ \delta_{G} $ of $ L(G) $, because:
\[\la \lambda_{s} ,\ uv\ra=u(s)v(s)=\la \lambda_{s} ,\ u\ra\la \lambda_{s} ,\ v\ra=\la \lambda_{s}\ot\lambda_{s} ,\ u\ot v\ra=\la \delta_{G}(\lambda_{s}),\ u\ot v\ra. \]
In the following, $ A(G) $ will be deliberately identified with $ L(G)_{*} $.

Note that $ V_{G}\in L(G)\vt L^{\infty}(G) $ and $ W_{G}\in L^{\infty}(G)\vt L(G) $. Furthermore, $ \alpha_{G} $ and $ \delta_{G} $ extend to actions of $ L^{\infty}(G) $ and $ L(G) $ on $ B(L^{2}(G)) $ respectively, via the formulas
\[ \alpha_{G}(x)=V_{G}^{*}(x\otimes1)V_{G},\quad x\in B(L^{2}(G)),  \]
\[\delta_{G}(x)=W_{G}^{*}(x\otimes1)W_{G}, \quad x\in B(L^{2}(G)). \]

Also, recall the right  regular representation of $ G $ on $ L^{2}(G) $:
\[\rho_{s}f(t)=\Delta_{G}(s)^{1/2}f(ts),\qquad s,\ t\in G,\ f\in L^{2}(G). \]
We denote by $ R(G)=\rho(G)'' $ the right group von Neumann algebra and it is easy to verify as well that $ U_{G}\in R(G)\vt L^{\ap}(G) $.

In the following, we assume always that $ \LI $ and $ L(G) $ are considered as Hopf-von Neumann algebras with respect to $ \alpha_{G} $ and $ \delta_{G} $ respectively.

Now, let us collect some well known facts about crossed products of von Neumann algebras (see for example \cite{NaTa}, \cite{Stra}, \cite{SVZ1}, \cite{SVZ2}) and \cite{Tak}), which will be used in the sequel:

\begin{thm}\label{thm1.1}
	Given a (pointwise) group action on a von Neumann algebra $ M $, i.e. a w*-continuous representation $\alpha\colon G\to \mathrm{Aut}(M) $ by *-automorphisms of the von Neumann algebra $ M $, for any $ x\in M $ the w*-continuous function $ s\mapsto\alpha^{-1}_{s}(x) $ defines an element $ \pi_{\alpha}(x)\in L^{\infty}(G,M)\simeq M\vt L^{\infty}(G) $ and the map $ \pi_{\alpha}\colon M\to M\vt L^{\infty}(G) $ is an action of $ (L^{\infty}(G),\alpha_{G}) $ on $ M $, i.e. a unital normal *-monomorphism satisfying
	\[(\pi_{\alpha}\otimes \mathrm{id }_{L^{\infty}(G)})\circ\pi_{\alpha}=(\mathrm{id }_{M}\otimes\alpha_{G})\circ\pi_{\alpha} \]
	Conversely, for every W*-$ \LI $-action $ \pi\colon M\to M\vt L^{\infty}(G)  $ of $ (L^{\infty}(G),\alpha_{G}) $ on $ M $, there exists a unique w*-continuous representation $\alpha\colon G\to \mathrm{Aut}(M)  $ by *-automorphisms of $ M $, such that $ \pi_{\alpha}=\pi $. 
\end{thm}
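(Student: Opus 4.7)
The forward direction is a direct verification. Given a w*-continuous $\alpha\colon G \to \mathrm{Aut}(M)$, for each $x \in M$ the map $s \mapsto \alpha_s^{-1}(x)$ is bounded by $\|x\|$ and w*-continuous, so it defines an element $\pi_\alpha(x) \in L^\infty(G, M) \cong M \vt \LI$. One checks pointwise that $\pi_\alpha$ is unital, multiplicative and $*$-preserving (inheriting these from the $\alpha_s^{-1}$), injective (by evaluation at $s = e$), and normal (via a Fubini-type pairing against $L^1(G, M_*) \cong (M \vt \LI)_*$). The coaction identity, read on both sides as a function of $(s_1, s_2) \in G \times G$, then reduces to
\[\alpha_{s_1}^{-1}\alpha_{s_2}^{-1}(x) = \alpha_{s_2 s_1}^{-1}(x),\]
which holds since $\alpha$ is a homomorphism and matches the right-hand side via the convention $\alpha_G(f)(s_1, s_2) = f(s_2 s_1)$ recorded in the paper.

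For the converse, given a W*-action $\pi\colon M \to M \vt \LI$, the plan is to extract for each $x \in M$ a w*-continuous representative $\tilde\pi(x)\colon G \to M$ of the class $\pi(x)$ and then set $\alpha_s^{-1}(x) := \tilde\pi(x)(s)$. Reading the coaction identity through the identification $M \vt \LI \vt \LI \cong L^\infty(G \times G, M)$ yields the key cocycle relation
\[\tilde\pi(\tilde\pi(x)(t))(s) = \tilde\pi(x)(ts) \quad \text{for a.e.\ } (s, t) \in G \times G,\]
which is simultaneously the mechanism for producing a canonical continuous representative and the source of the remaining properties. To perform the extraction I would smooth $\pi(x)$ by convolution with an approximate identity $\{\phi_i\} \subset \LO$, obtaining w*-continuous functions $\pi(x) \ast \phi_i$, and then invoke the cocycle relation to identify these smoothings with evaluations of $\pi$ on elements of $M$, passing to a canonical w*-continuous limit independent of the approximate identity.

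With the representative in hand, each $\alpha_s^{-1}$ inherits unitality and the $*$-homomorphism property pointwise from $\pi$, and the cocycle relation read pointwise gives $\alpha_s^{-1}\alpha_t^{-1} = \alpha_{ts}^{-1}$, so $s \mapsto \alpha_s$ is a group homomorphism and each $\alpha_s$ is a $*$-automorphism with inverse $\alpha_{s^{-1}}$. The w*-continuity of $s \mapsto \alpha_s$ is just the continuity of the chosen representative, and uniqueness is immediate: any other $\beta$ with $\pi_\beta = \pi_\alpha$ must agree with $\alpha$ at every $s$ by pointwise evaluation. The principal obstacle is clearly the extraction of the w*-continuous representative, since pointwise evaluation at $s \in G$ is not a normal functional on $\LI$; the cocycle relation provides precisely the compatibility data needed to circumvent this, and is in fact the hallmark of W*-coactions coming from pointwise actions.
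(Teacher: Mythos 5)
The paper itself contains no proof of this statement: Theorem \ref{thm1.1} appears in the list of ``well known facts about crossed products of von Neumann algebras'' and is justified only by the citations \cite{NaTa}, \cite{Stra}, \cite{SVZ1}, \cite{SVZ2}, \cite{Tak}, so there is no in-text argument to compare yours against. Judged on its own terms, your forward direction is complete and correct: boundedness and w*-continuity of $s\mapsto\alpha_s^{-1}(x)$ give the element of $L^{\infty}(G,M)\cong M\vt L^{\infty}(G)$, the algebraic properties are checked pointwise, normality follows from the pairing with $L^{1}(G,M_{*})$, and your reduction of the coassociativity identity to $\alpha_{s_1}^{-1}\alpha_{s_2}^{-1}=\alpha_{s_2s_1}^{-1}$ is consistent with the paper's convention $\alpha_{G}(f)(s,t)=f(ts)$.

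The converse is where the entire content of the theorem lies, and your write-up stops exactly at the hard point. You propose to smooth $\pi(x)$ by an approximate identity $\{\phi_i\}\subset L^{1}(G)$ and ``pass to a canonical w*-continuous limit independent of the approximate identity,'' but you give no reason such a limit exists. The smoothings converge to $\pi(x)$ only in the w*-topology of $L^{\infty}(G,M)$, which says nothing about convergence pointwise in $s$ (or uniformly on compacta), and pointwise convergence is exactly what producing a continuous representative requires; a generic element of $L^{\infty}(G,M)$ has no such representative, so the coaction identity must do real work here, not merely supply ``compatibility data.'' The standard way to close the gap --- the route taken in the cited references --- is to avoid extracting a representative altogether. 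Put $\theta_t:=\mathrm{id}_M\otimes L_t$ with $(L_tf)(s)=f(ts)$; the coaction identity, read exactly as you read it, says $\theta_t(\pi(x))=\pi\bigl(\pi(x)(t)\bigr)$ for almost every $t$. Since $t\mapsto\theta_t(y)$ is w*-continuous for each fixed $y$ and $\pi(M)$ is a w*-closed von Neumann subalgebra, almost-everywhere membership upgrades to $\theta_t(\pi(M))\subseteq\pi(M)$ for \emph{every} $t$. One then defines $\alpha_t^{-1}:=\pi^{-1}\circ\theta_t\circ\pi$, which is a normal *-automorphism, satisfies $\alpha_t^{-1}\alpha_{t'}^{-1}=\alpha_{t't}^{-1}$ because $L_tL_{t'}=L_{t't}$, is w*-continuous in $t$ because $\pi^{-1}$ is normal, and satisfies $\pi_\alpha=\pi$ because $\pi(x)(t)=\pi^{-1}(\theta_t(\pi(x)))=\alpha_t^{-1}(x)$ for a.e.\ $t$. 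Your uniqueness argument is fine once one adds that two w*-continuous functions agreeing almost everywhere agree everywhere. As written, your converse is a plan rather than a proof.
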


The \bfc{crossed product} of $ M $ by the action $ \alpha $ is defined to be the von Neumann subalgebra of $ M\vt B(L^{2}(G)) $ generated by $ \pi_{\alpha}(M) $ and $ \mathbb{C}\vt L(G) $, that is 
\[M\rtimes_{\alpha}G=(\pi_{\alpha}(M)\cup\mathbb{C}\vt L(G))''. \]
Because of the covariance relations:\[\pi_{\alpha}(\alpha_{s}(x))=(1\ot\lambda_{s})\pi_{\alpha}(x)(1\ot\lambda_{s})^{*},\quad x\in M,\ s\in G, \]
we get that \[M\rtimes_{\alpha}G =\wsp\{(1\ot\lambda_{s})\pi_{\alpha}(x):\ s\in G,\ x\in M \},\]
where span denotes the linear span.

For a group action $ \alpha $ on $ M $ we set $M^{\alpha}:=\{x\in M:\ \alpha_{s}(x)=x,\ \forall s\in G \} $.
Then, we have that $ M^{\pi_{\alpha}}=M^{\alpha}$, i.e.
\[\pi_{\alpha}(x)=x\otimes1\iff\alpha_{s}(x)=x,\ \forall s\in G. \]

Fix a group action  $ \alpha\colon G\to \mathrm{Aut}(M) $ on a von Neumann algebra $ M $ and consider the action $ \beta=\alpha\otimes \mathrm{Ad }\rho $ on $ M\vt B(L^{2}(G)) $, that is $ \beta_{s}=\alpha_{s}\otimes \mathrm{Ad }\rho_{s} $, for all $ s\in G $.

\begin{pro}[Dual action]
	If we set 
	\[ \widehat{\alpha}=(\mathrm{id }_{M}\otimes\delta_{G})|_{ M\rtimes_{\alpha}G}, \]
	where $ \delta_{G} $ is considered as an $ L(G) $-action on $ \BLT $ and $ \mathrm{id }_{M}\otimes\delta_{G} $ is an $ L(G) $-action on $ M\vt\BLT $, then $ \widehat{\alpha} $ is a W*-$ L(G) $-action on $ M\rtimes_{\alpha}G $, which is called the \bfc{dual} of $ \alpha $. Note that 
	\[\widehat{\alpha}(x)=\mathrm{Ad }_{(1\otimes W_{G}^{*})}(x\otimes1),\quad x\in M\rtimes_{\alpha}G. \]
	
\end{pro}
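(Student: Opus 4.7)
The plan is to verify the three assertions of the proposition: (i) $\widehat{\alpha}$ actually takes values in $(M\rtimes_{\alpha}G)\vt L(G)$; (ii) it satisfies the $L(G)$-coaction identity and is a normal, unital $*$-monomorphism; (iii) the spatial formula $\widehat{\alpha}(x)=\mathrm{Ad}_{(1\ot W_{G}^{*})}(x\ot 1)$.

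For (i) I would exploit the spatial description $M\rtimes_{\alpha}G=\wsp\{(1\ot\lambda_{s})\pi_{\alpha}(x):s\in G,\ x\in M\}$ and check the inclusion on these generators, since $\mathrm{id}_{M}\ot\delta_{G}$ is w*-continuous and linear. The two computations I need are $\delta_{G}(\lambda_{s})=\lambda_{s}\ot\lambda_{s}$ (recorded in the preliminaries) and $\delta_{G}(f)=f\ot 1$ for every $f\in\LI$. The latter I would derive as follows: since $W_{G}\in\LI\vt L(G)$ and $\LI$ is abelian, $f\ot 1$ lies in the centre of $\LI\vt L(G)$ and hence commutes with $W_{G}$, so $W_{G}^{*}(f\ot 1)W_{G}=f\ot 1$. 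Since $\pi_{\alpha}(x)\in M\vt\LI$, a slicewise argument then gives $(\mathrm{id}_{M}\ot\delta_{G})(\pi_{\alpha}(x))=\pi_{\alpha}(x)\ot 1$. Combining the two,
\[(\mathrm{id}_{M}\ot\delta_{G})\bigl((1\ot\lambda_{s})\pi_{\alpha}(x)\bigr)=\bigl((1\ot\lambda_{s})\pi_{\alpha}(x)\bigr)\ot\lambda_{s},\]
which manifestly lies in $(M\rtimes_{\alpha}G)\vt L(G)$. Passing to w*-closed linear spans settles (i).

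For (ii), the map $\delta_{G}$ on $B(\LT)$ is itself a coaction of $L(G)$ (the coassociativity $(\delta_{G}\ot\mathrm{id})\circ\delta_{G}=(\mathrm{id}\ot\delta_{G})\circ\delta_{G}$ is essentially the pentagonal identity for $W_{G}$). Tensoring on the left with $\mathrm{id}_{M}$ and restricting to $M\rtimes_{\alpha}G$ transfers this identity to $\widehat{\alpha}$. Normality, unitality and the $*$-homomorphism property are inherited from $\delta_{G}$, which on $B(\LT)$ is the spatial $*$-isomorphism $\mathrm{Ad}_{W_{G}^{*}}\circ(\cdot\ot 1)$; injectivity follows from the same observation. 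For (iii), I would verify the formula on elementary tensors $m\ot b\in M\vt B(\LT)$:
\[(\mathrm{id}_{M}\ot\delta_{G})(m\ot b)=m\ot W_{G}^{*}(b\ot 1)W_{G}=(1\ot W_{G}^{*})(m\ot b\ot 1)(1\ot W_{G}),\]
and then extend to all of $M\vt B(\LT)$ by w*-continuity of both sides.

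The only real point of friction is step (i); without the observation that $\delta_{G}$ is trivial on $\LI$, one would only conclude $\widehat{\alpha}(M\rtimes_{\alpha}G)\sub M\vt B(\LT)\vt L(G)$. The identity $\delta_{G}(f)=f\ot 1$ is precisely what ``absorbs'' the coaction on the $\pi_{\alpha}(M)$ piece and leaves only the $L(G)$-generators $1\ot\lambda_{s}$ to be transformed (via $\delta_{G}(\lambda_{s})=\lambda_{s}\ot\lambda_{s}$), keeping the output inside $(M\rtimes_{\alpha}G)\vt L(G)$.
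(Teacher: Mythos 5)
Your proof is correct, and it is the standard argument: the paper states this proposition without proof, as one of a list of well-known facts cited from the literature on crossed products. Your key step is essentially the computation the paper itself carries out in the proof of Proposition \ref{pro2.13} for the dual action in the dual-operator-space setting: there too the point is that $\delta_{G}(f)=f\ot 1$ for $f\in\LI$ (equivalently, that $W_{G}\in\LI\vt L(G)$ commutes with $\pi_{\alpha}(x)\ot 1$) together with $\delta_{G}(\lambda_{s})=\lambda_{s}\ot\lambda_{s}$, yielding $\wh{\alpha}\bigl((1\ot\lambda_{s})\pi_{\alpha}(x)\bigr)=\bigl((1\ot\lambda_{s})\pi_{\alpha}(x)\bigr)\ot\lambda_{s}$ and hence the containment in $(M\rtimes_{\alpha}G)\vt L(G)$.
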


\begin{pro}\label{pro4.1} 
	 The corresponding W*-$ \LI $-actions $ \pi_{\alpha} $ and $ \pi_{\beta} $ satisfy:
	\[\pi_{\beta}=(\mathrm{id }_{M}\otimes \mathrm{Ad }U_{G}^{*})\circ(\mathrm{id }_{M}\otimes\sigma)\circ(\pi_{\alpha}\otimes \mathrm{id }_{B(L^{2}(G))})\ , \]
	where $ U_{G} $ is the unitary defined above and $ \sigma $ is the flip mapping on $\BLT\vt$ $\BLT$. Furthermore, $ \pi_{\beta} $ commutes with the $ L(G) $-action $ \mathrm{id }_{M}\otimes\delta_{G} $ (see Definition \ref{def2.3}).
\end{pro}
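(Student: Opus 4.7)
The plan is to verify the explicit formula for $\pi_{\beta}$ first, and then deduce the commutation statement from it via a ``twisted'' commutation identity between $W_{G}$ and the right regular representation.

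For the formula, I would check it on elementary tensors $x\ot T\in M\vt\BLT$; since both sides are w*-continuous, this suffices. Writing (formally, as a w*-convergent sum) $\pi_{\alpha}(x)=\sum_{i}m_{i}\ot f_{i}$ with $m_{i}\in M$ and $f_{i}\in\LI$, the right-hand side unfolds to
\[
  (\mathrm{id}_{M}\ot\mathrm{Ad}\,U_{G}^{*})\circ(\mathrm{id}_{M}\ot\sigma)(\pi_{\alpha}(x)\ot T)=\sum_{i}m_{i}\ot U_{G}^{*}(T\ot f_{i})U_{G}.
\]
The crucial observation is that $U_{G}\in R(G)\vt\LI$ can be identified with the direct integral $\int_{G}^{\oplus}\rho_{t}\,dt$ with respect to the slot-$2$ fibre variable $t$; conjugation then yields $U_{G}^{*}(T\ot 1)U_{G}=\pi_{\mathrm{Ad}\rho}(T)$, the W*-$\LI$-action assigned by Theorem \ref{thm1.1} to the pointwise action $\mathrm{Ad}\,\rho$ on $\BLT$. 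Moreover, $U_{G}$ commutes with $1\ot f$ for every $f\in\LI$ (decomposable operators commute with diagonal ones), hence $U_{G}^{*}(T\ot f_{i})U_{G}=\pi_{\mathrm{Ad}\rho}(T)\cdot(1\ot f_{i})$. Evaluating $\sum_{i}m_{i}\ot\pi_{\mathrm{Ad}\rho}(T)(1\ot f_{i})$ at the fibre $s\in G$ gives $\alpha_{s}^{-1}(x)\ot\rho_{s}^{-1}T\rho_{s}=\beta_{s}^{-1}(x\ot T)=\pi_{\beta}(x\ot T)(s)$, as required.

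For the commutation, unfolding Definition \ref{def2.3} for the W*-$\LI$-action $\pi_{\beta}$ and the W*-$L(G)$-action $\mathrm{id}_{M}\ot\delta_{G}$, and using Theorem \ref{thm1.1}, one sees that the stated condition translates into pointwise $\beta$-equivariance of $\mathrm{id}_{M}\ot\delta_{G}$:
\[
  (\mathrm{id}_{M}\ot\delta_{G})\circ\beta_{s}=(\beta_{s}\ot\mathrm{id}_{L(G)})\circ(\mathrm{id}_{M}\ot\delta_{G}), \qquad s\in G.
\]
Since the $M$-slot is inert, this reduces to the $\mathrm{Ad}\,\rho$-equivariance of $\delta_{G}$ on $\BLT$, namely $\delta_{G}(\rho_{s}T\rho_{s}^{-1})=(\mathrm{Ad}\,\rho_{s}\ot\mathrm{id})\delta_{G}(T)$. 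I would establish this from the twisted commutation identity
\[
  W_{G}(\rho_{s}\ot 1)=(\rho_{s}\ot\lambda_{s})W_{G}, \qquad s\in G,
\]
which is a short computation on $L^{2}(G\times G)$ directly from the defining formulae of $W_{G}$, $\rho_{s}$, and $\lambda_{s}$. Substituting this relation and its adjoint into $\delta_{G}(\rho_{s}T\rho_{s}^{-1})=W_{G}^{*}(\rho_{s}\ot 1)(T\ot 1)(\rho_{s}^{-1}\ot 1)W_{G}$, the emerging $1\ot\lambda_{s}^{\pm 1}$ factors slide past $T\ot 1$ (they act on disjoint tensor slots) and cancel, producing $(\rho_{s}\ot 1)W_{G}^{*}(T\ot 1)W_{G}(\rho_{s}^{-1}\ot 1)=(\mathrm{Ad}\,\rho_{s}\ot\mathrm{id})\delta_{G}(T)$, as desired.

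The main obstacle will be the tensor-slot bookkeeping: correctly threading the flip $\sigma$ through Definition \ref{def2.3} to arrive at the pointwise equivariance reformulation, and verifying the twisted commutation relation between $W_{G}$ and $\rho_{s}$. Once these are in hand, both assertions follow by direct manipulation.
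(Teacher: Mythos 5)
Your proof is correct, but note that the paper itself offers no proof of Proposition \ref{pro4.1}: it is stated among the preliminaries as a known fact about crossed products of von Neumann algebras, with the reader referred to the cited literature (Nakagami--Takesaki, Str\u{a}til\u{a}--Voiculescu--Zsid\'{o}, Takesaki). So your self-contained verification is genuinely additional content rather than a variant of an in-paper argument. The two identities your proof hinges on both check out: $U_{G}$ is the decomposable operator over the second variable with fibre $\rho_{t}$, whence $U_{G}^{*}(T\ot 1)U_{G}=\pi_{\mathrm{Ad}\rho}(T)$ and $U_{G}$ commutes with $1\ot f$ for $f\in\LI$; and the twisted relation $W_{G}(\rho_{s}\ot 1)=(\rho_{s}\ot\lambda_{s})W_{G}$ holds, since $((\rho_{s}\ot\lambda_{s})W_{G}f)(x,t)=\Delta_{G}(s)^{1/2}f(xs,(xs)(s^{-1}t))=\Delta_{G}(s)^{1/2}f(xs,xt)=(W_{G}(\rho_{s}\ot 1)f)(x,t)$, after which the $1\ot\lambda_{s}^{\pm1}$ factors slide past $T\ot 1$ and cancel exactly as you describe. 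Your reduction of Definition \ref{def2.3} to the pointwise equivariance $(\mathrm{id}_{M}\ot\delta_{G})\circ\beta_{s}=(\beta_{s}\ot\mathrm{id}_{L(G)})\circ(\mathrm{id}_{M}\ot\delta_{G})$ is also sound: under the identification of the $\LI$ leg with $L^{\ap}(G,\cdot)$, the two sides of the commutation identity evaluate at $s$ to $(\beta_{s}^{-1}\ot\mathrm{id}_{L(G)})$ applied to $(\mathrm{id}_{M}\ot\delta_{G})(y)$ and to $(\mathrm{id}_{M}\ot\delta_{G})(\beta_{s}^{-1}(y))$ respectively. The one cosmetic flaw is the expansion $\pi_{\alpha}(x)=\sum_{i}m_{i}\ot f_{i}$: a general element of $M\vt\LI$ is a w*-limit of such sums, not itself such a sum. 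This is harmless and easily repaired --- verify instead the fibrewise identity $\bigl((\mathrm{id}_{M}\ot\mathrm{Ad}U_{G}^{*})(\mathrm{id}_{M}\ot\sigma)(z\ot T)\bigr)(s)=(1\ot\rho_{s}^{*})(z(s)\ot T)(1\ot\rho_{s})$ for elementary $z\in M\vt\LI$, extend to all $z$ by normality, and then specialise to $z=\pi_{\alpha}(x)$, whose fibre at $s$ is $\alpha_{s}^{-1}(x)$ --- but you should phrase it that way rather than via a fictitious sum decomposition.
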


\begin{lem}\label{lem3.2} 
	The action $ \beta $ of $ G $ on $ M\vt B(L^{2}(G)) $ satisfies:
	\[(M\vt L^{\ap}(G))^{\pi_{\beta}}=(M\vt L^{\ap}(G))^{\beta}=\pi_{\alpha}(M). \]
\end{lem}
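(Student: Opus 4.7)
The first equality is essentially free. The map $\pi_\beta$ is the $\LI$-action on $M\vt\BLT$ corresponding to the group action $\beta$ via Theorem \ref{thm1.1}. According to the remark just after Theorem \ref{thm1.1} applied to $\beta$, we have $(M\vt\BLT)^{\pi_{\beta}}=(M\vt\BLT)^{\beta}$. Since $\mathrm{Ad}\rho_{s}$ preserves $\LI\subset\BLT$ (right translation carries $\LI$ to itself), the subspace $M\vt\LI$ is globally $\beta$-invariant, so intersecting both sides of the above equality with $M\vt\LI$ yields
\[(M\vt\LI)^{\pi_{\beta}}=(M\vt\LI)^{\beta}. \]

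For the second equality I would identify $M\vt\LI$ with $L^{\infty}(G,M)$ and compute $\beta_{s}$ explicitly. A short calculation with the definition of $\rho_{s}$ shows that, for $f\in\LI$, $(\mathrm{Ad}\rho_{s})(f)(t)=f(ts)$, so for $F\in L^{\infty}(G,M)$,
\[\beta_{s}(F)(t)=\alpha_{s}(F(ts)),\quad s,t\in G. \]
The fixed point condition $\beta_{s}(F)=F$ for every $s$ then reads $F(t)=\alpha_{s}(F(ts))$, or equivalently $F(ts)=\alpha_{s}^{-1}(F(t))$. Setting $t=e$ gives $F(s)=\alpha_{s}^{-1}(F(e))$, i.e.\ $F=\pi_{\alpha}(F(e))\in\pi_{\alpha}(M)$. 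The reverse inclusion is a direct verification: for $x\in M$ and $F=\pi_{\alpha}(x)$,
\[\beta_{s}(F)(t)=\alpha_{s}(\alpha_{ts}^{-1}(x))=\alpha_{t}^{-1}(x)=F(t). \]
Combining, $(M\vt\LI)^{\beta}=\pi_{\alpha}(M)$, which completes the proof.

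The argument is essentially a bookkeeping exercise, so I do not anticipate a genuine obstacle. The only subtlety is to handle the direction of translation correctly in the formula for $\mathrm{Ad}\rho_{s}$ on $\LI$: because $\rho_{s}$ is a right translation (twisted by $\Delta_{G}^{1/2}$) rather than a left one, the induced action on multipliers shifts by $s$ on the right, which is precisely what makes the functional equation $F(t)=\alpha_{s}(F(ts))$ collapse to $\pi_{\alpha}$ rather than to some twisted variant. Once this is checked carefully, the identification with $\pi_{\alpha}(M)$ is immediate.
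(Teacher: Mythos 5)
The paper does not actually prove this lemma: it is listed among the ``well known facts'' quoted from \cite{NaTa}, \cite{Stra}, \cite{SVZ1}, \cite{SVZ2}, \cite{Tak}, so there is no in-text argument to compare against. Your strategy is the standard one from those references, and almost all of it checks out: the reduction $(M\vt\LI)^{\pi_{\beta}}=(M\vt\LI)^{\beta}$ via the remark after Theorem \ref{thm1.1} is correct (and you do not even need the $\beta$-invariance of $M\vt\LI$ for it, since both sides are just the intersections of the corresponding fixed-point sets in $M\vt\BLT$ with $M\vt\LI$); the formula $\mathrm{Ad}\rho_{s}(f)(t)=f(ts)$ is right (the two factors $\Delta_{G}(s)^{\pm1/2}$ cancel), hence so is $\beta_{s}(F)(t)=\alpha_{s}(F(ts))$ on elementary tensors and, by w*-density, on all of $L^{\infty}(G,M)$; and the verification that $\pi_{\alpha}(M)$ is fixed is a correct two-line computation.

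The one step that does not survive as written is ``setting $t=e$.'' An element $F$ of $L^{\infty}(G,M)\cong M\vt\LI$ is only an equivalence class modulo null sets, and the fixed-point identity $\alpha_{s}(F(ts))=F(t)$ holds, for each fixed $s$, only for almost every $t$; evaluation at the single point $e$ is therefore not defined. The standard repair: by Fubini the exceptional set is null in $G\times G$, so there exists $t_{0}$ with $\alpha_{s}(F(t_{0}s))=F(t_{0})$ for almost every $s$; substituting $u=t_{0}s$ gives
\[F(u)=\alpha_{u^{-1}t_{0}}(F(t_{0}))=\alpha_{u}^{-1}\bigl(\alpha_{t_{0}}(F(t_{0}))\bigr)=\pi_{\alpha}(x)(u),\qquad x:=\alpha_{t_{0}}(F(t_{0})),\]
for almost every $u$, so $F=\pi_{\alpha}(x)$. (Alternatively one can mollify: averaging $\beta_{s}(F)$ against an approximate identity in $\LO$ reproduces $F$ and simultaneously produces a w*-continuous representative, after which evaluation at $e$ is legitimate.) With that substitution your argument is complete; everything else, including the sign/direction bookkeeping you flagged as the main subtlety, is handled correctly.
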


\begin{pro}\label{pro3.2} 
	For the dual action $ \wh{\alpha} $ of $ \alpha $ we have:
	 \[ (M\rtimes_{\alpha}G)^{\widehat{\alpha}}=\pi_{\alpha}(M). \]		
\end{pro}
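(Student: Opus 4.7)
The plan is to establish the two inclusions separately, using the concrete formula $\widehat{\alpha}=(\mathrm{id}_{M}\otimes\delta_{G})|_{M\rtimes_{\alpha}G}$ from the preceding proposition together with the Digernes--Takesaki description of the crossed product and Lemma \ref{lem3.2}.

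For the inclusion $\pi_{\alpha}(M)\subseteq(M\rtimes_{\alpha}G)^{\widehat{\alpha}}$, I would first note that $\pi_{\alpha}(M)\sub M\vt L^{\infty}(G)$ by the very definition of $\pi_{\alpha}$. Then I would check that every $f\in L^{\infty}(G)$ satisfies $\delta_{G}(f)=f\otimes1$, equivalently that $f\otimes1$ commutes with $W_{G}$; this is a one-line computation from $(W_{G}\xi)(s,t)=\xi(s,st)$ with $f$ acting as multiplication. Consequently $\mathrm{id}_{M}\otimes\delta_{G}$ fixes every element of $M\vt L^{\infty}(G)$, and in particular fixes $\pi_{\alpha}(x)$ for all $x\in M$.

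For the reverse inclusion, take $x\in(M\rtimes_{\alpha}G)^{\widehat{\alpha}}$, so $(\mathrm{id}_{M}\otimes\delta_{G})(x)=x\otimes 1$. The key fact I would invoke is the classical identity
\[\{y\in B(L^{2}(G)):\delta_{G}(y)=y\otimes 1\}=L^{\infty}(G),\]
which, after tensoring with $M$ via the Fubini theorem for fixed points, gives $(M\vt\BLT)^{\mathrm{id}_{M}\otimes\delta_{G}}=M\vt L^{\infty}(G)$. Hence $x\in M\vt L^{\infty}(G)$. On the other hand $x\in M\rtimes_{\alpha}G$, and by the Digernes--Takesaki characterization recalled in the introduction, $M\rtimes_{\alpha}G=(M\vt\BLT)^{\beta}$, where $\beta=\alpha\otimes\mathrm{Ad}\rho$. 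Therefore $x$ is $\beta$-invariant and lies in $M\vt L^{\infty}(G)$; applying Lemma \ref{lem3.2} yields $x\in(M\vt L^{\infty}(G))^{\beta}=\pi_{\alpha}(M)$, as required.

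The only non-routine ingredient is the identification of the fixed points of $\delta_{G}$ on $B(L^{2}(G))$ with $L^{\infty}(G)$; this is where the main structural content sits, but it is standard (it encodes that $W_{G}$ is the fundamental unitary of $G$, and $L^{\infty}(G)$ is the associated ``left leg''). Everything else is formal manipulation with the covariance relations and the results already quoted in the excerpt.
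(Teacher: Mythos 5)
Your argument is correct: the forward inclusion via $\delta_{G}(f)=f\otimes 1$ for $f\in L^{\infty}(G)$, and the reverse inclusion via $\BLT^{\delta_{G}}=L^{\infty}(G)$, the Digernes--Takesaki characterization, and Lemma \ref{lem3.2}, all go through. The paper quotes this proposition from the literature without proof, but your route is exactly the specialization to von Neumann algebras of the paper's own proof of the more general Proposition \ref{pro2.15}, so this is essentially the intended argument.
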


\begin{thm}[Digernes-Takesaki]\label{thm3.2} 
	The crossed product $ M\rtimes_{\alpha}G $ is the fixed point space of $ \beta $, i.e.
	\[M\rtimes_{\alpha}G=(M\vt B(L^{2}(G)))^{\beta}=(M\vt B(L^{2}(G)))^{\pi_{\beta}}. \]	
\end{thm}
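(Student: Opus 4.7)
The plan is to handle the second equality first: it is immediate from the general equivalence $M^{\pi_\alpha}=M^\alpha$ noted after Theorem \ref{thm1.1}, applied with $\beta$ in place of $\alpha$ and $M\vt\BLT$ in place of $M$. So the substantive content lies in proving $M\rtimes_{\alpha}G=(M\vt\BLT)^{\beta}$.

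\textbf{Easy inclusion.} To see $M\rtimes_{\alpha}G\sub(M\vt\BLT)^{\beta}$, I would check $\beta_{s}$-invariance on the two generating sets $\pi_{\alpha}(M)$ and $\mathbb{C}1\vt L(G)$. For $\pi_{\alpha}(x)$, viewed as the $M$-valued function $t\mapsto\alpha_{t}^{-1}(x)$, the action $\beta_{s}=\alpha_{s}\ot\mathrm{Ad}\rho_{s}$ produces the function $t\mapsto\alpha_{s}(\alpha_{ts}^{-1}(x))=\alpha_{t}^{-1}(x)$, using that $\mathrm{Ad}\rho_{s}$ on $\LI$ is right translation by $s$; for $1\ot\lambda_{t}$, $\rho_{s}\lambda_{t}\rho_{s}^{*}=\lambda_{t}$ since the left and right regular representations commute.

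\textbf{Hard inclusion.} Set $N:=(M\vt\BLT)^{\pi_{\beta}}$. By Proposition \ref{pro4.1}, $\pi_{\beta}$ commutes with the $L(G)$-action $\mathrm{id}_{M}\ot\delta_{G}$ on $M\vt\BLT$, so Lemma \ref{lem2.3} says $\gamma:=(\mathrm{id}_{M}\ot\delta_{G})|_{N}$ is a W*-$L(G)$-action on $N$. Next I would identify $N^{\gamma}$: a direct computation shows $\BLT^{\delta_{G}}=\LI$ (the inclusion $\supseteq$ follows from $W_{G}^{*}(f\ot 1)W_{G}=f\ot 1$ for $f\in\LI$, while $\sub$ comes from the standard commutant argument for $W_{G}$), hence $(M\vt\BLT)^{\mathrm{id}_{M}\ot\delta_{G}}=M\vt\LI$, giving
\[N^{\gamma}=N\cap(M\vt\LI)=(M\vt\LI)^{\pi_{\beta}}=(M\vt\LI)^{\beta}=\pi_{\alpha}(M)\]
by Lemma \ref{lem3.2}. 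Thus $N$ and $M\rtimes_{\alpha}G$ are both W*-$L(G)$-comodules inside $M\vt\BLT$ with the same fixed-point subalgebra $\pi_{\alpha}(M)$ (using Proposition \ref{pro3.2} for the crossed product), and the easy inclusion yields $M\rtimes_{\alpha}G\sub N$.

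\textbf{Main obstacle.} The remaining task, $N\sub M\rtimes_{\alpha}G$, is the crux. The natural strategy is to show that $N$ is w*-generated by $N^{\gamma}\cup(1\ot L(G))$, both of which lie in $M\rtimes_{\alpha}G$. I would implement this through slice maps: for each $\omega\in A(G)\simeq L(G)_{*}$, the map $T_{\omega}:=(\mathrm{id}_{M\vt\BLT}\ot\omega)\circ\gamma$ preserves $N$, and for a net of states $\omega_{i}\to\varepsilon$ (evaluation at the identity) we should have $T_{\omega_{i}}(x)\to x$ in the w*-topology for $x\in N$. Choosing $\omega$ to be a vector functional $\la\lambda_{\cdot}\xi,\eta\ra$ and expanding $\gamma(x)=\mathrm{Ad}(1\ot 1\ot W_{G}^{*})(x\ot 1)$, one can rewrite $T_{\omega}(x)$ as a combination of a slice of $x$ by a vector functional on $\BLT$ times an element of $L(G)$, and the slice, being an element of $M\vt\BLT$ fixed by $\pi_{\beta}$ in the last variable, should further project to $\pi_{\alpha}(M)$. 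Making this approximation argument precise is the technical heart of the theorem; a cleaner alternative is to invoke Takesaki's bidual theorem, which identifies $M\vt\BLT\simeq(M\rtimes_{\alpha}G)\rtimes_{\wh{\alpha}}G$ and matches the fixed points of $\beta$ on the left with $M\rtimes_{\alpha}G$ on the right, thereby closing the argument without direct averaging.
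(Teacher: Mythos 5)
The paper does not actually prove this statement: Theorem \ref{thm3.2} is quoted in the preliminaries as a classical fact, with a pointer to the literature (e.g. \cite[Chapter X, \S 1, Corollary 1.22]{Tak}), so there is no internal proof to compare against. Judged on its own, your argument is correct and complete for the second equality and for the inclusion $M\rtimes_{\alpha}G\sub(M\vt\BLT)^{\beta}$: the computation of $\beta_{s}$ on the two generating sets is right, and since each $\beta_{s}$ is a w*-continuous $*$-automorphism the fixed points form a von Neumann algebra, so containing the generators suffices. The identification $N^{\gamma}=N\cap(M\vt\LI)=\pi_{\alpha}(M)$ via Lemma \ref{lem3.2} is also fine.

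The gap is in the converse inclusion, which you flag as the crux but do not close. The step ``for a net of states $\omega_{i}\to\varepsilon$ we should have $T_{\omega_{i}}(x)\to x$ for $x\in N$'' is precisely the assertion that the $L(G)$-comodule $(N,\gamma)$ is non-degenerate, i.e. that $N=\wsp\{A(G)\cdot N\}$ --- and that is the entire nontrivial content of the theorem. Indeed, the paper's Theorem \ref{mainthm} shows that for general dual operator spaces this non-degeneracy is \emph{equivalent} to the Digernes--Takesaki-type equality, and Remark \ref{rem7} exhibits comodules where it fails; what rescues the von Neumann algebra case is the special structure of W*-coactions on von Neumann algebras, encapsulated in \cite[Lemma II.1.4 and Corollary II.1.5]{SVZ2} (Proposition \ref{cor6.1} of the paper), which is exactly the Fej\'{e}r-type mechanism your sketch would need. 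Note also that evaluation at the identity is not a normal functional on $L(G)$ for non-discrete $G$, so the averaging cannot be run as a naive weak-* limit of states on $L(G)$; one must route it through that result. Your fallback --- invoking Takesaki's bidual theorem and matching $\beta$ with the bidual action --- is a legitimate standard route (essentially the one taken in \cite{Tak}), but as written it is an appeal to a theorem at least as deep as the one being proved, so the proposal does not yet constitute a self-contained proof.
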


\section{Crossed products of dual operator spaces and main results}

  In this section, we prove some basic facts about crossed products of dual operator spaces which are generalizations of known results for crossed products of von Neumann algebras and we characterize those $ \LI $-comodules for which the Fubini crossed product coincides with the spatial crossed product (see Theorem \ref{mainthm}).

\begin{defin}[M. Hamana, \cite{Ha1}] 
	For an $ \LI $-comodule $ (X,\alpha) $, we define the map \[ \widetilde{\alpha} \colon X \vt B(L^{2}(G)) \to X \vt B(L^{2}(G)) \vt L^{\ap}(G) \] by
	\[\widetilde{\alpha}=(\mathrm{id }_{X}\otimes \mathrm{Ad }U_{G}^{*})\circ(\mathrm{id }_{X}\otimes\sigma)\circ(\alpha\otimes \mathrm{id }_{B(L^{2}(G))}) \]
	where $ \sigma $ is the flip mapping on $\BLT\vt\BLT$.
\end{defin}

The proof of the next result is essentially the same as the proof of Lemma 5.3 (i) in \cite{Ha1} and so we omit it. 
\begin{pro}\label{pro4.2} 
	Let $ (X,\alpha) $ be an $ \LI $-comodule. Then, $ \widetilde{\alpha} $ is an $ \LI $-action on $ X \vt B(L^{2}(G)) $, which commutes with the $ L(G) $-action $ \mathrm{id }_{X}\otimes\delta_{G} $.
\end{pro}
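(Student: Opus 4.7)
The plan is a direct verification parallel to Hamana's Lemma 5.3(i) in \cite{Ha1}, carried out in three steps: (a) that $\widetilde{\alpha}$ is a w*-continuous complete isometry into $X\vt\BLT\vt\LI$; (b) that it satisfies the coassociativity condition $(\widetilde{\alpha}\otimes \mathrm{id}_{\LI})\circ\widetilde{\alpha}=(\mathrm{id}_{X\vt\BLT}\otimes\alpha_G)\circ\widetilde{\alpha}$; and (c) that it commutes with $\mathrm{id}_X\otimes\delta_G$ in the sense of Definition \ref{def2.3}.

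Step (a) is immediate: each of the three constituents in the definition of $\widetilde{\alpha}$ is a w*-continuous complete isometry, and the image lands in $X\vt\BLT\vt\LI$ (rather than merely $X\vt\BLT\vt\BLT$) because $U_G\in R(G)\vt\LI$ and $\LI$ is abelian, so conjugation by $U_G^{*}$ preserves $\BLT\vt\LI$.

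Step (b) is the main technical content. Unwinding both sides on an element $x_{0}\otimes T\in X\vt\BLT$ and using that $\alpha$ is an $\LI$-action, the $X$-leg contribution reduces to the coassociativity of $\alpha$, namely $(\alpha\otimes \mathrm{id}_{\LI})\circ\alpha=(\mathrm{id}_{X}\otimes\alpha_G)\circ\alpha$. What remains is an ambient identity living entirely in $\BLT\vt\BLT\vt\LI$ and involving only $U_G$, $\sigma$ and $\alpha_G$. That ambient identity is exactly the pentagon relation for $U_G$ combined with $\alpha_G=\sigma\circ\alpha'_G$ and $\alpha'_G(f)=U_G(f\otimes 1)U_G^{*}$ on $\LI$; equivalently, it is the computation that shows $\pi_\beta$ is an $\LI$-action in Proposition \ref{pro4.1}, performed directly here rather than via the group-action shortcut of Theorem \ref{thm1.1}. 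The main obstacle I expect is the bookkeeping of tensor legs: the two flips and the copies of $U_G$ and $U_G^{*}$ appearing on the two sides must be matched, and this matching is precisely what the pentagon relation for $U_G$ provides.

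Step (c) reduces, after observing that on the $X$-leg both sides act only through $\alpha$, to an identity on the two $\BLT$-legs asserting that $\mathrm{Ad}\,U_G^{*}\circ\sigma$ commutes in the comodule sense with $\delta_G$. This follows from $U_G\in R(G)\vt\LI$, $W_G\in\LI\vt L(G)$, and the mutual commutation of $R(G)$ and $L(G)$ inside $\BLT$. It is the same commutation appearing in Proposition \ref{pro4.1}, extracted here without reference to a group action.
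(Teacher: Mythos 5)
The paper omits the proof of this proposition entirely, deferring to Hamana's Lemma 5.3(i), and your outline is exactly the direct verification that reference performs: reducing coassociativity of $\widetilde{\alpha}$ to coassociativity of $\alpha$ plus the pentagon-type identity for $U_G$ (equivalently, coassociativity of $\alpha'_G$), and reducing the commutation with $\mathrm{id}_X\otimes\delta_G$ to $U_G\in R(G)\vt\LI$, $W_G\in\LI\vt L(G)$ and $R(G)=L(G)'$. Your plan is correct and takes the same route the paper (implicitly) relies on.
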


\begin{defin}[M. Hamana, \cite{Ha1}]\label{def4.2}
	Let $ (X,\alpha) $ be an $ \LI $-comodule. The \bfc{Fubini crossed product} of $ X $ by $ \alpha $ is defined to be the $ L(G) $-comodule $ (X\rtimes^{\cl{F}}_{\alpha}G,\ \widehat{\alpha}) $, where
	\[X\rtimes^{\cl{F}}_{\alpha}G:=(X\vt B(L^{2}(G)))^{\widetilde{\alpha}} \]
	and \[\widehat{\alpha}:=(\mathrm{id }_{X}\otimes\delta_{G})|_{X\rtimes_{\alpha}G}. \]
	The $ L(G) $-action $ \widehat{\alpha}\colon X\fcr_{\alpha}G\to (X\fcr_{\alpha}G)\ft L(G) $ is called the \bfc{dual} of $ \alpha $. 
\end{defin} 

\begin{remark}\label{rem4}
It follows immediately from Proposition \ref{pro4.2} and Lemma \ref{lem2.3}, that the dual action $\widehat{\alpha}$ is indeed an $ L(G) $-action on the Fubini crossed product $ X\rtimes^{\cl{F}}_{\alpha}G $, since $ \mathrm{id }_{X}\otimes\delta_{G} $ commutes with $ \wt{\alpha} $.
\end{remark}

\begin{defin}\label{def2.11} Let $ (X,\alpha) $ be an $ \LI $-comodule and suppose that $ X $ is w*-closed in $ B(H) $, for some Hilbert space $ H $. The \bfc{spatial crossed product} of $ X $ by $ \alpha $ is defined to be the space
	\[X\scr_{\alpha}G:=\wsp\{(1_{H}\ot\lambda_{s})\alpha(x):\ s\in G,\ x\in X \}\sub B(H)\vt\BLT. \] 	
\end{defin}

\begin{remark}\label{rem7}
	It is clear, from the Digernes-Takesaki theorem (see Theorem \ref{thm3.2} above), Theorem \ref{thm1.1} and Proposition \ref{pro4.1}, that $ X\fcr_{\alpha}G=X\scr_{\alpha}G $ when $ X $ is a von Neumann algebra and $ \alpha $ is in addition a unital normal *-homomorphism.
	
	However, this is not true for general $ \LI $-comodules. For example, take any discrete group failing the approximation property (see section \ref{sec4}), e.g. $ G=SL(3,\mathbb{Z}) $. Then, $ L(G) $ does not have the dual slice map property (see \cite[Theorem 2.1]{HK}), which means that there exists a dual operator space $ X $, such that $ X\vt L(G)\subsetneqq X\ft L(G) $.
	Consider the trivial $ \LI $-action $ \alpha\colon X\to X\vt \LI,\ \alpha(x)=x\ot1 $, for any $ x\in X $. Then, obviously, we have
	\[X\scr_{\alpha}G=X\vt L(G). \]
	On the other hand, it is not hard to see that, if $ \alpha $ is trivial, then \[X\fcr_{\alpha}G= X\ft L(G) \]
	and therefore $ X\fcr_{\alpha}G\neq X\scr_{\alpha}G $. 
\end{remark}

\begin{remark}\label{rem5} Let $ H,\ K $ be Hilbert spaces, $ X\sub B(H) $ a w*-closed subspace and $ b,\ c\in B(K) $. Then, we have
	\[(1_{H}\ot b)(X\vt B(K))(1_{H}\otimes c)\sub X\vt B(K). \]	
Indeed, for all $ a,\ b,\ c\in B(K) $ and $ v\in X $, we have $ (1_{H}\ot b)(v\ot a)(1_{H}\otimes c)=v\ot (bac)\in X\vt B(K) $, thus the above inclusion follows from the definition of the spatial tensor product and the fact that the multiplication in $ B(H)\vt B(K) $ is separately w*-continuous. As a consequence, if $ (X,\alpha) $ is an $ \LI $-comodule, then $ X\scr_{\alpha}G\sub X\vt \BLT $, because $ \alpha(X)\sub X\vt \LI\sub X\vt\BLT $. 

Also, if in addition $ Y $ is a w*-closed subspace of $ B(L) $ for some Hilbert space $ L $ and $ \phi\colon X\to Y $ is a w*-continuous completely bounded map, then $ \phi\ot \mathrm{id }_{B(K)}  \colon$ $ X \vt B(K) \to Y \vt B(K) $ is a w*-continuous $ B(K) $-bimodule map in the sense that
\[(\phi\ot \mathrm{id }_{B(K)})((1_{H}\ot a)x(1_{H}\ot b))=(1_{L}\ot a)(\phi\ot \mathrm{id }_{B(K)})(x)(1_{L}\ot b), \]
for all $ a,\ b\in B(K) $ and $ x\in X\vt B(K) $.\\
\end{remark}

\begin{pro}\label{pro2.13} Let $ (X,\alpha) $ be an $ \LI $-comodule and suppose that $ X $ is w*-closed in $ B(H) $, for some Hilbert space $ H $. Then, $ X\fcr_{\alpha}G $ is an $ L(G) $-bimodule, i.e.
	\[(1_{H}\ot\lambda_{s})y(1_{H}\ot\lambda_{t})\in X\fcr_{\alpha}G,\qquad s,t\in G,\ y\in X\fcr_{\alpha}G \]
	and $ \alpha(X)\sub X\fcr_{\alpha}G $. Therefore, we have: 
	\[X\scr_{\alpha}G\sub X\fcr_{\alpha}G. \]
	Furthermore, $ \wh{\alpha}(X\scr_{\alpha}G)\sub (X\scr_{\alpha}G)\ft L(G) $, that is $ X\scr_{\alpha}G $ is an $ L(G) $-subcomodule of $ (X\fcr_{\alpha}G,\wh{\alpha}) $.	
\end{pro}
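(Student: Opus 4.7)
The plan is to work directly from the fixed-point description $X\fcr_\alpha G=(X\vt\BLT)^{\wt{\alpha}}$ and verify $\wt{\alpha}(z)=z\otimes 1$ on the relevant elements $z$. The essential structural facts needed are the coaction axiom for $\alpha$; the identities $\alpha_G=\sigma\circ\alpha'_G$ and $\alpha'_G(f)=U_G(f\otimes 1)U_G^{*}$; and the commutation $[\lambda_u\otimes 1,\,U_G]=0$ for every $u\in G$. The last holds because $U_G\in R(G)\vt\LI$ while $\lambda_u\otimes 1\in L(G)\otimes\mathbb{C}\subseteq R(G)'\vt\LI=(R(G)\vt\LI)'$, using that $L(G)\subseteq R(G)'$ together with $\LI$ being maximal abelian.

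For the $L(G)$-bimodule property, Remark \ref{rem5} first gives that $(1_H\otimes\lambda_s)y(1_H\otimes\lambda_t)$ stays in $X\vt\BLT$. Applying the three ingredients of $\wt{\alpha}$ in sequence — the $\BLT$-bimodule property of $\alpha\otimes\mathrm{id}_{\BLT}$ (Remark \ref{rem5}), then the flip $\mathrm{id}_X\otimes\sigma$, then conjugation by $1_H\otimes U_G^{*}$ — and invoking the commutation to neutralize the latter on the outer factors $\lambda_s\otimes 1$ and $\lambda_t\otimes 1$, I obtain
\[\wt{\alpha}\bigl((1_H\otimes\lambda_s)\,y\,(1_H\otimes\lambda_t)\bigr)=(1_H\otimes\lambda_s\otimes 1)\,\wt{\alpha}(y)\,(1_H\otimes\lambda_t\otimes 1).\]
Specializing to $y\in X\fcr_\alpha G$ collapses the right-hand side to $(1_H\otimes\lambda_s)\,y\,(1_H\otimes\lambda_t)\otimes 1$, proving the bimodule claim. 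For the inclusion $\alpha(X)\subseteq X\fcr_\alpha G$, I compute $\wt{\alpha}(\alpha(x))$ directly: coassociativity rewrites $(\alpha\otimes\mathrm{id})\circ\alpha$ as $(\mathrm{id}_X\otimes\alpha_G)\circ\alpha$; the flip turns $\alpha_G$ into $\alpha'_G$; and conjugation by $1_H\otimes U_G^{*}$ cancels the $U_G$ inside $\alpha'_G$, yielding $\wt{\alpha}(\alpha(x))=\alpha(x)\otimes 1$. Combined with w*-closedness of the fixed-point set, these two facts yield $X\scr_\alpha G\subseteq X\fcr_\alpha G$.

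For the final subcomodule claim, w*-continuity of $\wh{\alpha}=(\mathrm{id}_X\otimes\delta_G)|_{X\fcr_\alpha G}$ reduces the verification to the generators $(1_H\otimes\lambda_s)\alpha(x)$. Since $\mathrm{id}_{B(H)}\otimes\delta_G$ is a $*$-homomorphism, with $\delta_G(\lambda_s)=\lambda_s\otimes\lambda_s$ and $\delta_G(f)=f\otimes 1$ for $f\in\LI$ (a direct computation from $\delta_G(\cdot)=W_G^{*}(\cdot\otimes 1)W_G$ and the formula for $W_G$), one obtains
\[\wh{\alpha}\bigl((1_H\otimes\lambda_s)\alpha(x)\bigr)=\bigl((1_H\otimes\lambda_s)\alpha(x)\bigr)\otimes\lambda_s,\]
which lies in $(X\scr_\alpha G)\vt L(G)\subseteq(X\scr_\alpha G)\ft L(G)$. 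The main obstacle throughout is the bookkeeping across three tensor factors and two flips in the definition of $\wt{\alpha}$; once the commutation $[\lambda_u\otimes 1,U_G]=0$ and the relation $\alpha_G=\sigma\circ\alpha'_G$ are in hand, everything reduces to direct substitution.
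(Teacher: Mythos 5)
Your proposal is correct and follows essentially the same route as the paper: verify $\wt{\alpha}(z)=z\otimes 1$ directly on $(1_H\otimes\lambda_s)y(1_H\otimes\lambda_t)$ and on $\alpha(x)$ using multiplicativity of $(\mathrm{id}\otimes\mathrm{Ad}\,U_G^{*})\circ(\mathrm{id}\otimes\sigma)$, the commutation of $\lambda_s\otimes 1$ with $U_G\in R(G)\vt\LI$, and the identities $\alpha_G=\sigma\circ\alpha'_G$, $\alpha'_G(f)=U_G(f\otimes 1)U_G^{*}$, then check $\wh{\alpha}$ on the generators $(1_H\otimes\lambda_s)\alpha(x)$ exactly as the paper does. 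No gaps.
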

\begin{proof} Let $ s\in G $ and $ y\in  X\fcr_{\alpha}G  $. Then, by Remark \ref{rem5} we have that $ (1_{H}\ot\lambda_{s})y\in X\vt \BLT $ and $ \wt{\alpha}(y)=y\ot 1 $, by Definition \ref{def4.2}. Also, by Remark \ref{rem5}, we have that
	\[(\alpha\ot \mathrm{id }_{\BLT})((1_{H}\ot\lambda_{s})y)=(1_{H}\ot1_{\LT}\ot\lambda_{s})(\alpha\ot \mathrm{id }_{\BLT})(y). \]
Thus, we have:
	\begin{align*} 
	   \wt{\alpha}((1_{H}\ot\lambda_{s})y)&=(\mathrm{id }_{X}\otimes \mathrm{Ad }U_{G}^{*})\circ(\mathrm{id }_{X}\otimes\sigma)\circ(\alpha\otimes \mathrm{id }_{B(L^{2}(G))})((1_{H}\ot\lambda_{s})y)\\
	   &=(\mathrm{id }_{X}\otimes \mathrm{Ad }U_{G}^{*})\circ(\mathrm{id }_{X}\otimes\sigma)\left((1_{H}\ot1_{\LT}\ot\lambda_{s})(\alpha\ot \mathrm{id }_{\BLT})(y) \right)\\
	   &=\left[ (\mathrm{id }_{B(H)}\otimes \mathrm{Ad }U_{G}^{*})\circ(\mathrm{id }_{B(H)}\otimes\sigma)((1_{H}\ot1_{\LT}\ot\lambda_{s}))\right] \wt{\alpha}(y)\\
	   &=\left[(1_{H}\ot U_{G}^{*})(1_{H}\ot\lambda_{s}\ot 1_{\LT})(1_{H}\ot U_{G}) \right](y\ot1_{\LT})\\
	   &=(1_{H}\ot\lambda_{s})y\ot1_{\LT}, 
	\end{align*}
where the third equality above follows from the fact that $ (\mathrm{id }_{B(H)}\otimes \mathrm{Ad }U_{G}^{*})\circ(\mathrm{id }_{B(H)}\otimes\sigma) $ is a *-homomorphism and thus multiplicative, while the last equality is because $ U_{G}\in R(G)\vt\LI $ and $ R(G)=L(G)' $. Therefore, $ (1_{H}\ot\lambda_{s})y\in X\fcr_{\alpha}G$. Similarly, we get $ y(1_{H}\ot\lambda_{t})\in X\fcr_{\alpha}G $ for all $ t\in G  $ and $ y\in X\fcr_{\alpha}G $.\\
On the other hand, if $ x\in X $, then:
    \begin{align*} 
       \wt{\alpha}(\alpha(x))&=(\mathrm{id }_{X}\otimes \mathrm{Ad }U_{G}^{*})\circ(\mathrm{id }_{X}\otimes\sigma)\circ(\alpha\otimes \mathrm{id }_{B(L^{2}(G))})(\alpha(x))\\
       &=(\mathrm{id }_{X}\otimes \mathrm{Ad }U_{G}^{*})\circ(\mathrm{id }_{X}\otimes\sigma)\circ(\mathrm{id }_{X}\ot\alpha_{G})(\alpha(x))\\
       &=(\mathrm{id }_{X}\otimes \mathrm{Ad }U_{G}^{*})\circ(\mathrm{id }_{X}\ot\alpha'_{G})(\alpha(x))\\
       &=(1_{H}\ot U_{G}^{*})(1_{H}\ot U_{G})(\alpha(x)\ot 1_{\LT})(1_{H}\ot U_{G}^{*})(1_{H}\ot U_{G})\\
       &=\alpha(x)\ot 1_{\LT},
    \end{align*}
because $ \alpha'_{G}=\sigma\circ\alpha_{G} $ and $ \alpha'_{G}(f)=U_{G}(f\ot1)U_{G}^{*} $, for all $ f\in \LI $. Hence, $ \alpha(X)\sub X\fcr_{\alpha}G $.\\
Finally, for $ x\in X $ and $ s\in G $, we have:
    \begin{align*} 
        \wh{\alpha}((1_{H}\ot\lambda_{s})\alpha(x))&=(\mathrm{id }_{B(H)}\ot\delta_{G})((1_{H}\ot\lambda_{s})\alpha(x))\\
        &=(\mathrm{id }_{B(H)}\ot\delta_{G})(1_{H}\ot\lambda_{s}))(\mathrm{id }_{B(H)}\ot\delta_{G})(\alpha(x))\\
        &=(1_{H}\ot\delta_{G}(\lambda_{s}))(1_{H}\ot W_{G}^{*})(\alpha(x)\ot1_{\LT})(1_{H}\ot W_{G})\\
        &=(1_{H}\ot\lambda_{s}\ot\lambda_{s})(\alpha(x)\ot1_{\LT}),  
    \end{align*}
because $ 1_{H}\ot W_{G}\in \CI\vt\LI\vt L(G) $ commutes with $ \alpha(x)\ot1_{\LT}\in B(H)\vt$ $\LI\vt\mathbb{C}1_{\LT} $. Therefore, we get:
\[ \wh{\alpha}((1_{H}\ot\lambda_{s})\alpha(x))=((1_{H}\ot\lambda_{s})\alpha(x))\ot\lambda_{s} \]
and it follows that $\wh{\alpha}(X\scr_{\alpha}G)\sub (X\scr_{\alpha}G)\ft L(G)  $.
   	
\end{proof}

The next result proves that, for any $ \LI $-comodule $ X $, both the Fubini crossed product and the spatial crossed product are independent of the Hilbert space on which $ X $ is represented.  	

\begin{pro}[Uniqueness of the crossed product]\label{pro2.14} Let $ (X,\alpha) $ and $ (Y,\beta) $ be two $ \LI $-comodules and suppose that $ X $ and $ Y $ are w*-closed subspaces of $ B(H) $ and $ B(K) $ respectively. If there exists an $ \LI $-comodule isomorphism $ \Phi\colon X\to Y $, then the isomorphism $\Psi:= \Phi\ot \mathrm{id }_{\BLT}\colon X\vt\BLT\to Y\vt\BLT $ is an $ \LI $-comodule isomorphism from $ (X\vt\BLT,\wt{\alpha}) $ onto $ (Y\vt\BLT,\wt{\beta}) $, which maps $ X\fcr_{\alpha}G $ onto $ Y\fcr_{\beta}G $ and $ X\scr_{\alpha}G $ onto $ Y\scr_{\beta}G $. Also, $ \Psi|_{X\fcr_{\alpha}G} $ is an $ L(G) $-comodule isomorphism from $ (X\fcr_{\alpha}G,\wh{\alpha}) $ onto  $ (Y\fcr_{\beta}G,\wh{\beta})$ and $ \Psi|_{X\scr_{\alpha}G} $ is an $ L(G) $-comodule isomorphism from $ (X\scr_{\alpha}G,\wh{\alpha}) $ onto  $ (Y\scr_{\beta}G,\wh{\beta})$. Furthermore, $ \Psi $ is an $ L(G) $-bimodule map, i.e. $ \Psi((1_{H}\ot \lambda_{s})x(1_{H}\ot\lambda_{t}))=(1_{K}\ot \lambda_{s})\Psi(x)(1_{K}\ot\lambda_{t}) $, for all $ s,t\in G $ and $ x\in X\vt\BLT $.
\end{pro}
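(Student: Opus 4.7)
The plan is to reduce all six assertions to a single intertwining identity $\wt{\beta}\circ\Psi=(\Psi\ot\mathrm{id}_{\LI})\circ\wt{\alpha}$, which expresses that $\Psi$ is a morphism from $(X\vt\BLT,\wt{\alpha})$ to $(Y\vt\BLT,\wt{\beta})$. First I would note that $\Psi=\Phi\ot\mathrm{id}_{\BLT}$ is well-defined as a w*-continuous complete isometry onto $Y\vt\BLT$, with inverse $\Phi^{-1}\ot\mathrm{id}_{\BLT}$; this is a standard property of the spatial tensor product of a w*-continuous completely isometric isomorphism with the identity on $\BLT$.

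For the intertwining, I would start from the comodule identity $\beta\circ\Phi=(\Phi\ot\mathrm{id}_{\LI})\circ\alpha$ and tensor both sides with $\mathrm{id}_{\BLT}$ to get $(\beta\ot\mathrm{id}_{\BLT})\circ\Psi=(\Phi\ot\mathrm{id}_{\LI}\ot\mathrm{id}_{\BLT})\circ(\alpha\ot\mathrm{id}_{\BLT})$. Next I would use that the maps $\mathrm{id}_{Y}\ot\sigma$ and $\mathrm{id}_{Y}\ot\mathrm{Ad}\,U_G^{*}$ act trivially on the first leg, so a check on elementary tensors followed by separate w*-continuity shows they commute with $\Phi\ot\mathrm{id}\ot\mathrm{id}$. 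Composing in turn with these two maps and comparing to the definition of $\wt{\alpha}$ yields the desired intertwining. The same naturality argument applied to $\mathrm{id}\ot\delta_G$ (which now acts only on the last leg, so it commutes with $\Phi\ot\mathrm{id}_{\BLT}$) gives $(\mathrm{id}_{Y}\ot\delta_G)\circ\Psi=(\Psi\ot\mathrm{id}_{L(G)})\circ(\mathrm{id}_{X}\ot\delta_G)$ on all of $X\vt\BLT$; restricting this to the Fubini crossed product will immediately yield $\wh{\beta}\circ\Psi|_{X\fcr_{\alpha}G}=(\Psi|_{X\fcr_{\alpha}G}\ot\mathrm{id}_{L(G)})\circ\wh{\alpha}$.

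With the intertwining in hand, the fixed-point description $X\fcr_{\alpha}G=(X\vt\BLT)^{\wt{\alpha}}$ (and its counterpart for $Y$) immediately yields $\Psi(X\fcr_{\alpha}G)=Y\fcr_{\beta}G$, applying the same argument to $\Phi^{-1}$ for surjectivity. The bimodule identity $\Psi((1_{H}\ot a)z(1_{H}\ot b))=(1_{K}\ot a)\Psi(z)(1_{K}\ot b)$ follows from Remark \ref{rem5} applied to $\Phi$ (check on elementary tensors, extend by separate w*-continuity of multiplication); specialising to $a=\lambda_{s}$, $b=\lambda_{t}$ gives the stated $L(G)$-bimodule property. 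Combining this with $\Psi(\alpha(x))=\beta(\Phi(x))$ (the restriction of the intertwining to $X$, valid since $\alpha(X)\sub X\vt\LI\sub X\vt\BLT$ and $\Psi|_{X\vt\LI}=\Phi\ot\mathrm{id}_{\LI}$) produces $\Psi((1_{H}\ot\lambda_{s})\alpha(x))=(1_{K}\ot\lambda_{s})\beta(\Phi(x))$, so taking the w*-closed span and invoking surjectivity of $\Phi$ gives $\Psi(X\scr_{\alpha}G)=Y\scr_{\beta}G$. The main obstacle is essentially the bookkeeping of tensor legs in the intertwining step; once one is satisfied that $\Phi$ passes through $\sigma$ and $\mathrm{Ad}\,U_G^{*}$ because these act on factors disjoint from the first leg, the remaining assertions are formal consequences of the fixed-point and generating-set descriptions of the two crossed products.
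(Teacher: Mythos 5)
Your proposal is correct and follows essentially the same route as the paper: both establish the single intertwining identity $\wt{\beta}\circ\Psi=(\Psi\ot\mathrm{id})\circ\wt{\alpha}$ by passing $\Phi\ot\mathrm{id}\ot\mathrm{id}$ through $\mathrm{id}\ot\sigma$ and $\mathrm{id}\ot\mathrm{Ad}\,U_G^{*}$, deduce the Fubini statement from the fixed-point description, obtain the spatial statement from $\Psi(\alpha(X))=\beta(Y)$ together with the $L(G)$-bimodule property of Remark \ref{rem5}, and verify the dual-action intertwining by commuting $\Phi\ot\mathrm{id}$ with $\mathrm{id}\ot\delta_G$. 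No gaps.
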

\begin{proof} First, since $ \Phi $ is a comodule morphism we have that $ \beta\circ\Phi=(\Phi\ot id)\circ\alpha $ and hence:
	\begin{align*} 
	   \wt{\beta}\circ\Psi&=(id\ot \mathrm{Ad }U_{G}^{*})\circ(id\ot\sigma)\circ(\beta\ot id)\circ(\Phi\ot id)\\ 
	   &=(id\ot \mathrm{Ad }U_{G}^{*})\circ(id\ot\sigma)\circ((\beta\circ\Phi)\ot id)\\
	   &=(id\ot \mathrm{Ad }U_{G}^{*})\circ(id\ot\sigma)\circ\left[ ((\Phi\ot id)\circ\alpha)\ot id\right] \\
	   &=(id\ot \mathrm{Ad }U_{G}^{*})\circ(id\ot\sigma)\circ(\Phi\ot id\ot id)\circ(\alpha\ot id)\\
	   &=(\Phi\ot id\ot id)\circ(id\ot \mathrm{Ad }U_{G}^{*})\circ(id\ot\sigma)\circ(\alpha\ot id)\\
	   &=(\Psi\ot id)\circ\wt{\alpha},
	\end{align*}
which proves that $ \Psi $ is an $ \LI $-comodule isomorphism from $ (X\vt\BLT,\wt{\alpha}) $ onto $ (Y\vt\BLT,\wt{\beta}) $. This implies that $ \Psi $ maps the fixed point subspace $ X\fcr_{\alpha}G $ of $ \wt{\alpha} $ onto the fixed point subspace $ Y\fcr_{\beta}G $ of $ \wt{\beta} $. On the other hand, the relation $ \beta\circ\Phi=(\Phi\ot id)\circ\alpha $ yields that
\[\Psi(\alpha(X))=(\Phi\ot id)(\alpha(X))=\beta(\Phi(X))=\beta(Y) \]
and since $ \Psi $ is an $ L(G) $-bimodule map (see Remark \ref{rem5}) it follows that $ \Psi $ maps $ X\scr_{\alpha}G $ onto $ Y\scr_{\beta}G $. It remains to show that
\[\wh{\beta}\circ\Psi=(\Psi\ot id)\circ\wh{\alpha}. \]
Indeed:
   \begin{align*} 
      \wh{\beta}\circ\Psi=(\mathrm{id }_{Y}\ot\delta_{G})\circ(\Phi\ot \mathrm{id }_{\BLT})&=(\Phi\ot \mathrm{id }_{\BLT}\ot \mathrm{id }_{L(G)})\circ(\mathrm{id }_{X}\ot\delta_{G})\\
      &=(\Psi\ot \mathrm{id }_{L(G)})\circ\wh{\alpha}.
   \end{align*}	
\end{proof}

The equality of the spatial and Fubini crossed products does not pass to sub-comodules. 
Indeed, any   $\LI $-comodule is isomorphic to a sub-comodule of a von Neumann algebra (see Remark \ref{rem1}), 
and the latter will always satisfy the equality condition.
However, if a sub-comodule is covariantly complemented, 
i.e. if it is the range of a projection commuting with the action, the situation is better. 
In a more general setting, the following proposition gives a sufficient condition so that $ X\fcr_{\alpha}G=X\scr_{\alpha}G $, for an $ \LI $-comodule $ (X,\alpha) $.

\begin{pro}\label{pro3.9} Let $ (X,\alpha) $ and $ (Y,\beta) $ be two $ \LI $-comodules and suppose that $ X $ and $ Y $ are w*-closed subspaces of $ B(H) $ and $ B(K) $ respectively.
	Suppose that $\zeta\colon X\to Y$  is an $ \LI $-comodule monomorphism and there exists an $ \LI $-comodule morphism $ \phi\colon Y\to X $ onto $X$ such that $\phi\circ\zeta=\mathrm{id}_X$. 
	If  $ Y\fcr_{\beta}G=Y\scr_{\beta}G $, then $ X\fcr_{\alpha}G=X\scr_{\alpha}G $.
\end{pro}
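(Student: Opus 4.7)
My approach is to tensor the maps $\zeta$ and $\phi$ by $\mathrm{id}_{\BLT}$ and transport the desired equality from $Y$ to $X$ via the retraction. Set $Z := \zeta\ot \mathrm{id }_{\BLT}\colon X\vt\BLT\to Y\vt\BLT$ and $P := \phi\ot \mathrm{id }_{\BLT}\colon Y\vt\BLT\to X\vt\BLT$. These are w*-continuous $L(G)$-bimodule maps (Remark \ref{rem5}) and satisfy $P\circ Z = \mathrm{id}_{X\vt\BLT}$, since $\phi\circ\zeta = \mathrm{id}_{X}$. The inclusion $X\scr_{\alpha}G\subseteq X\fcr_{\alpha}G$ is already provided by Proposition \ref{pro2.13}, so it suffices to establish the two claims $Z(X\fcr_{\alpha}G)\subseteq Y\fcr_{\beta}G$ and $P(Y\scr_{\beta}G)\subseteq X\scr_{\alpha}G$. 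Together with the hypothesis $Y\fcr_{\beta}G = Y\scr_{\beta}G$, these give, for any $z\in X\fcr_{\alpha}G$,
\[z = P(Z(z))\ \in\ P(Y\fcr_{\beta}G)\ =\ P(Y\scr_{\beta}G)\ \subseteq\ X\scr_{\alpha}G,\]
closing the argument.

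For the first inclusion, I plan to repeat verbatim the intertwining computation in the proof of Proposition \ref{pro2.14}: from $\beta\circ\zeta = (\zeta\ot \mathrm{id})\circ\alpha$ and the definitions of $\wt{\alpha}$, $\wt{\beta}$, one obtains $\wt{\beta}\circ Z = (Z\ot \mathrm{id})\circ\wt{\alpha}$. That five-line calculation only uses that $\zeta$ is an $\LI$-comodule morphism, never its surjectivity, so it applies here. It then follows that if $z\in X\fcr_{\alpha}G$, i.e. $\wt{\alpha}(z) = z\ot 1_{\LT}$, then $\wt{\beta}(Z(z)) = Z(z)\ot 1_{\LT}$, so $Z(z)\in Y\fcr_{\beta}G$.

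For the second inclusion, the comodule identity $\beta\circ\phi = (\phi\ot \mathrm{id })\circ\beta$ (i.e.\ $P\circ\beta = \alpha\circ\phi$) and the $L(G)$-bimodule property of $P$ give, on a typical generator of $Y\scr_{\beta}G$,
\[P\bigl((1_{K}\ot\lambda_{s})\beta(y)\bigr) = (1_{H}\ot\lambda_{s})\alpha(\phi(y))\ \in\ X\scr_{\alpha}G,\]
and then taking w*-closed linear spans, using w*-continuity of $P$, yields $P(Y\scr_{\beta}G)\subseteq X\scr_{\alpha}G$.

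I do not anticipate any real obstacle: the proposition is essentially a functorial corollary of the computations carried out in Proposition \ref{pro2.14}. The one conceptual point worth underlining is that the retraction identity $\phi\circ\zeta = \mathrm{id}_{X}$ is exactly what allows us to recover $z$ from $Z(z)\in Y\scr_{\beta}G$ and thereby conclude $z\in X\scr_{\alpha}G$; without this left inverse the argument collapses, which is consistent with the fact, noted before the proposition, that the equality does not pass to arbitrary subcomodules.
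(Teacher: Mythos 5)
Your proposal is correct and follows essentially the same route as the paper: tensor $\phi$ and $\zeta$ with $\mathrm{id}_{\BLT}$, use the intertwining relation $\wt{\beta}\circ(\zeta\ot\mathrm{id})=((\zeta\ot\mathrm{id})\ot\mathrm{id})\circ\wt{\alpha}$ from the computation in Proposition \ref{pro2.14} to place $(\zeta\ot\mathrm{id})(z)$ in $Y\fcr_{\beta}G$, and then push forward through $\phi\ot\mathrm{id}$ on the generators $(1_{K}\ot\lambda_{s})\beta(y)$ of the spatial crossed product. The only difference is cosmetic: the paper first establishes the full equality $(\phi\ot\mathrm{id})(Y\fcr_{\beta}G)=X\fcr_{\alpha}G$ before invoking the hypothesis, whereas you extract only the two inclusions actually needed.
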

\begin{proof} Let $ \psi:=\phi\ot \mathrm{id }_{\BLT}\colon Y\vt\BLT\to X\vt\BLT $. Then, with the same argument as in the proof of Proposition \ref{pro2.14}, we get that $ \psi $ is an $ \LI $-comodule morphism from $ (Y\vt\BLT,\wt{\beta}) $ to $ (X\vt\BLT,\wt{\alpha}) $ and since $ \phi $ is onto $ X $, it follows that $ \psi $ is onto $ X\vt\BLT $. Similarly, the map $ \theta:=\zeta\ot \mathrm{id }_{\BLT}\colon X\vt\BLT\to Y\vt\BLT $ is an $ \LI $-comodule monomorphism such that $ \psi\circ\theta=\mathrm{id }_{X\vt\BLT} $.
	
	\noindent We will show that $ \psi $ maps $ Y\fcr_{\beta}G $ onto $ X\fcr_{\alpha}G $. Indeed, on the one hand,  since $ \psi $ is an $ \LI $-comodule morphism from  $ (Y\vt\BLT,\wt{\beta}) $ to $ (X\vt\BLT,\wt{\alpha}) $ it maps the fixed point subspace of $ (Y\vt\BLT,\wt{\beta}) $ into the fixed point subspace of $ (X\vt\BLT,\wt{\alpha}) $, that is $ \psi(Y\fcr_{\beta}G)\sub X\fcr_{\alpha}G $. On the other hand, for any $ x\in X\fcr_{\alpha}G $, we have $ x=\psi(y) $, where $ y:=\theta(x)\in Y\vt\BLT $. Thus we get: 
	\begin{align*} 
	   \wt{\beta}(y)&=\wt{\beta}(\theta(x))=(\theta\ot\mathrm{id }_{\LI})(\wt{\alpha}(x))\\
	   &=(\theta\ot\mathrm{id }_{\LI})(x\ot 1)\\&=\theta(x)\ot 1=y\ot1
	   \end{align*}
	and thus $ y\in Y\fcr_{\beta}G $. This yields that $ X\fcr_{\alpha}G\sub\psi(Y\fcr_{\beta}G) $ and therefore we have the equality $ X\fcr_{\alpha}G=\psi(Y\fcr_{\beta}G) $.
	
	\noindent Now if we assume that $ Y\fcr_{\beta}G=Y\scr_{\beta}G $, then we have:
	\begin{align*} 
	X\fcr_{\alpha}G&=\psi\left(Y\fcr_{\beta}G \right)\\
	&=\psi\left(Y\scr_{\beta}G \right)\\
	&\sub\wsp\{\psi\left( (1_{K}\ot\lambda_{s})\beta(y)\right):\ s\in G,\ y\in Y \}\\
	&=\wsp\{(\phi\ot id)\left( (1_{K}\ot\lambda_{s})\beta(y)\right):\ s\in G,\ y\in Y \}\\
	&=\wsp\{(1_{H}\ot\lambda_{s})(\phi\ot id)\left(\beta(y)\right):\ s\in G,\ y\in Y \}\\
	&=\wsp\{(1_{H}\ot\lambda_{s})\alpha(\phi(y)):\ s\in G,\ y\in Y \}\\
	&=\wsp\{(1_{H}\ot\lambda_{s})\alpha(x):\ s\in G,\ x\in X \}\\
	&=X\scr_{\alpha}G 
	\end{align*} 	and therefore $ X\fcr_{\alpha}G=X\scr_{\alpha}G $, because the inclusion $ X\scr_{\alpha}G\sub X\fcr_{\alpha}G $ holds in general (see Proposition \ref{pro2.13}).
	
\end{proof}

\begin{remark}\label{rem6} Let $ Y $ and $ Z $ be two W*-TRO's. A W*-TRO-morphism $ \beta\colon Y\to Z $ is called \bfc{non-degenerate} if the linear spans of $ \beta(Y)Z^{*}Z $ and $ \beta(Y)^{*}ZZ^{*} $ are w*-dense respectively in $ Z $ and $ Z^{*} $. 
	
	\noindent In \cite{SS}, P. Salmi and A. Skalski proved essentially that $ X\fcr_{\alpha}G=X\scr_{\alpha}G $ when $ X $ is a W*-TRO and the action $ \alpha $ is in addition a non-degenerate W*-TRO morphism. One alternative way to prove this result is the following. Consider the linking von Neumann algebra $ R_{X} $ and the canonical w*-continuous projection $ P\colon R_{X}\to X $. Then, the action $ \alpha $ extends uniquely to a W*-$ \LI $-action $ \beta\colon R_{X}\to R_{X}\vt\LI $ since $ \alpha $ is a non-degenerate TRO-morphism (see \cite[Proposition 1.2 and Theorem 2.3]{SS}), such that $ \alpha\circ P=(P\ot id)\circ\beta $. Therefore, $ X\fcr_{\alpha}G=X\scr_{\alpha}G $ follows from Proposition \ref{pro3.9}.
	
\end{remark}

\begin{pro}\label{pro2.15} For any $ \LI $-comodule $ (X,\alpha) $, it holds that \[(X\fcr_{\alpha}G)^{\wh{\alpha}}=(X\scr_{\alpha}G)^{\wh{\alpha}}=\alpha(X). \]	
\end{pro}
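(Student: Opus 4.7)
The plan is to split the equality into the easy inclusions $\alpha(X) \sub (X\scr_\alpha G)^{\wh\alpha} \sub (X\fcr_\alpha G)^{\wh\alpha}$ and the single hard inclusion $(X\fcr_\alpha G)^{\wh\alpha} \sub \alpha(X)$, the latter reduced to the von Neumann algebra case via Proposition \ref{pro3.2}. For the easy part, setting $s = e$ in Definition \ref{def2.11} gives $\alpha(X) \sub X\scr_\alpha G$, Proposition \ref{pro2.13} gives the inclusion $X\scr_\alpha G \sub X\fcr_\alpha G$, and specialising the last display of the proof of Proposition \ref{pro2.13} to $s = e$ yields $\wh\alpha(\alpha(x)) = \alpha(x) \ot 1$ for every $x \in X$.

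For the reverse inclusion $(X\fcr_\alpha G)^{\wh\alpha} \sub \alpha(X)$, by Remark \ref{rem1} the image $Y := \alpha(X)$ is a w*-closed $\LI$-subcomodule of the canonical W*-$\LI$-comodule $(N, \beta) := (B(H) \vt \LI, \mathrm{id}_{B(H)} \ot \alpha_G)$, where $X \sub B(H)$. By Theorem \ref{thm1.1}, $\beta = \pi_\gamma$ for a unique w*-continuous group action $\gamma$ of $G$ on $N$, so Theorem \ref{thm3.2} combined with Proposition \ref{pro4.1} gives $N\fcr_\beta G = N\rtimes_\gamma G$. Proposition \ref{pro2.14}, applied to the comodule isomorphism $\alpha \colon X \to Y$, transports the assertion via the intertwining isomorphism $\Psi = \alpha \ot \mathrm{id}_{\BLT}$: writing $\beta$ also for its restriction to $Y$, it suffices to prove $(Y\fcr_\beta G)^{\wh\beta} \sub \beta(Y)$. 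Since the dual action on $Y\fcr_\beta G \sub N\fcr_\beta G = N\rtimes_\gamma G$ is the restriction of $\wh\beta$, Proposition \ref{pro3.2} then yields
\[(Y\fcr_\beta G)^{\wh\beta} \sub (N\rtimes_\gamma G)^{\wh\beta} = \beta(N),\]
so every such fixed point has the form $\beta(n)$ for a unique $n \in N$.

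It remains to deduce $n \in Y$. On one hand $\beta(n) \in Y\fcr_\beta G \sub Y \vt \BLT$; on the other $\beta(n) \in \beta(N) \sub N \vt \LI$. A slice-map argument---using injectivity of $\LI$ so that $Y \ft \LI = Y \vt \LI$---yields $(Y \vt \BLT) \cap (N \vt \LI) = Y \vt \LI$, hence $\beta(n) \in Y \vt \LI$. Now choose $(f_k) \sub L^1(G)$ with $f_k \geq 0$, $\|f_k\|_1 = 1$ and $\mathrm{supp}(f_k) \to \{e\}$, and view each $f_k$ as a normal functional on $\LI$. The slice
\[(\mathrm{id}_N \ot f_k)(\beta(n)) = \int_G \gamma_s^{-1}(n)\, f_k(s)\, ds\]
lies in $Y$ (since $\beta(n) \in Y \vt \LI$ and $Y$ is w*-closed) and converges weak* to $\gamma_e^{-1}(n) = n$ by the w*-continuity of $s \mapsto \gamma_s^{-1}(n)$. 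Therefore $n \in Y$, so $y = \beta(n) \in \beta(Y)$, which corresponds to $\alpha(X)$ under $\Psi$. The main obstacle is precisely this last step: combining the Fubini/spatial coincidence from injectivity of $\LI$ with an approximate counit at $e$ in $L^1(G)$ to recover $n$ from $\beta(n)$, where the w*-continuity of $\gamma$ supplied by Theorem \ref{thm1.1} is crucial.
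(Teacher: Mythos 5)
Your proof is correct, and its overall strategy---reduce to an ambient W*-$\LI$-comodule via Remark \ref{rem1} and Proposition \ref{pro2.14}, quote the von Neumann algebra fixed-point result, and then cut back down to $\alpha(X)$ using w*-continuity of the point action at the identity---matches the paper's. The differences lie in the middle and in the formulation of the endgame. The paper does not invoke Proposition \ref{pro3.2} or the slice-map property of the injective algebra $\LI$; instead it uses that $\wt{\alpha}$ commutes with $\mathrm{id}\ot\delta_{G}$ to interchange the two fixed-point operations, obtaining the exact identity $(X\fcr_{\alpha}G)^{\wh{\alpha}}=(X\vt\LI)^{\wt{\alpha}}$ from $\BLT^{\delta_{G}}=\LI$, and then Lemma \ref{lem3.2} to identify $(M\vt\LI)^{\wt{\alpha}}=\alpha(M)$; your route through $N\rtimes_{\gamma}G$ and the identity $(Y\vt\BLT)\cap(N\vt\LI)=Y\vt\LI$ reaches the same intermediate conclusion $\beta(n)\in Y\vt\LI$ via inclusions rather than equalities, which suffices for the containment you need. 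For the final step the paper tests against functionals $\omega\in M_{*}$ annihilating $X$ and concludes that the bounded continuous function $s\mapsto\la\gamma_{s^{-1}}(x),\ \omega\ra$ vanishes identically, hence at $s=e$; your approximate identity $(f_{k})$ concentrating at $e$ is the predual formulation of the same evaluation-at-$e$ argument, and the required w*-continuity is supplied by Theorem \ref{thm1.1} in both versions. Your variant is slightly more modular (the von Neumann input is the single black box Proposition \ref{pro3.2} plus Digernes--Takesaki), while the paper's yields the sharper intermediate equality $(X\fcr_{\alpha}G)^{\wh{\alpha}}=\alpha(M)\cap(X\vt\LI)$; your chain $\alpha(X)\sub(X\scr_{\alpha}G)^{\wh{\alpha}}\sub(X\fcr_{\alpha}G)^{\wh{\alpha}}\sub\alpha(X)$ also correctly absorbs the spatial case, which the paper instead handles at the end by intersecting with $X\scr_{\alpha}G$.
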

\begin{proof} By Proposition \ref{pro2.14} and Remark \ref{rem1}, we may assume that $ (X,\alpha) $ is a subcomodule of a W*-$ \LI $-comodule $ (M,\alpha) $, i.e. $ M $ is a von Neumann algebra that contains $ X $ and the action $ \alpha $ extends to a W*-$ \LI $-action on $ M $, which we denote again by $ \alpha $. Then, since $ \wt{\alpha} $ commutes with $ \mathrm{id }_{M}\ot\delta_{G} $ (see Proposition \ref{pro4.2}), we get:
	\begin{align*}  
	   (X\fcr_{\alpha}G)^{\wh{\alpha}}&=\left( (X\vt\BLT)^{\wt{\alpha}}\right)^{\wh{\alpha}}\\
	   &=\left( (X\vt\BLT)^{\mathrm{id }_{X}\ot\delta_{G}}\right)^{\wt{\alpha}} \\
	   &=\left( X\vt(\BLT)^{\delta_{G}} \right)^{\wt{\alpha}}\\
	   &=(X\vt\LI)^{\wt{\alpha}} ,
	\end{align*}
because $ \BLT^{\delta_{G}}=\LI $ (see e.g. \cite[\S 0.3.9]{SVZ1}).	By Theorem \ref{thm1.1}, there exists a w*-group action $ \gamma\colon G\to \mathrm{Aut}(M) $, such that $ \pi_{\gamma}=\alpha $. Therefore, by Proposition \ref{pro4.1} and Lemma \ref{lem3.2} we get that $ (M\vt\LI)^{\wt{\alpha}}=\alpha(M) $ and so we have:
    \begin{align*} 
       (X\vt\LI)^{\wt{\alpha}}&=(M\vt\LI)^{\wt{\alpha}}\cap(X\vt \LI)\\
       &=\alpha(M)\cap(X\vt \LI).
    \end{align*}
Thus, it suffices to show that $ \alpha(M)\cap(X\vt \LI)=\alpha(X) $. The inclusion $ \alpha(M)\cap(X\vt \LI)\supseteq\alpha(X) $ is obvious. To prove the inclusion $ \alpha(M)\cap(X\vt \LI)\sub\alpha(X) $ consider an $ x\in M $, such that $ \alpha(x)\in X\vt\LI $. Then $ (\mathrm{id }_{M}\ot f)(\alpha(x))\in X $ for all $ f\in \LO $ and hence, for any $ \omega\in M_{*} $ with $ \omega|_{X}=0 $, we have:
    \begin{align*}
       &\la (\mathrm{id }_{M}\ot f)(\alpha(x)),\ \omega\ra   =0,\quad\forall f\in \LO\\
       \implies&\la\alpha(x),\ \omega\ot f \ra=0,\quad\forall f\in \LO\\
       \implies&\int_{G}f(s)\la\gamma_{s^{-1}}(x),\ \omega\ra\ ds=0, \quad\forall f\in \LO,
    \end{align*} 	
    because $ \alpha(x)(s) =\pi_{\gamma}(x)(s)=\gamma_{s^{-1}}(x)$, for all $ s\in G $. Since the function $ G\ni s\mapsto \la\gamma_{s^{-1}}(x),\ \omega\ra$ is bounded and continuous, the last equality implies that it is identically zero. Therefore, by taking $ s=e $ (where $ e $ is the unit of $ G $), we get that $ \la x,\ \omega\ra=0 $, for any $ \omega\in M_{*} $ with $ \omega|_{X}=0 $. Thus, $ x\in X $. So, we have proved that $ (X\fcr_{\alpha}G)^{\wh{\alpha}}=\alpha(X) $.\\
On the other hand, we have $ (X\scr_{\alpha}G)^{\wh{\alpha}}=(X\fcr_{\alpha}G)^{\wh{\alpha}}\cap(X\scr_{\alpha}G)=\alpha(X)\cap(X\scr_{\alpha}G)=\alpha(X) $ and the proof is complete.
 	
\end{proof} 

\begin{defin}\label{def2.16} Let $ (M,\Delta) $ be a Hopf von Neumann algebra acting on a Hilbert space $ K $ and $ (X,\alpha) $ be an $ M $-comodule with $ X $ being a w*-closed subspace of some $ B(H) $. We say that $ (X,\alpha) $ is \bfc{non-degenerate} if
	\[X\vt B(K)=\wsp\{(1_{H}\ot y)\alpha(x):\ y\in B(K),\ x\in X \}. \]	
\end{defin} 

\begin{lem}\label{lem2.17} Let $ (X,\alpha) $ be an $ M $-comodule for a Hopf von Neumann algebra $ (M,\Delta) $. If $ (X,\alpha) $ is non-degenerate, then 
	\[X=\wsp\{\omega\cdot x:\ \omega\in M_{*},\ x\in X \}, \]
where $ \omega\cdot x=(\mathrm{id }_{X}\ot\omega)(\alpha(x)) $ is the corresponding $ M_{*}$-module action on $ X $ (see Remark \ref{rem3})	
\end{lem}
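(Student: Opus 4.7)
The plan is to prove the nontrivial inclusion $X \sub Y$, where $Y := \wsp\{\omega\cdot x : \omega\in M_{*},\ x\in X\}$, via a duality argument. (The reverse inclusion $Y\sub X$ is immediate: for any $\omega\in M_{*}$ and $x\in X$, the slice $\omega\cdot x=(\mathrm{id}_{X}\ot\omega)(\alpha(x))$ lies in $X$ because $\alpha(x)\in X\ft M$.) Thinking of $X$ as a w*-closed subspace of $B(H)$ and using the Hahn--Banach theorem, it suffices to show that any $\phi\in B(H)_{*}$ with $\phi|_{Y}=0$ satisfies $\phi|_{X}=0$.

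First I would unpack the hypothesis $\phi|_{Y}=0$. For each $x\in X$, the element $a(x):=(\phi\ot \mathrm{id}_{B(K)})(\alpha(x))$ lies in $M$, since $\alpha(x)\in X\ft M$ and the Fubini tensor product is preserved by slicing with $\phi$. The assumption $\phi(\omega\cdot x)=0$ for all $\omega\in M_{*}$ rearranges, via Fubini, to $\omega(a(x))=0$ for every $\omega\in M_{*}$, whence $a(x)=0$ in $M$ for every $x\in X$.

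Next I would exploit this together with the $B(K)$-module behaviour of the slice map. A direct computation on elementary tensors (extended w*-continuously) gives
\[(\phi\ot \mathrm{id}_{B(K)})\bigl((1_{H}\ot y)\alpha(x)\bigr)=y\cdot a(x)=0\]
for every $y\in B(K)$ and $x\in X$. The non-degeneracy hypothesis asserts that the w*-closed linear span of such elements $(1_{H}\ot y)\alpha(x)$ is all of $X\vt B(K)$. Since $\phi\ot \mathrm{id}_{B(K)}\colon B(H)\vt B(K)\to B(K)$ is w*-continuous, we conclude that it vanishes on the whole of $X\vt B(K)$.

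Finally, applying this vanishing to the element $x\ot 1_{K}\in X\vt B(K)$ for an arbitrary $x\in X$ yields $\phi(x)\cdot 1_{K}=(\phi\ot \mathrm{id}_{B(K)})(x\ot 1_{K})=0$, so $\phi(x)=0$, as required. The only potential pitfall to be careful about is justifying the identity $(\phi\ot \mathrm{id}_{B(K)})((1_{H}\ot y)\alpha(x))=y\cdot a(x)$ at the level of Fubini tensor products rather than on the algebraic tensor product; this is however standard and follows from the w*-continuity of left multiplication by $1_{H}\ot y$ together with the density of $X\ot M$ in $X\ft M$ under slicing.
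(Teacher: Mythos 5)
Your proof is correct and follows essentially the same route as the paper's: a Hahn--Banach duality argument in which a functional $\phi$ annihilating all $\omega\cdot x$ is shown, via the slice map $\phi\ot\mathrm{id}_{B(K)}$ and its $B(K)$-module property, to vanish on $(1_H\ot y)\alpha(x)$ and hence, by non-degeneracy and w*-continuity, on all of $X\vt B(K)$, so that evaluating at $x\ot 1_K$ gives $\phi(x)=0$. The only cosmetic difference is that you name the slice $a(x)=(\phi\ot\mathrm{id}_{B(K)})(\alpha(x))$ and first deduce $a(x)=0$, whereas the paper carries out the same chain of implications without introducing this notation.
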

\begin{proof} Suppose that $ (X,\alpha) $ is non-degenerate and that the von Neumann algebra $ M $ acts on a Hilbert space $ K $ and $ X$ is w*-closed in $ B(H) $ for some Hilbert space $ H $. Let $ \phi\in X_{*} $, such that $ \phi(\omega\cdot x)=0 $, for all $ \omega\in M_{*}$ and $ x\in X $. Then, we have:
	\begin{align*} 
	   &\phi\circ(\mathrm{id }_{X}\ot\omega)\circ\alpha(x)=0,\quad \forall\omega\in M_{*},\ x\in X\\
	   \implies&\omega\circ(\phi\ot \mathrm{id }_{B(K)})\circ\alpha(x)=0,\quad \forall\omega\in M_{*},\ x\in X\\
	   \implies&(\phi\ot \mathrm{id }_{B(K)})\circ\alpha(x)=0,\quad \forall x\in X\\
	   \implies& b(\phi\ot \mathrm{id }_{B(K)})\circ\alpha(x)=0,\quad \forall b\in B(K),\ x\in X\\
	   \implies&(\phi\ot \mathrm{id }_{B(K)})\left((1_{H}\ot b) \alpha(x)\right) =0,\quad \forall b\in B(K),\ x\in X.\\
	\end{align*}
Since $ (X,\alpha) $ is non-degenerate, the last condition implies that $ (\phi\ot \mathrm{id }_{B(K)})(y)=0 $ for any $ y\in X\vt B(K) $; thus $ \phi(x)1=(\phi\ot \mathrm{id }_{B(K)})(x\ot1)=0 $	for any $ x\in X $ and hence $ \phi=0 $. So the desired conclusion follows from the Hahn-Banach theorem.

\end{proof}  

For the proof of the next result see \cite[Lemma II.1.4 and Corollary II.1.5]{SVZ2}.

%\begin{lem}\label{lem6.3}\cite[Lemma II.1.4.]{SVZ2} 
%	Let $ \delta\colon N\to N\vt L(G) $ be a W*-$ L(G) $-action on a von Neumann algebra $ N $. For any $ x\in N $ and any $ f,\ k\in A_{c}(G) $, we have
%	\[\int_{G}(1_{N}\ot\lambda_{s})\delta(_{s}f\cdot k\cdot x)\Delta_{G}(s)^{-1}\ ds=(k\cdot x)\ot\lambda(f\Delta_{G}^{-1}), \]
%	where $_{s}f(t)=f(st)$ and the integral is understood in the w*-topology and  $ k\cdot x=(\mathrm{id }_{N}\ot k)(\delta(x)) $, for $ k\in A(G) $ and $ x\in N $. Moreover, for $ h\in\LO $, we denote $ \lambda(h)=\int_{G} h(s)\lambda_{s}\ ds\in \BLT $.
%\end{lem} 

\begin{pro}\label{cor6.1}\cite[Corollary II.1.5]{SVZ2} Let $ \delta\colon N\to N\vt L(G) $ be a W*-$ L(G) $-action on a von Neumann algebra $ N $. For any $ x\in N $ and any $ k\in A(G) $, we have
	\[(k\cdot x)\ot1_{\LT}\in\wsp\{(1_{N}\ot\lambda_{s})\delta(h\cdot k\cdot x):\ s\in G,\ h\in A(G) \}, \]
where $ k\cdot x=(\mathrm{id }_{N}\ot k)(\delta(x)) $, for $ k\in A(G) $ and $ x\in N $.	
\end{pro}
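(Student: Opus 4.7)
The plan is to apply Hahn--Banach duality. Set $z := k\cdot x$ and
\[Y := \wsp\{(1_{N}\ot\lambda_{s})\delta(h\cdot z):\ s\in G,\ h\in A(G)\}\sub N\vt L(G).\]
It suffices to prove that any $\phi \in (N\vt L(G))_{*}$ annihilating $Y$ also satisfies $\phi(z\ot 1_{L(G)})=0$.

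A first reduction: for each fixed $h\in A(G)$, the map $b\mapsto \phi((1_{N}\ot b)\delta(h\cdot z))$ is a normal functional on $L(G)$ that vanishes on every $\lambda_{s}$, hence on all of $L(G)=\wsp\{\lambda_s : s \in G\}$. Thus $\phi((1_{N}\ot b)\delta(h\cdot z))=0$ for every $b\in L(G)$ and $h\in A(G)$. Applying the coaction axiom $(\delta\ot \mathrm{id})\delta = (\mathrm{id}\ot\delta_G)\delta$ and slicing by $h$ on the third factor yields $\delta(h\cdot z) = (\mathrm{id}_N \ot L_h)\delta(z)$, where $L_h := (\mathrm{id}\ot h)\circ\delta_G \colon L(G)\to L(G)$; combined with $z = k\cdot x$ this gives $\delta(h\cdot z) = (\mathrm{id}_N \ot L_{hk})\delta(x)$. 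The annihilation condition thus reads
\[\phi\bigl((\mathrm{id}_N \ot (M_b \circ L_{hk}))\delta(x)\bigr) = 0\qquad \forall b\in L(G),\ h\in A(G),\]
where $M_b$ denotes left multiplication by $b$ on $L(G)$.

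The crux is to extract $\phi(z\ot 1_{L(G)})=0$ from this. On a generator $\lambda_t\in L(G)$ one has $(M_b\circ L_{hk})(\lambda_t) = (hk)(t)\,b\lambda_t$; since $A(G)$ separates points of $G$, varying $h$ makes $h(t)$ essentially arbitrary, and varying $b$ over $L(G)$ freely moves the "group factor", so the family $\{M_b\circ L_{hk}\}$ produces a w*-dense set of "Fourier multipliers weighted by $k$" on $L(G)$. Applied to $\delta(x)$ and sliced appropriately these should recover $(k\cdot x)\ot 1_{L(G)}$. To carry out this step rigorously I would follow the approach of \cite{SVZ2}: package the combined action of $M_b$ and $L_{hk}$ into a single conjugation using the Kac--Takesaki unitary $W_G$ and the pentagon relation, then apply a Fubini-type slice identity as in Lemma~II.1.4 there, from which Corollary~II.1.5 follows.

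The main obstacle is exactly this last step. A naive strategy would try to approximate the identity $1 \in A(G)^{**}$ by a bounded net in $A(G)$, but a bounded approximate identity in $A(G)$ exists only for amenable $G$. The \cite{SVZ2} argument bypasses amenability by using the two degrees of freedom jointly ($b \in L(G)$ and $h \in A(G)$ vary independently), decoding them through the algebraic identity encoded by $W_G$ rather than by a bare approximation argument.
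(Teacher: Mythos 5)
Your proposal does not actually prove the statement; it reduces it to the hard step and then defers that step back to \cite{SVZ2}. The paper itself offers no proof either --- it explicitly points the reader to \cite[Lemma II.1.4 and Corollary II.1.5]{SVZ2} --- so there is nothing in the paper to compare against except that citation, and your attempt ends in the same place. What you do establish is correct and sensible: the Hahn--Banach reduction, the observation that annihilating $(1\ot\lambda_{s})\delta(h\cdot z)$ for all $s$ forces annihilation of $(1\ot b)\delta(h\cdot z)$ for all $b\in L(G)$, and the identity $\delta(h\cdot z)=(\mathrm{id}_{N}\ot M_{h})\delta(z)$ coming from the coaction axiom (your $L_{h}$ is the multiplier map $M_{h}=(\mathrm{id}\ot h)\circ\delta_{G}$ of Section 4). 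You also correctly diagnose why a bounded approximate identity in $A(G)$ is unavailable in general.

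The gap is the crux you name yourself. The claim that ``varying $h$ makes $h(t)$ essentially arbitrary, and varying $b$ over $L(G)$ freely moves the group factor, so the family $\{M_{b}\circ L_{hk}\}$ produces a w*-dense set'' is not an argument, and as stated it points in the wrong direction: the target map $\lambda_{t}\mapsto k(t)1$ cannot be reached by choosing $b$ and $h$ independently, since $M_{b}\circ M_{hk}$ sends $\lambda_{t}$ to $(hk)(t)\,b\lambda_{t}$ and no fixed $b$ turns $\lambda_{t}$ into $\lambda_{e}$ for all $t$. The whole content of \cite[Lemma II.1.4]{SVZ2} is precisely the \emph{correlated} choice: one writes $(k\cdot x)\ot 1_{\LT}$ as a w*-convergent integral over $s\in G$ of terms $(1_{N}\ot\lambda_{s})\delta(h_{s}\cdot k\cdot x)$, where $h_{s}\in A(G)$ is obtained from $k$ (written as a coefficient $\omega_{\xi,\eta}$ of the regular representation, with a compact-support approximation) by a translation depending on $s$; the identity is verified via $W_{G}$ and the relation $(\delta\ot\mathrm{id})\circ\delta=(\mathrm{id}\ot\delta_{G})\circ\delta$. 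Each such integrand lies in the span defining $Y$, whence the integral lies in its w*-closure. Until you produce that integral formula (or an equivalent device), the proposition is unproved; everything before it is only a reformulation of what must be shown.
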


\begin{cor}\label{cor6.2} Let $ \delta\colon N\to N\vt L(G) $ be a W*-$ L(G) $-action on a von Neumann algebra $ N\sub B(K) $ and let $ Y\sub N $ be an $ L(G) $-subcomodule of $ N $ (i.e. $ Y $ is w*-closed subspace of $ N $ and $ \delta(Y)\sub Y\ft L(G) $). Then, the following are equivalent:
	\begin{itemize}
		\item[(i)] $ Y=\wsp\{h\cdot y:\ h\in A(G),\ y\in Y \} $,
		\item[(ii)] $ (Y,\delta) $ is non-degenerate,
		\end{itemize}
where $ h\cdot y=(\mathrm{id }_{Y}\ot h)(\delta(y)) $, for $ h\in A(G) $ and $ y\in Y $.		
\end{cor}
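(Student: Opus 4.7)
The plan is to prove each implication separately, with (ii) $\Rightarrow$ (i) essentially free and (i) $\Rightarrow$ (ii) using Proposition \ref{cor6.1} as the key ingredient.

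For (ii) $\Rightarrow$ (i), one simply applies Lemma \ref{lem2.17} to the Hopf-von Neumann algebra $M = L(G)$: non-degeneracy of $(Y,\delta)$ forces $Y = \wsp\{h\cdot y:\ h\in A(G),\ y\in Y\}$, since $A(G)\cong L(G)_{*}$ is the predual under which the module action in Remark \ref{rem3} is defined.

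For (i) $\Rightarrow$ (ii), set $Z := \wsp\{(1_{K}\ot z)\delta(y):\ z\in B(L^{2}(G)),\ y\in Y\}$ and aim to show $Z = Y\vt B(L^{2}(G))$. The inclusion $Z\sub Y\vt B(L^{2}(G))$ uses that $\delta(Y)\sub Y\ft L(G)\sub Y\vt B(L^{2}(G))$ (the last inclusion is via injectivity of $B(L^{2}(G))$) together with Remark \ref{rem5}. Conversely, observe that $Z$ is invariant under left multiplication by operators of the form $1_{K}\ot z_{0}$ for $z_{0}\in B(L^{2}(G))$ (the spanning set is obviously stable and w*-continuity of left multiplication propagates to the w*-closure), so it suffices to verify that $Y\ot 1_{\LT}\sub Z$.

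This is where Proposition \ref{cor6.1} enters. For any $y\in Y$ and $k\in A(G)$, since $Y$ is an $L(G)$-subcomodule we have $h\cdot k\cdot y\in Y$ for every $h\in A(G)$, so
\[(k\cdot y)\ot 1_{\LT} \in \wsp\{(1_{N}\ot\lambda_{s})\delta(h\cdot k\cdot y):\ s\in G,\ h\in A(G)\} \sub Z.\]
Because the map $y'\mapsto y'\ot 1_{\LT}$ from $Y$ into $Y\vt B(L^{2}(G))$ is a w*-continuous complete isometry, taking w*-closed linear span yields $\wsp\{k\cdot y:\ k\in A(G),\ y\in Y\}\ot 1_{\LT}\sub Z$. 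Hypothesis (i) then gives $Y\ot 1_{\LT}\sub Z$, and combined with the stability of $Z$ under left multiplication by $1_{K}\ot z_{0}$ we conclude $Y\vt B(L^{2}(G))\sub Z$.

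The only non-routine step is the appeal to Proposition \ref{cor6.1}, which is quoted from \cite{SVZ2}; once that is invoked the rest is bookkeeping. The small technicalities to watch are that elements $k\cdot y$ and $h\cdot k\cdot y$ stay inside $Y$ (from $\delta(Y)\sub Y\ft L(G)$ and the definition of the Fubini tensor product) and that $\delta(Y)\sub Y\vt B(L^{2}(G))$ (injectivity of $B(L^{2}(G))$).
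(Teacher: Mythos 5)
Your proposal is correct and follows essentially the same route as the paper: (ii)$\implies$(i) is Lemma \ref{lem2.17}, and (i)$\implies$(ii) uses Proposition \ref{cor6.1} to place $z\ot 1_{\LT}$ in the span $Z$ for $z\in Y$, then left multiplication by $1_{K}\ot z_{0}$ and the bimodule property from Remark \ref{rem5} to finish. Your version merely makes explicit a few points the paper leaves implicit (that $h\cdot k\cdot y$ remains in $Y$, and the w*-continuity of $y'\mapsto y'\ot 1_{\LT}$), which is fine.
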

\begin{proof} The implication (ii)$ \implies $(i) is immediate by Lemma \ref{lem2.17}. 
	
	(i)$ \implies $(ii): Since $ Y=\wsp\{h\cdot y:\ h\in A(G),\ y\in Y \} $, from Proposition \ref{cor6.1} it follows that
	\[z\ot1_{\LT}\in\wsp\{(1_{K}\otimes b)\delta(y):\ b\in \BLT,\ y\in Y \}, \] 
	for any $ z\in Y $.\\
	Therefore, for any $ z\in Y $ and $ c\in \BLT $, we have that
	\[ z\ot c=(1_{K}\ot c)(z\ot 1_{\LT})\in\wsp\{(1_{K}\otimes b)\delta(y):\ b\in \BLT,\ y\in Y \}, \] 
	because the multiplication in $ B(K)\vt\BLT $ is separately w*-continuous.\\ Thus, we have that 
	\[Y\vt\BLT\sub\wsp\{(1_{K}\otimes b)\delta(y):\ b\in \BLT,\ y\in Y \}. \]
	The converse inclusion follows from $ \delta(Y)\sub Y\ft L(G)\sub Y\vt\BLT $ and the fact that $ Y\vt\BLT $ is a $ \CIK\vt\BLT $-bimodule (see Remark \ref{rem5}).
	
\end{proof}

In what follows, for a Hilbert space $ H $ and any subsets $ A,\ B\sub B(H) $, we will denote by $ AB $ the set of all products $ ab $, where $ a\in A $ and $ b\in B $.

The next result gives a characterization of those $ \LI $-comodules for which  the corresponding Fubini crossed product is equal to the spatial crossed product.

\begin{thm}\label{mainthm} Let $ (X,\alpha) $ be an $ \LI $-comodule. The following are equivalent:
	\begin{itemize} 
		\item[(i)] $ X\fcr_{\alpha}G=X\scr_{\alpha}G $,
		\item[(ii)] $ (X\fcr_{\alpha}G,\wh{\alpha}) $ is a non-degenerate $ L(G) $-comodule,
		\item[(iii)] $ X\fcr_{\alpha}G=\wsp\{h\cdot y:\ h\in A(G),\ y\in X\fcr_{\alpha}G \}, $
	\end{itemize} 
where 	$ h\cdot y=(\mathrm{id }_{X\fcr_{\alpha}G}\ot h)(\wh{\alpha}(y)) $, for  $ h\in A(G),\ y\in X\fcr_{\alpha}G $.	
\end{thm}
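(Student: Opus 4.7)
The plan is to establish the three-way equivalence by showing (ii) $\iff$ (iii), (i) $\Rightarrow$ (iii), and the main direction (iii) $\Rightarrow$ (i).

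For (ii) $\iff$ (iii), I apply Corollary \ref{cor6.2}. Fixing $X\sub B(H)$ w*-closed, set $N := B(H)\vt\BLT$ equipped with the W*-$L(G)$-action $\delta := \mathrm{id}_{B(H)}\ot\delta_G$, and let $Y := X\fcr_{\alpha}G \sub N$. By Remark \ref{rem4} together with $\wh{\alpha} = \delta|_Y$, $Y$ is an $L(G)$-subcomodule of $N$; hence the conditions (ii) and (iii) are exactly the two conditions of Corollary \ref{cor6.2} applied to $Y\sub N$.

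For (i) $\Rightarrow$ (iii), I use the identity $\wh{\alpha}((1\ot\lambda_s)\alpha(x)) = ((1\ot\lambda_s)\alpha(x))\ot\lambda_s$ established in the proof of Proposition \ref{pro2.13}, which yields $h\cdot((1\ot\lambda_s)\alpha(x)) = h(s)(1\ot\lambda_s)\alpha(x)$ for every $h\in A(G)$. Choosing $h$ with $h(s)=1$ exhibits each spatial generator as an element of $\{h\cdot y': h\in A(G),\ y'\in X\fcr_{\alpha}G\}$; taking w*-closed linear spans in the equality $X\scr_{\alpha}G = X\fcr_{\alpha}G$ from (i) then gives (iii).

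The substantive direction is (iii) $\Rightarrow$ (i). Since $X\scr_{\alpha}G \sub X\fcr_{\alpha}G$ is Proposition \ref{pro2.13}, it suffices to prove the reverse inclusion. Applying Proposition \ref{cor6.1} to $(N,\delta)$ at an arbitrary $y\in X\fcr_{\alpha}G$ and $k\in A(G)$ yields
\[(k\cdot y)\ot 1_{L^2(G)} \in \wsp\{(1_N\ot\lambda_s)\wh{\alpha}(h\cdot k\cdot y): s\in G,\ h\in A(G)\} \sub N\vt L(G).\]
The strategy is to show that the right-hand side lies in $(X\scr_{\alpha}G)\ft L(G)$; once this is done, slicing by any $\omega\in A(G)$ with $\omega(1_{L(G)})=1$ gives $k\cdot y\in X\scr_{\alpha}G$, and combining with (iii) concludes $X\fcr_{\alpha}G\sub X\scr_{\alpha}G$.

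The main obstacle is precisely this containment step, since a priori $\wh{\alpha}(h\cdot k\cdot y)$ only lies in $(X\fcr_{\alpha}G)\ft L(G)$. My plan is to handle it by exploiting the identification of fixed point subspaces $(X\scr_{\alpha}G)^{\wh{\alpha}} = (X\fcr_{\alpha}G)^{\wh{\alpha}} = \alpha(X)$ from Proposition \ref{pro2.15}, together with the non-degeneracy hypothesis (iii), to approximate $h\cdot k\cdot y$ in a manner that forces its image under $\wh{\alpha}$ into $(X\scr_{\alpha}G)\ft L(G)$; this should amount to a spectral-decomposition argument extracted by iterating Proposition \ref{cor6.1} and using that $X\scr_{\alpha}G$ is itself automatically a non-degenerate $L(G)$-subcomodule of $X\fcr_{\alpha}G$ (by the same eigenvector computation used in the (i)$\Rightarrow$(iii) step).
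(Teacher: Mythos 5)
Your reduction of (ii)$\iff$(iii) to Corollary \ref{cor6.2} and your proof of (i)$\implies$(iii) via the eigenvector identity $\wh{\alpha}((1_H\ot\lambda_s)\alpha(x))=((1_H\ot\lambda_s)\alpha(x))\ot\lambda_s$ are both correct, and they agree in essence with the paper (the latter is exactly the computation the paper uses to show that the spatial crossed product is always a non-degenerate $L(G)$-comodule). The problem is the substantive direction (iii)$\implies$(i), which in your write-up is a plan rather than a proof, and the plan does not close. The application of Proposition \ref{cor6.1} to $N=B(H)\vt\BLT$ with $\delta=\mathrm{id}_{B(H)}\ot\delta_G$ is legitimate and does give $(k\cdot y)\ot 1_{\LT}\in\wsp\{(1_K\ot\lambda_s)\wh{\alpha}(h\cdot k\cdot y):\ s\in G,\ h\in A(G)\}$; but the ``containment step'' you then need --- that this w*-closed span lies in $(X\scr_{\alpha}G)\ft L(G)$ --- is not a technical verification, it is the theorem in disguise. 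Indeed, $\wh{\alpha}(h\cdot k\cdot y)\in(X\scr_{\alpha}G)\ft L(G)$ would follow from $h\cdot k\cdot y\in X\scr_{\alpha}G$ (Proposition \ref{pro2.13}), which is precisely an instance of the inclusion $X\fcr_{\alpha}G\sub X\scr_{\alpha}G$ being proved, and you offer no independent route to it. Your suggested remedy --- ``iterating'' Proposition \ref{cor6.1} together with Proposition \ref{pro2.15} and non-degeneracy of $X\scr_{\alpha}G$ --- is a list of ingredients, not an argument: every application of Proposition \ref{cor6.1} outputs expressions of the form $(1_K\ot\lambda_s)\wh{\alpha}(w)$ with $w\in X\fcr_{\alpha}G$, so iteration never moves the first leg from $X\fcr_{\alpha}G$ down into $X\scr_{\alpha}G$, and no ``spectral decomposition'' is actually exhibited.

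What is missing is the mechanism the paper uses to convert non-degeneracy plus the fixed-point identification into the desired inclusion (write $Y=X\fcr_{\alpha}G$, $Y_0=X\scr_{\alpha}G$, $K=H\ot\LT$). The paper introduces the unitary $R$ on $K\ot\LT$ defined by $(R\xi)(s)=(1_H\ot\lambda_{s^{-1}})\xi(s)$, proves $R\in\CI\vt L(G)\vt\LI$, and shows that $\mathrm{Ad}R$ maps $Y\vt\BLT$ onto itself and intertwines the two $L(G)$-actions $\wt{\delta}=(\mathrm{id}\ot\mathrm{Ad}W_G)\circ(\mathrm{id}\ot\sigma)\circ(\delta\ot\mathrm{id})$ and $\bar{\delta}=(\mathrm{id}\ot\sigma)\circ(\delta\ot\mathrm{id})$ on $B(K)\vt\BLT$. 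Since $(B(K)\vt\BLT)^{\bar{\delta}}=B(K)^{\delta}\vt\BLT$, conjugation by $R$ identifies the $\wt{\delta}$-fixed part of $Y\vt\BLT$ with $\alpha(X)\vt\BLT$ (this is where Proposition \ref{pro2.15} enters). On the other hand, non-degeneracy combined with $\BLT=\wsp\{L(G)\LI\}$ and the observation $(\CIK\vt\LI)\delta(B(K))\sub(B(K)\vt\BLT)^{\wt{\delta}}$ gives $Y\vt\BLT\sub\wsp\{(1_K\ot y)T:\ y\in L(G),\ T\in(Y\vt\BLT)^{\wt{\delta}}\}$; applying $\mathrm{Ad}R$ to this inclusion replaces the $\wt{\delta}$-fixed terms by $\alpha(X)\vt\BLT$ and yields $Y\vt\BLT\sub Y_0\vt\BLT$, hence $Y=Y_0$. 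Some conjugation argument of this kind, which trades terms of the form $\wh{\alpha}(\cdot)$ for terms built from $\alpha(X)$ and $\CI\vt L(G)$, is exactly what your containment step requires; without it the proposal is incomplete.
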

\begin{proof} By Proposition \ref{pro2.14} and Remark \ref{rem1}, we may assume that $ (X,\alpha) $ is a subcomodule of a W*-$ \LI $-comodule $ (M,\alpha) $, with $ M $ acting on a Hilbert space $ H $.  Also, let us fix some notation. Set $ K:=H\otimes\LT $, $ N:=B(K) $, $ Y:=X\fcr_{\alpha}G $, $ Y_{0}:=X\scr_{\alpha}G$ and $ \delta:=\mathrm{id }_{B(H)}\otimes\delta_{G}\colon N\to N\vt L(G) $.
	
The equivalence (ii)$ \iff $(iii) is immediate from Corollary \ref{cor6.2}.	
	
	\medskip\noindent
(ii)$ \implies $(i): Suppose that $ (X\fcr_{\alpha}G,\wh{\alpha}) $ is non-degenerate and consider the following faithful *-representation of $ L(G) $ on $ K $:
\[u\colon L(G)\to B(K):\ u(x)=1_{H}\otimes x. \]
Then, observe that 
\begin{itemize}
	\item[(1)] $ u(L(G))Yu(L(G))\sub Y $, i.e. $ Y $ is a $ \CI\vt L(G) $-subbimodule of $ N $,
	\item[(2)] $ \delta\circ u=(u\otimes \mathrm{id }_{L(G)})\circ\delta_{G} $, i.e. $ u $ is a comodule monomorphism from $ (L(G),\delta_{G}) $ to $ (N,\delta) $.
\end{itemize}	
The representation $ u $ defines a unitary operator $ R\colon K\otimes\LT\to K\otimes\LT $ by the formula
\[(R\xi)(s)=u(\lambda_{s^{-1}})(\xi(s))=(1_{H}\otimes\lambda_{s^{-1}})(\xi(s)), \]
for $ s\in G$ and $ \xi\in L^{2}(G,K)\simeq K\otimes\LT $.\\
We claim that $ R\in\CI\vt L(G)\vt\LI $. Indeed, if we take $ T\in B(H) $, $ r\in G $ and $ f\in\LI $, then, for every $ \eta\in H,\ \phi,\ \psi\in\LT $ and $ s\in G $, we have
\begin{align*} 
(R(T\ot\rho_{r}\ot f)(\eta\ot\phi\ot\psi))(s)&=(R(T\eta\ot\rho_{r}\phi\ot f\psi))(s)\\
&=(1_{H}\ot\lambda_{s^{-1}})(f(s)\psi(s)(T\eta\ot\rho_{r}\phi))\\
&=f(s)\psi(s)(T\eta\ot\lambda_{s^{-1}}\rho_{r}\phi)\\
&=f(s)\psi(s)(T\eta\ot\rho_{r}\lambda_{s^{-1}}\phi)\\
&=(T\ot\rho_{r}\ot f)(\eta\ot\lambda_{s^{-1}}\phi\ot\psi)(s)\\
&=((T\ot\rho_{r}\ot f)R)(\eta\ot\phi\ot\psi))(s)
\end{align*}
thus $ R\in (B(H)\vt R(G)\vt\LI)'=\CI\vt L(G)\vt\LI $.\\
Combining that $ R\in\CI\vt L(G)\vt\LI $ with (1) and Remark \ref{rem5}, yields that the normal *-isomorphism \[ \mathrm{Ad }R\colon N\vt\BLT\to N\vt\BLT:\ T\mapsto RTR^{*} \] maps $ Y\vt\BLT $ onto $ Y\vt\BLT $.\\
Now, we consider two W*-$ L(G) $-actions on the von Neumann algebra $ N\vt\BLT $, namely
\[\wt{\delta}:=(\mathrm{id }_{N}\otimes \mathrm{Ad }W_{G})\circ(\mathrm{id }_{N}\otimes\sigma)\circ(\delta\otimes \mathrm{id }_{\BLT}) \]
and \[\bar{\delta}:=(\mathrm{id }_{N}\otimes\sigma)\circ(\delta\otimes \mathrm{id }_{\BLT}). \] 
Let $ \Sigma\in B(K\otimes L^{2}(G\times G)) $ with $ \Sigma\xi(s,t)=\xi(t,s) $, for $ \xi\in K\otimes L^{2}(G\times G) $. Then, for any $ \xi\in K\otimes L^{2}(G\times G) $, we have:
\begin{align*} 
\bar{\delta}(R)\xi(s,t)&=\Sigma(\delta\otimes \mathrm{id }_{\BLT})(R)(\Sigma\xi)(s,t)=(\delta\otimes \mathrm{id }_{\BLT})(R)(\Sigma\xi)(t,s)\\
&=\delta(u(\lambda_{s^{-1}}))(\Sigma\xi)(t,s)=(u\otimes \mathrm{id }_{L(G)})(\delta_{G}(\lambda_{s^{-1}}))(\Sigma\xi)(t,s)\\
&=(u(\lambda_{s^{-1}})\otimes\lambda_{s^{-1}})(\Sigma\xi)(t,s)=u(\lambda_{s^{-1}})(\Sigma\xi)(st,s)=u(\lambda_{s^{-1}})\xi(s,st)\\
&=(R\otimes1)(1\otimes W_{G})\xi(s,t)
\end{align*}
Thus, $ \bar{\delta}(R)=(R\otimes1)(1\otimes W_{G}) $, which implies that $ \mathrm{Ad }R $ is an $ L(G) $-comodule isomorphism from $ (N\vt\BLT,\wt{\delta})$ onto $ (N\vt\BLT,\bar{\delta})$. Indeed, for any $ T\in N\vt\BLT $, we have
\begin{align*} 
(\bar{\delta}\circ \mathrm{Ad }R)(T)&=\bar{\delta}(R)\bar{\delta}(T)\bar{\delta}(R)^{*}=(R\otimes1)(1\otimes W_{G})\bar{\delta}(T)(1\otimes W_{G}^{*})(R^{*}\otimes 1)\\
&=(\mathrm{Ad }R\otimes \mathrm{id }_{\BLT})\circ(\mathrm{id }_{N}\otimes \mathrm{Ad }W_{G})\circ\bar{\delta}(T)\\
&=(\mathrm{Ad }R\otimes \mathrm{id }_{\BLT})\circ\wt{\delta}(T)
\end{align*}
Therefore, $ \mathrm{Ad }R $ preserves fixed points, i.e.
\[\mathrm{Ad }R\left( (N\vt\BLT)^{\wt{\delta}} \right)=\left(N\vt\BLT \right ) ^{\bar{\delta}}. \]
On the other hand, $N^{\delta}\vt\BLT= \left(N\vt\BLT \right ) ^{\bar{\delta}} $. Indeed, for every $ x\in N^{\delta} $ and $ b\in\BLT $, we have
\[\bar{\delta}(x\otimes b)=(\mathrm{id }_{N}\otimes\sigma)(\delta(x)\otimes b)=(\mathrm{id }_{N}\otimes\sigma)(x\otimes1\otimes b)=x\otimes b\otimes1, \]
hence we have the inclusion $ N^{\delta}\vt\BLT\sub \left(N\vt\BLT \right ) ^{\bar{\delta}}  $. For the converse inclusion, take $ T\in N\vt\BLT $ such that $ \bar{\delta}(T)=T\otimes1 $. Then, for $ \omega\in\BLT_{*} $, $ \phi\in N_{*} $ and $ h\in A(G) $, we have
\begin{align*} 
\langle \delta\circ(\mathrm{id }_{N}\otimes\omega)(T),\ \phi\otimes h \rangle&=\langle (\mathrm{id }_{N}\otimes\omega)(T),\ (\phi\otimes h)\circ\delta \rangle\\
&=\langle T,\ (\phi\otimes h\otimes\omega)\circ(\delta\otimes \mathrm{id }_{\BLT}) \rangle\\
&=\langle T,\ (\phi\otimes \omega\otimes h)\circ(\mathrm{id }_{N}\otimes\sigma)\circ(\delta\otimes \mathrm{id }_{\BLT}) \rangle\\
&=\langle \bar{\delta}(T),\ \phi\otimes \omega\otimes h \rangle=\langle T\otimes1,\ \phi\otimes \omega\otimes h \rangle\\
&=\langle T,\ \phi\otimes \omega\rangle \langle 1,\ h \rangle=\langle (id\otimes\omega)(T)\otimes1,\ \phi\otimes h \rangle
\end{align*}    
thus $ (\mathrm{id }_{N}\otimes\omega)(T)\in N^{\delta} $ for all $ \omega\in\BLT_{*} $, which implies that $ T\in N^{\delta}\vt\BLT $.\\
Furthermore, by Proposition \ref{pro2.15}, we have $ \alpha(X)=Y^{\wh{\alpha}} $. Also, recall that $ \BLT=\wsp\{yf:\ f\in\LI,\ y\in L(G) \} $ and observe that $ (\CIK\vt\LI)\delta(N)\sub(N\vt\BLT)^{\wt{\delta}} $. Indeed, if $ y\in N $ and $ f\in\LI $, then
\begin{align*} 
\wt{\delta}(\delta(y))&=(\mathrm{id }_{N}\otimes \mathrm{Ad }W_{G})\circ(\mathrm{id }_{N}\otimes\sigma)\circ(\delta\otimes \mathrm{id }_{\BLT})(\delta(y))\\
&=(\mathrm{id }_{N}\otimes \mathrm{Ad }W_{G})\circ(\mathrm{id }_{N}\otimes\sigma)\circ(\mathrm{id }_{N}\otimes\delta_{G})(\delta(y))\\
&=(\mathrm{id }_{N}\otimes \mathrm{Ad }W_{G})\circ(\mathrm{id }_{N}\otimes\delta_{G})(\delta(y))\qquad(\text{because }\sigma\circ\delta_{G}=\delta_{G})\\
&=(\mathrm{id }_{N}\otimes \mathrm{Ad }W_{G})\circ(\mathrm{id }_{N}\otimes \mathrm{Ad }W_{G}^{*})(\delta(y)\otimes1)\quad(\delta_{G}(x)=W_{G}^{*}(x\otimes1)W_{G})\\
&=\delta(y)\otimes1 
\end{align*}
and
\begin{align*} 
\wt{\delta}(1_{K}\otimes f)&=(\mathrm{id }_{N}\otimes \mathrm{Ad }W_{G})\circ(\mathrm{id }_{N}\otimes\sigma)\circ(\delta\otimes \mathrm{id }_{\BLT})(1_{K}\otimes f)\\
&=(\mathrm{id }_{N}\otimes \mathrm{Ad }W_{G})(1_{K}\otimes f\otimes1_{\LT})\\&=1_{K}\otimes W_{G}^{*}(f\otimes1_{\LT})W_{G}\\
&=1_{K}\otimes f\otimes1_{\LT} \qquad\quad (W_{G}\in\LI\vt L(G))
\end{align*}
By assumption $ (Y,\delta) $ is non-degenerate, i.e.
\[Y\vt\BLT=\wsp\{(1_{K}\otimes b)\delta(x):\ x\in Y,\ b\in\BLT \}\]
and $ \BLT=\wsp\left\{ L(G)\LI  \right\} $. Thus it follows that
\begin{align*}
Y\vt\BLT&=\wsp\{(1_{K}\otimes y)(1_{K}\otimes f)\delta(x):\ x\in Y,\ y\in L(G),\ f\in\LI \}\\
&\sub\wsp\left\lbrace (1_{K}\otimes y)T:\ y\in L(G),\ T\in (Y\vt\BLT)^{\wt{\delta}} \right\rbrace. \tag{*}%\quad(*) 
\end{align*}
The last inclusion follows from $ (\CIK\vt\LI)\delta(N)\sub(N\vt\BLT)^{\wt{\delta}} $ and the fact that $ (Y\vt\BLT,\wt{\delta}) $ is an $ L(G) $-subcomodule of $ (N\vt\BLT,\wt{\delta}) $.\\
Therefore, since $ \mathrm{Ad }R $ maps $ Y\vt\BLT $ onto $ Y\vt\BLT $, the above inclusion (*)  implies that
\begin{align*}  
Y\vt\BLT&=R(Y\vt\BLT)R^{*}\\
&\sub\wsp\left\lbrace R(\CIK\vt L(G))R^{*}R(Y\vt\BLT)^{\wt{\delta}}R^{*} \right\rbrace.
\end{align*}
On the other hand, we have
\begin{align*} 
R(Y\vt\BLT)^{\wt{\delta}}R^{*}&=\left(R(N\vt\BLT)^{\wt{\delta}}R^{*}\right)\cap(Y\vt\BLT)\\
&=(N\vt\BLT)^{\bar{\delta}}\cap(Y\vt\BLT)\\
&=(N^{\delta}\vt\BLT)\cap(Y\vt\BLT)\\&=((N^{\delta}\cap Y)\vt\BLT)\\
&=Y^{\delta}\vt\BLT\\
&=Y^{\wh{\alpha}}\vt\BLT=\alpha(X)\vt\BLT
\end{align*}
and \[ R(\CIK\vt L(G))R^{*}\sub\CI\vt L(G)\vt\BLT, \] because $ R\in\CI\vt L(G)\vt\BLT $.\\
Hence, $ Y\vt\BLT\sub\wsp\left\lbrace R(\CIK\vt L(G))R^{*}R(Y\vt\BLT)^{\wt{\delta}}R^{*} \right\rbrace $ yields that    
\begin{align*}     
Y\vt\BLT&\sub\wsp\left\lbrace \left(\CI\vt L(G)\vt\BLT\right)\left(\alpha(X)\vt\BLT\right) \right\rbrace\\
&\sub\left(\wsp\left\lbrace \left(\CI\vt L(G)\right)\alpha(X) \right\rbrace\right)\vt\BLT\\
&=Y_{0}\vt\BLT.
\end{align*} 
It follows that $ Y=Y_{0} $, that is $ X\fcr_{\alpha}G=X\scr_{\alpha}G $.
\medskip

\noindent
(i)$ \implies $(ii): Suppose that condition (i) holds and keep the same notation as above. By Remark \ref{rem5} we have that $ Y\vt\BLT $ is a $ \CIK\vt\BLT $-bimodule. Thus, since $ \wh{\alpha}(Y)\sub Y\ft L(G)\sub Y\vt\BLT $, we have the inclusion $ Y\vt\BLT\supseteq\wsp\{(1_{K}\ot b)\wh{\alpha}(y):\ b\in\BLT,\ y\in Y \} $.

For the converse inclusion, it suffices to show that $ Y_{0}\vt\BLT\sub \wsp\{(1_{K}\ot b)\wh{\alpha}(y):\ b\in\BLT,\ y\in Y_{0} \} $, since $ Y=Y_{0} $ (by the assumption that (i) holds). Indeed, for any $ s\in G $, $ x\in X $ and $b\in\BLT$, we have
\begin{align*} 
((1_{H}\ot\lambda_{s})\alpha(x))\ot b&=(1_{H}\ot1_{\LT}\ot b\lambda_{s}^{-1})(((1_{H}\ot\lambda_{s})\alpha(x))\ot\lambda_{s})\\
&=(1_{H}\ot1_{\LT}\ot b\lambda_{s}^{-1})\wh{\alpha}((1_{H}\ot\lambda_{s})\alpha(x)).
\end{align*}
Thus, $ (1_{H}\ot\lambda_{s})\alpha(x)\ot b\in\wsp\{(1_{K}\ot c) \wh{\alpha}(y):\ c\in\BLT,\ y\in Y_{0} \} $. Since $ Y_{0}\vt\BLT $ is the w*-closed linear span of the elements of the form $ (1_{H}\ot\lambda_{s})\alpha(x)\ot b $, we get the desired inclusion.

\end{proof}

\begin{remark}\label{rem10} Note that from the last part of the proof of Theorem \ref{mainthm} (the proof of the implication (i)$ \implies $(ii)) it follows that the spatial crossed product $ (X\scr_{\alpha}G,\wh{\alpha}) $ is always a non-degenerate $ L(G) $-comodule.	
\end{remark}

\begin{remark}\label{rem8} The result of  Salmi and Skalski (see Remark \ref{rem6}) can also be proved using Theorem \ref{mainthm} as follows.
	
	\noindent Let $ X $ be a W*-TRO, $ \alpha\colon X\to X\vt \LI $ be an $ \LI $-action on $ X $ such that $ \alpha $ is a non-degenerate W*-TRO-morphism (see Remark \ref{rem6} for the definition) and consider the canonical w*-continuous projection $ P\colon R_{X}\to X $, where $ R_{X} $ is the linking von Neumann algebra of $ X $. Also, let $ \beta\colon R_{X}\to R_{X}\vt\LI $ be the unique W*-$ \LI $-action that extends $ \alpha $. Then, by the proof of Proposition \ref{pro2.14} it follows that $ P\ot \mathrm{id }_{\BLT}\colon R_{X}\vt$ $ \BLT\to X\vt \BLT $ is also a w*-continuous projection and an $ \LI $-comodule morphism with respect to the actions $ \wt{\beta} $ and $ \wt{\alpha} $. Therefore, as in the proof of Proposition \ref{pro3.9} it follows that $ P\ot \mathrm{id }_{\BLT} $ restricts to a w*-projection $ Q\colon R_{X}\fcr_{\beta}G\to X\fcr_{\alpha}G $. Furthermore, following the last part of the proof of Proposition \ref{pro2.14} we get the equality \[ \wh{\alpha}\circ Q=(Q\ot \mathrm{id }_{L(G)})\circ\wh{\beta}, \]  which by Remark \ref{rem3} means exactly that  \[ Q(u\cdot y)=u\cdot Q(y)  \text{ for any }  u\in A(G)  \text{ and }  y\in R_{X}\fcr_{\beta}G. \]  Thus $ Q\colon R_{X}\fcr_{\beta}G\to X\fcr_{\alpha}G  $ is a w*-continuous $ A(G) $-module morphism onto $ X\fcr_{\alpha}G $.  On the other hand, we have $ R_{X}\fcr_{\beta}G=\wsp\{A(G)\cdot (R_{X}\fcr_{\beta}G) \} $ because of the Digernes-Takesaki theorem and Theorem \ref{mainthm}. It follows that $ X\fcr_{\alpha}G=\wsp\{A(G)\cdot (X\fcr_{\alpha}G) \} $ and hence $ X\fcr_{\alpha}G=X\scr_{\alpha}G $ by Theorem \ref{mainthm}.
\end{remark}
  
\section{An application to groups with the approximation property}\label{sec4}

Let $ G $ be a locally compact group. A complex-valued function $ u\colon G\to \mathbb{C} $ is called a \bfc{multiplier} for the Fourier algebra $ A(G) $ if the linear map $  m_{u}(v)=uv $
maps $ A(G) $ into $ A(G) $. The space of all multipliers of $ A(G) $ is denoted by $ MA(G) $ and $ MA(G)\sub C_{b}(G) $. For $ u\in MA(G) $, we denote by $ M_{u} $ the w*-continuous linear map from $ L(G) $ to $ L(G) $ defined by $ M_{u}=m_{u}^{*} $. The function $ u $ is called a \bfc{completely bounded multiplier} if $ M_{u} $ is completely bounded. The space of all completely bounded multipliers is denoted by $ M_{0}A(G) $ and it is a Banach space with the norm $ ||u||_{M_{0}}=||M_{u}||_{cb} $. Moreover, $ A(G)\sub M_{0}A(G) $.

It is known that $ M_{0}A(G) $ is the dual Banach space of $ Q(G) $, where $Q(G)$ denotes the completion of $ \LO $ in the norm
\[||f||_{Q}=\sup\left\{\left|\int_{G}f(s)u(s)\ ds \right|: u\in M_{0}A(G),\ ||u||_{M_{0}}\leq1 \right\}. \] 

We say that $ G $ has \bfc{the approximation property} (AP) if there is a net $ \{u_{i}\}_{i\in I} $ in $ A(G) $, such that $ u_{i}\rightarrow1 $ in the $ \sigma(M_{0}A(G),Q(G)) $-topology.

For more details on completely bounded multipliers and the aproximation property see for example  \cite{dCH}, \cite{CH} and \cite{HK}.

We will need the following theorem due to U. Haagerup and J. Kraus (see \cite[Proposition 1.7 and Theorem 1.9]{HK}).

\begin{thm}\label{thm7.1} For any locally compact group $ G $, the following conditions are equivalent:
	\begin{itemize}
		\item[(i)] $ G $ has the AP.
		\item[(ii)] There is a net $ \{u_{i}\} $ in $ A(G) $, such that, for any von Neumann algebra $ N $, $ (\mathrm{id }_{N}\ot M_{u_{i}})(x)\longrightarrow x$ in the w*-topology for all $ x\in N\vt L(G) $.
	\end{itemize}	
\end{thm}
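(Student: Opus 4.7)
My plan is to prove the two implications separately, using uniform boundedness in both directions and then reducing everything to a pointwise multiplier convergence on $L(G)$, with the hard content of $(i)\Rightarrow(ii)$ being the $\sigma(M_0A(G),Q(G))$-continuity of certain functionals.

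For $(ii)\Rightarrow(i)$, I would first specialize to $N=\mathbb{C}$, so $(ii)$ reads $M_{u_i}(y)\to y$ w*-ly in $L(G)$ for every $y\in L(G)$; applied to $y=\lambda(f):=\int f(s)\lambda_s\, ds$ for $f\in\LO$, this becomes
\[
\int_G u_i(s)\,\phi(s)\,f(s)\, ds \longrightarrow \int_G \phi(s)\,f(s)\, ds, \qquad \phi\in A(G),\; f\in\LO.
\]
A uniform boundedness argument applied to the pointwise-w*-convergent net $\mathrm{id}\otimes M_{u_i}$ (taken on $B(\LT)\vt L(G)$, say) gives $\sup_i\|u_i\|_{M_0}<\infty$. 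Products $\phi f$ with $\phi\in A(G)\cap C_c(G)$ and $f\in\LO$ are total in $\LO$ (using any compactly supported approximate unit of $A(G)$), so the displayed convergence extends by boundedness to $\int u_i g\, ds\to\int g\, ds$ for every $g\in\LO$. Since $\LO$ embeds continuously and densely into $Q(G)$ and the net $(u_i)$ is norm-bounded in the dual $M_0A(G)$, the convergence propagates to the whole of $Q(G)$, giving $u_i\to 1$ in $\sigma(M_0A(G),Q(G))$.

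For $(i)\Rightarrow(ii)$, let $(u_i)$ witness the AP. Again uniform boundedness gives $\sup_i\|u_i\|_{M_0}<\infty$, hence $\sup_i\|\mathrm{id}_N\otimes M_{u_i}\|_{cb}<\infty$ on $N\vt L(G)$ for every von Neumann algebra $N$. By this uniform cb-bound it suffices to check the pointwise w*-convergence against functionals from a total subset of $(N\vt L(G))_*$. On an elementary functional $\omega_1\otimes\phi$ with $\omega_1\in N_*$ and $\phi\in A(G)$, the adjoint identity $M_u^*(\phi)=u\phi$ together with the slice $y:=(\omega_1\otimes\mathrm{id})(x)\in L(G)$ reduces the statement to
\[
\langle y,\, u_i\phi\rangle_{L(G),A(G)} \longrightarrow \langle y,\phi\rangle, \qquad y\in L(G),\;\phi\in A(G).
\]

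The main obstacle, and the heart of the proof, is to turn abstract $\sigma(M_0A(G),Q(G))$-convergence into this concrete multiplier convergence; equivalently, one must show that for each fixed $y\in L(G)$ and $\phi\in A(G)$, the linear functional $\Lambda_{y,\phi}\colon u\mapsto\langle y,u\phi\rangle$ on $M_0A(G)$ is $\sigma(M_0A(G),Q(G))$-continuous, i.e.\ represented by an element of $Q(G)$. I would approach this by writing $\phi(s)=\langle\lambda_s\xi,\eta\rangle$ with $\xi,\eta\in\LT$, approximating $\xi,\eta$ by compactly supported vectors so that the resulting approximants $\phi_n$ lie in $A(G)\cap\LO$ with $\|\phi_n-\phi\|_{A(G)}\to 0$; for such $\phi_n$ the functional $\Lambda_{y,\phi_n}$ is represented by the $\LO$-function $s\mapsto\phi_n(s)\langle y,\delta_s^{\text{reg}}\rangle$ recovered from the regular representation of $y$ on the relevant orbits, placing $\Lambda_{y,\phi_n}\in \LO\subseteq Q(G)$, with uniform control $\|\Lambda_{y,\phi_n}\|_{Q}\leq\|y\|\,\|\phi_n\|_{M_0}$. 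Passing to the limit in the $A(G)$-norm (which dominates the $M_0A(G)$-norm, hence the $Q(G)$-norm of $\Lambda_{y,\cdot}$) places $\Lambda_{y,\phi}$ in $Q(G)$. Applying this continuity to the net $u_i\to 1$ in $\sigma(M_0A(G),Q(G))$ yields $\Lambda_{y,\phi}(u_i)\to\Lambda_{y,\phi}(1)=\langle y,\phi\rangle$, which is exactly the required convergence and finishes the proof.
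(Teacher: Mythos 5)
The paper does not actually prove this theorem; it quotes it from Haagerup and Kraus \cite[Proposition 1.7 and Theorem 1.9]{HK}, so your attempt has to be judged against that source rather than against an argument in the text. Judged so, it has a genuine gap, and it occurs at the point both of your implications lean on: the claim that $\sup_i\|u_i\|_{M_0}<\infty$ ``by uniform boundedness''. The uniform boundedness principle does not apply to nets: a pointwise w*-convergent net of bounded maps is only \emph{eventually} bounded at each point, with the tail depending on the point, so no single tail is pointwise bounded and no uniform bound follows. Worse, the conclusion is false in substance: the definition of the AP imposes no norm bound on the net, and this is precisely what separates the AP from weak amenability (for a group such as $\mathbb{Z}^2\rtimes SL(2,\mathbb{Z})$, which has the AP but is not weakly amenable, one cannot work with a bounded net). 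Once the bound is removed, your reduction of (i)$\Rightarrow$(ii) to the elementary functionals $\omega_1\otimes\phi$, and your passage in (ii)$\Rightarrow$(i) from the dense set of products $\phi f$ in $L^1(G)$ to all of $Q(G)$, both collapse: convergence of an \emph{unbounded} net tested only on a norm-dense or norm-total family of functionals gives nothing.

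You do correctly identify the heart of the matter, namely that for each $x\in N\vt L(G)$ and each normal functional $\omega$ the map $u\mapsto\langle(\mathrm{id}_N\otimes M_u)(x),\omega\rangle$ must be shown to be $\sigma(M_0A(G),Q(G))$-continuous; this is exactly the route of Haagerup and Kraus. But your argument for this continuity does not work even in the special case you treat. For $y\in L(G)$ and $\phi_n\in A(G)\cap C_c(G)$ the functional $u\mapsto\langle y,u\phi_n\rangle$ is in general \emph{not} represented by an $L^1$-function: a generic $y\in L(G)$ has no pointwise values $\langle y,\delta_s^{\mathrm{reg}}\rangle$, and for $y=\lambda_t$ the functional is the point mass $\phi_n(t)\delta_t$, so the formula $s\mapsto\phi_n(s)\langle y,\delta_s^{\mathrm{reg}}\rangle$ is not meaningful. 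That these functionals lie in $Q(G)$ for \emph{all} $x$ and $\omega$ (not just elementary tensors), and that conversely they exhaust $Q(G)$ exactly --- so that condition (ii) really tests $u_i\to1$ against the whole of $Q(G)$ and not merely a dense subspace --- is the actual technical content of \cite[Propositions 1.3--1.7 and Theorem 1.9]{HK}; it rests on the realization of $Q(G)$ as a quotient of a trace-class space and cannot be recovered by the approximation you sketch. A self-contained proof would have to reproduce that machinery; otherwise the statement should simply be cited, as the paper does.
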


\begin{pro}\label{pro7.1} Let $ X\sub B(H) $ be a w*-closed subspace of $ B(H) $, $ G $ be a locally compact group with the AP and $ \alpha\colon X\to X\vt L^{\ap}(G) $ be an $ \LI $-action. Then, for any $ x\in X\fcr_{\alpha}G $, we have:
	\[x\in \overline{A(G)\cdot x}^{\text{w*}}, \]
	where $ h\cdot x=(\mathrm{id }_{X\fcr_{\alpha}G}\ot h)\circ\wh{\alpha}(x) $, for any $ x\in X\fcr_{\alpha}G $ and $ h\in A(G) $. 	
\end{pro}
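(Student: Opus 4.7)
The plan is to extend the dual coaction $\wh{\alpha}$ from $X\fcr_{\alpha}G$ to the ambient von Neumann algebra and then apply Theorem~\ref{thm7.1} there. Set $N:=B(H)\vt\BLT$ and $\Delta:=\mathrm{id }_{B(H)}\ot\delta_{G}\colon N\to N\vt L(G)$, where $\delta_{G}$ is extended to $\BLT$ by $\delta_{G}(y)=W_{G}^{*}(y\ot 1)W_{G}$ (this takes values in $\BLT\vt L(G)$, since $W_{G}\in\LI\vt L(G)$ commutes with $\CIK\vt R(G)$). Then $\Delta$ is a normal injective $*$-homomorphism and a W*-$L(G)$-coaction on $N$ that restricts to $\wh{\alpha}$ on $X\fcr_{\alpha}G$.

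Next I would invoke Theorem~\ref{thm7.1} with this $N$: pick a net $\{u_{i}\}\sub A(G)$ with $(\mathrm{id }_{N}\ot M_{u_{i}})(z)\to z$ weak-$*$ for every $z\in N\vt L(G)$. Applied to $z=\Delta(x)$ with $x\in X\fcr_{\alpha}G$, this yields
\[ (\mathrm{id }_{N}\ot M_{u_{i}})(\Delta(x))\longrightarrow \Delta(x)\quad\text{in the weak-$*$ topology}. \]

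The main technical point — and the step I would be most careful about — is the identity
\[ (\mathrm{id }_{N}\ot M_{u})(\Delta(x))\;=\;\Delta(u\cdot x),\qquad u\in A(G),\ x\in X\fcr_{\alpha}G. \]
Combining the formula $M_{u}=(\mathrm{id }_{L(G)}\ot u)\circ\delta_{G}$ on $L(G)$ with the coassociativity $(\mathrm{id }_{N}\ot\delta_{G})\circ\Delta=(\Delta\ot\mathrm{id }_{L(G)})\circ\Delta$ reduces this to a short slicing argument (most cleanly verified by pairing both sides with elementary tensors $\omega\ot v\in N_{*}\ot A(G)$, which turns the identity into the associativity $\omega((uv)\cdot x)=\omega(v\cdot(u\cdot x))$ of the $A(G)$-module action on $N$); the cocommutativity of $\delta_{G}$ on $L(G)$ is precisely what makes the slot sliced by $u$ match the slot defining $u\cdot x=(\mathrm{id}\ot u)(\wh{\alpha}(x))$. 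Granting the identity, the previous display becomes $\Delta(u_{i}\cdot x)\to\Delta(x)$ weak-$*$; since $\Delta$ is a normal injective $*$-homomorphism between von Neumann algebras, it is a weak-$*$ homeomorphism onto its image, so $u_{i}\cdot x\to x$ weak-$*$ in $N$. Because $X\fcr_{\alpha}G$ is weak-$*$ closed in $N$ and closed under the $A(G)$-action, we conclude $x\in\overline{A(G)\cdot x}^{\text{w*}}$.
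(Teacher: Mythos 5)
Your proposal is correct and follows essentially the same route as the paper: both hinge on the Haagerup--Kraus net from Theorem~\ref{thm7.1} together with the intertwining identity $(\mathrm{id}\ot M_{u})\circ\wh{\alpha}=\wh{\alpha}\circ(\mathrm{id}\ot u)\circ\wh{\alpha}$ (your $(\mathrm{id}_{N}\ot M_{u})(\Delta(x))=\Delta(u\cdot x)$ is this identity read in the ambient algebra), and both conclude by inverting the coaction as a w*-homeomorphism onto its image. The only cosmetic difference is that the paper verifies the identity by composing $M_{u}=(\mathrm{id}_{L(G)}\ot u)\circ\delta_{G}$ with coassociativity directly on $X\fcr_{\alpha}G$, whereas you verify it by pairing against $N_{*}\ot A(G)$.
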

\begin{proof} Let us denote $ Y:=X\fcr_{\alpha}G $ and $ K:=H\ot\LT $. First, we prove that the dual action $ \wh{\alpha}\colon Y\to Y\ft L(G) $ satisfies the condition
	\begin{equation} 
	(\mathrm{id }_{Y}\ot M_{u})\circ\wh{\alpha}=\wh{\alpha}\circ(\mathrm{id }_{Y}\ot u)\circ\wh{\alpha},
	\end{equation}
	for any $ u\in A(G) $.\\
	Indeed, observe that for any $ u,\ h\in A(G) $ and $ y\in L(G) $ we have:
	\begin{align*} 
	\la M_{u}(y), h\ra&=\la y, hu\ra\\
	&=\la\delta_{G}(y),h\ot u\ra\\
	&=\la(\mathrm{id }_{L(G)}\ot u)\circ\delta_{G}(y),h\ra,      
	\end{align*}
	therefore $ M_{u}=(\mathrm{id }_{L(G)}\ot u)\circ\delta_{G} $, for any $ u\in A(G) $. So, for any  $ u\in A(G) $, we have:
	\begin{align*} \wh{\alpha}\circ(\mathrm{id }_{Y}\ot u)\circ\wh{\alpha}&=(\mathrm{id }_{Y}\ot \mathrm{id }_{L(G)}\ot u)\circ(\wh{\alpha}\ot \mathrm{id }_{L(G)})\circ\wh{\alpha}\\
	&=(\mathrm{id }_{Y}\ot \mathrm{id }_{L(G)}\ot u)\circ(\mathrm{id }_{Y}\ot\delta_{G}) \circ\wh{\alpha}\\
	&=\left[ \mathrm{id }_{Y}\ot\left(  (\mathrm{id }_{L(G)}\ot u)\circ\delta_{G}\right)  \right]\circ\wh{\alpha}\\
	&= (\mathrm{id }_{Y}\ot M_{u})\circ\wh{\alpha},
	\end{align*}
	%\begin{align*} 
%	(\mathrm{id }_{N}\ot & M_{u})\circ\wh{\alpha}(x)=\left(\mathrm{id }_{N}\ot(\mathrm{id }_{L(G)}\ot u)\circ\delta_{G} \right)\circ(\mathrm{id }_{X}\ot\delta_{G})(x)\\
%	&=\left(\mathrm{id }_{N}\ot(\mathrm{id }_{L(G)}\ot u)\circ\delta_{G} \right)\circ(\mathrm{id }_{X}\ot\delta_{G})(x)\\
%	&=\left(\mathrm{id }_{N\vt L(G)}\ot u\right)\circ\left(\mathrm{id }_{N}\ot\delta_{G}\right)\circ(\mathrm{id }_{X}\ot\delta_{G})(x)\\
%	&=\left(\mathrm{id }_{N\vt L(G)}\ot u\right)\circ\left(\mathrm{id }_{X}\ot\left(\mathrm{id }_{\BLT}\ot\delta_{G} \right)\circ\delta_{G} \right)(x)\\
%	&=\left(\mathrm{id }_{N\vt L(G)}\ot u\right)\circ\left(\mathrm{id }_{X}\ot\left(\delta_{G}\ot \mathrm{id }_{L(G)} \right)\circ\delta_{G} \right)(x)\\
%	&=\left(\mathrm{id }_{N\vt L(G)}\ot u\right)\circ\left(\mathrm{id }_{X}\ot\delta_{G}\ot \mathrm{id }_{L(G)} \right)\circ(\mathrm{id }_{X}\ot\delta_{G})(x)\\
%	&=(\mathrm{id }_{X}\ot\delta_{G}\ot u)\circ(\mathrm{id }_{X}\ot\delta_{G})(x)\\
%	&=(\mathrm{id }_{X}\ot\delta_{G})\circ\left(\mathrm{id }_{X}\ot \mathrm{id }_{\BLT}\ot u \right)\circ(\mathrm{id }_{X}\ot\delta_{G})(x)\\
%	&=\wh{\alpha}\circ(\mathrm{id }_{N}\ot u)\circ\wh{\alpha}(x)
%	\end{align*}
	and (1) is proved.\\	
  Since $ G $ has the AP, by Theorem \ref{thm7.1} there exists a net $ \{u_{i} \} $ in $ A(G)$ such that $ (\mathrm{id }_{B(K)}\ot M_{u_{i}})(y)\longrightarrow y$ ultraweakly for all $ y\in B(K)\vt L(G) $.\\
   Therefore, $(\mathrm{id }_{Y}\ot M_{u_{i}})(\wh{\alpha}(x))\longrightarrow \wh{\alpha}(x)$ ultraweakly, for any $ x\in Y $, because $ \wh{\alpha}(Y)\sub Y\ft L(G)\sub B(K)\vt L(G) $. Thus (1) implies that $ \wh{\alpha}\circ(\mathrm{id }_{Y}\ot u_{i})\circ\wh{\alpha}(x)\longrightarrow \wh{\alpha}(x)$ ultraweakly, for any $ x\in Y $. On the other hand, $ \wh{\alpha} $ is a w*-continuous isometry, therefore it is a w*-w*-homeomorphism from $ Y $ onto $ \wh{\alpha}(Y) $ (see e.g. \cite{BLM}, Theorem A.2.5) and thus $(\mathrm{id }_{Y}\ot u_{i})\circ\wh{\alpha}(x)\longrightarrow x$ ultraweakly, that is $ u_{i}\cdot x\longrightarrow x $ ultraweakly, for any $ x\in Y $. 
	
\end{proof}

The next result is due to J. Crann and M. Neufang (see \cite{CN}). Here, we give an alternative proof as an application of Theorem \ref{mainthm} and Proposition \ref{pro7.1}.

\begin{pro}\label{pro7.2}  Let  $ G $ be a locally compact group with the AP and $ (X,\alpha) $  an $ \LI $-comodule. Then, \[ X\fcr _{\alpha}G=X\scr_{\alpha}G. \]	
\end{pro}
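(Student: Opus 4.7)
The plan is to deduce the equality directly from the main theorem (Theorem~\ref{mainthm}) and from Proposition~\ref{pro7.1}. By Theorem~\ref{mainthm}, it suffices to verify condition (iii), namely that
\[
X\fcr_{\alpha}G=\wsp\{h\cdot y:\ h\in A(G),\ y\in X\fcr_{\alpha}G\},
\]
where $h\cdot y=(\mathrm{id}_{X\fcr_{\alpha}G}\ot h)(\wh{\alpha}(y))$.

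First I would observe that the inclusion ``$\supseteq$'' is immediate: since $\wh{\alpha}(Y)\sub Y\ft L(G)$ for $Y:=X\fcr_{\alpha}G$ (see Remark~\ref{rem4}), applying a slice map $\mathrm{id}_{Y}\ot h$ with $h\in A(G)=L(G)_{*}$ lands back in $Y$, so $h\cdot y\in Y$ for every $h\in A(G)$ and $y\in Y$, and the w*-closed span of such elements is contained in $Y$.

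For the nontrivial inclusion, I would invoke Proposition~\ref{pro7.1}, which, thanks to the AP, produces for each $x\in X\fcr_{\alpha}G$ a net $\{u_{i}\}\sub A(G)$ such that $u_{i}\cdot x\to x$ in the w*-topology. Since each $u_{i}\cdot x$ belongs to $\{h\cdot y:\ h\in A(G),\ y\in X\fcr_{\alpha}G\}$, we get
\[
x\in\wsp\{h\cdot y:\ h\in A(G),\ y\in X\fcr_{\alpha}G\},
\]
which establishes the inclusion ``$\subseteq$''. Combining both inclusions yields condition (iii) of Theorem~\ref{mainthm}, and therefore $X\fcr_{\alpha}G=X\scr_{\alpha}G$.

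There is essentially no obstacle here: all the hard work has been carried out in Theorem~\ref{mainthm} (which translates the equality of the two crossed products into the module-density condition on $X\fcr_{\alpha}G$) and in Proposition~\ref{pro7.1} (which uses the Haagerup--Kraus characterization of the AP from Theorem~\ref{thm7.1} to produce the approximating net in $A(G)$, via the identity $M_{u}=(\mathrm{id}_{L(G)}\ot u)\circ\delta_{G}$ together with the coassociativity of $\wh{\alpha}$). The present proposition is just the clean combination of these two inputs.
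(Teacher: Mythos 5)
Your proof is correct and follows exactly the paper's own argument: apply Proposition \ref{pro7.1} to obtain the w*-density of $A(G)\cdot y$ for each $y\in X\fcr_{\alpha}G$, deduce condition (iii) of Theorem \ref{mainthm}, and conclude the equality of the two crossed products. The only addition is your explicit check of the easy inclusion ``$\supseteq$'', which the paper leaves implicit.
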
 
\begin{proof} Since $ G $ has the AP, by Proposition \ref{pro7.1} it follows that $ y\in \overline{A(G)\cdot y}^{\text{w*}}$ for any $ y\in X\fcr_{\alpha}G $, which implies that:
	\[X\fcr_{\alpha}G=\wsp\{h\cdot y:\ y\in X\fcr_{\alpha}G,\ h\in A(G) \}. \] 
Therefore, Theorem \ref{mainthm} yields that $ X\fcr _{\alpha}G=X\scr_{\alpha}G $. 
	
\end{proof} 
    
\section{Crossed products and ideals of $ \LO $}     
  
For this section, let $ J $ be a closed left ideal of $ \LO $ and consider its annihilator $ J^{\perp}\sub\LI $.  In \cite{AKT}, M. Anoussis, A. Katavolos and I. Todorov define two $ L(G) $-subbimodules of $ \BLT $, namely $ \B $ and $ \R $ and ask whether these bimodules are equal. They proved that this is the case when $ G $ is either weakly amenable discrete or compact or abelian. 

This result was later generalized by J. Crann and M. Neufang \cite{CN} who proved that if $ G $ has the AP, then $ \B=\R $. Their approach is based on their non-commutative Fej\'{e}r-type theorem for crossed products of von Neumann algebras by groups with the AP (see \cite{CN}).

Here, we prove that $ \B $ and $ \R $ can be realized respectively as the spatial crossed product and the Fubini crossed product of a certain $ \LI $-action on $ J^{\perp} $. Thus, Theorem \ref{mainthm} provides a necessary and sufficient condition for the equality $ \B=\R $.

For any $ h\in\LO $, the map $ \Theta(h)\colon\BLT\to\BLT $ is defined by
\[\Theta(h)(T)=\int_{G} h(s)\mathrm{Ad }\rho_{s}(T)\ ds, \]
where the integral is understood in the w*-topology. We define 
\[\R=\ker\Theta(J):=\{T\in\BLT:\ \Theta(h)(T)=0,\ \forall h\in J \}. \]
On the other hand, $ \B $ is the normal $ L(G) $-bimodule generated by $ J^{\perp} $, that is
\[\B=\wsp\{\lambda_{s}f\lambda_{t}:\ s,\ t\in G,\ f\in J^{\perp} \}. \]
Then, $ \B $ and $ \R $ are $ L(G) $-bimodules and $ \B \sub \R $. For more details regarding $ \B $ and $ \R $, as well as for theoriginal definition of $ \R $, see \cite{AKT}. 

Consider the normal *-monomorphism $ \Phi\colon\BLT\to\BLT\vt\BLT $ given by
\[\Phi(T)=V_{G}^{*}\delta_{G}(T)V_{G}=V_{G}^{*}W^{*}_{G}(T\ot1)W_{G}V_{G}. \]
Then, it is not hard to see that \begin{equation}\label{e1}
\Phi(f)=\alpha_{G}(f),\quad f\in\LI
\end{equation} and \begin{equation}\label{e2}
\Phi(\lambda_{s})=1\ot\lambda_{s},\quad s\in G.
\end{equation}
Also, we have
\[\wh{\alpha_{G}}\circ\Phi=(\Phi\ot \mathrm{id }_{L(G)})\circ\delta_{G} \]  
and therefore, $ \Phi $ is a W*-$ L(G) $-comodule isomorphism from $ (\BLT ,\delta_{G})$ onto $ (\LI\scr_{\alpha_{G}}G,\wh{\alpha_{G}}) $.

Since $ J $ is a closed left ideal of $ \LO $, it follows that $ \alpha_{G}(J^{\perp})\sub J^{\perp}\vt\LI $, i.e. $ J^{\perp} $ is an $ \LI $-subcomodule of $ (\LI,\alpha_{G}) $. 

\begin{pro}\label{pro1} The $ L(G) $-comodule isomorphism $ \Phi $ defined above maps $ \B $ onto $ J^{\perp}\scr_{\alpha_{G}}G $ and $ \R $ onto  $ J^{\perp}\fcr_{\alpha_{G}}G $.	In particular, $ \B $ and $ \R $ are $ L(G) $-subcomodules of $ (\BLT,\delta_{G}) $.
\end{pro}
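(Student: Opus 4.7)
The plan is to treat the two claims separately. The identification $\Phi(\B) = J^{\perp}\scr_{\alpha_{G}} G$ is essentially algebraic, exploiting the multiplicativity of $\Phi$ together with a covariance relation. The identification $\Phi(\R) = J^{\perp}\fcr_{\alpha_{G}} G$ reduces to establishing a single intertwining identity between the slice map $(h \ot \mathrm{id}) \circ \Phi$ and the map $\Theta(h)$.

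For the first identification, I would use that $\Phi$ is a normal $*$-homomorphism satisfying $\Phi(f) = \alpha_{G}(f)$ and $\Phi(\lambda_{s}) = 1 \ot \lambda_{s}$ to write $\Phi(\lambda_{s} f \lambda_{t}) = (1 \ot \lambda_{s})\alpha_{G}(f)(1 \ot \lambda_{t})$. Setting $t = e$ and taking w*-closed spans gives one inclusion. For the other, I would verify, by a direct computation on $\LT \ot \LT$, the covariance
\[(1 \ot \lambda_{t})\alpha_{G}(f)(1 \ot \lambda_{t})^{*} = \alpha_{G}(L_{t} f),\qquad (L_{t}f)(u) := f(t^{-1}u),\]
since both sides act as multiplication by $(s, u) \mapsto f(t^{-1}us)$. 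This rewrites $(1 \ot \lambda_{s})\alpha_{G}(f)(1 \ot \lambda_{t}) = (1 \ot \lambda_{st})\alpha_{G}(L_{t^{-1}} f)$. It remains to note that $J$ being a closed left ideal of $\LO$ forces $J$ to be invariant under the left translations $h \mapsto L_{t} h$, by approximating the point mass $\delta_{t}$ by left translates of an $\LO$-approximate identity. Dualizing, $J^{\perp}$ is $L_{t}$-invariant for every $t \in G$, hence $L_{t^{-1}} f \in J^{\perp}$ and the displayed element lies in the w*-closed span defining $J^{\perp}\scr_{\alpha_{G}} G$.

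For the second identification, the central step is the intertwining identity
\[(h \ot \mathrm{id})(\Phi(T)) = \Theta(h)(T),\qquad T \in \BLT,\ h \in \LO.\qquad(*)\]
Since $\mathrm{span}\{f\lambda_{s} : f \in \LI,\ s \in G\}$ is w*-dense in $\BLT$ (this is the classical realization $\LI \rtimes_{\alpha_{G}} G = \BLT$, and products collapse via $\lambda_{s} f \lambda_{s}^{*} = L_{s}f$), and both sides of $(*)$ are w*-continuous and linear in $T$, it suffices to verify $(*)$ for $T = f\lambda_{s}$. Using the multiplicativity of $\Phi$ and the fact that $\lambda$ and $\rho$ commute, both sides then reduce to $\Theta(h)(f)\lambda_{s}$ by a short Haar-measure calculation, with $\Theta(h)(f)$ being the multiplication operator $u \mapsto \int h(r) f(ur)\,dr$. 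Once $(*)$ is in hand,
\[T \in \R\ \iff\ (h \ot \mathrm{id})(\Phi(T)) = 0 \ \forall h \in J\ \iff\ \Phi(T) \in J^{\perp} \vt \BLT,\]
where the last equivalence uses the slice-map characterization of $J^{\perp} \ft \BLT$ and the identification $J^{\perp} \vt \BLT = J^{\perp} \ft \BLT$ (valid because $\BLT$ is injective). Since $\Phi(T)$ automatically lies in $\Phi(\BLT) = \LI \fcr_{\alpha_{G}} G$ by Digernes--Takesaki (Theorem \ref{thm3.2}) applied to $(\LI, \alpha_{G})$, intersecting with $J^{\perp} \vt \BLT$ gives $\Phi(T) \in (J^{\perp} \vt \BLT) \cap (\LI \fcr_{\alpha_{G}} G) = J^{\perp} \fcr_{\alpha_{G}} G$, as required.

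The main obstacle I expect is the verification of $(*)$ on generators, which is essentially careful bookkeeping with left Haar measure and the commutation relations among $V_{G}$, $W_{G}$, $\lambda$ and $\rho$. The covariance identity used for $\B$ is shorter but also requires some care; its conceptual content is the passage from the left-ideal property of $J$ to the translation-invariance of $J^{\perp}$.
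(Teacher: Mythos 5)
Your proposal is correct and follows essentially the same route as the paper: the identity $\B=\wsp\{\lambda_s f:\ s\in G,\ f\in J^{\perp}\}$ via the covariance relation and translation-invariance of $J^{\perp}$, the reduction of the second claim to the intertwining identity $\Theta(h)=(h\ot \mathrm{id})\circ\Phi$ checked on the generators $f\lambda_s$, and the identification $J^{\perp}\fcr_{\alpha_G}G=(\LI\fcr_{\alpha_G}G)\cap(J^{\perp}\vt\BLT)$ via Digernes--Takesaki. The only (harmless) difference is that you spell out the slice-map characterization of $J^{\perp}\vt\BLT$ and the approximate-identity argument for translation-invariance of $J$, which the paper takes for granted.
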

\begin{proof} First, note that from the covariance relations 
	\[\lambda_{s}f\lambda_{s^{-1}}=f_{s},\quad s\in G, \]
	(where $ f_{s}(t)=f(s^{-1}t) $)  we get that 
	\[\B=\wsp\{\lambda_{s}f:\ s\in G,\ f\in J^{\perp} \}, \]
	thus \eqref{e1} and \eqref{e2} imply that $ \Phi(\B)=J^{\perp}\scr_{\alpha_{G}}G $.\\
	It remains to show that $ \Phi(\R)=J^{\perp}\fcr_{\alpha_{G}}G $.\\
	Note that
	  \begin{align*} 
	      J^{\perp}\fcr_{\alpha_{G}}G&=(J^{\perp}\vt\BLT)^{\wt{\alpha_{G}}}\\
	      &=(\LI\vt\BLT)^{\wt{\alpha_{G}}}\cap(J^{\perp}\vt\BLT)\\
	      &=(\LI\scr_{\alpha_{G}}G)\cap(J^{\perp}\vt\BLT),
	  \end{align*}
	since $ (\LI\vt\BLT)^{\wt{\alpha_{G}}}=(\LI\scr_{\alpha_{G}}G) $ by the Digernes-Takesaki theorem.\\	
	Therefore if $ y\in \LI\scr_{\alpha_{G}}G $, then
	\[y\in J^{\perp}\fcr_{\alpha_{G}}G\iff(h\ot \mathrm{id }_{\BLT})(y)=0,\ \forall h\in J. \]
	Since $ \R $ is the intersection of the kernels of the maps $ \Theta(h) $ for $ h\in J $, and  $ J^{\perp}\fcr_{\alpha_{G}}G $ is the intersection of the kernels of the maps $ (h\ot \mathrm{id }_{\BLT}) $, for $ h\in J $, it suffices to prove that
	\[\Theta(h)=(h\ot \mathrm{id }_{\BLT})\circ\Phi,\quad\forall h\in \LO. \] 
	Since both maps are linear and w*-continuous, it suffices to prove the equality for elements of the form $ f\lambda_{s} $, for $ f\in\LI $ and $ s\in G $, whose linear span is w*-dense in $ \BLT $. But $ \Theta(h) $ and $ (h\ot \mathrm{id }_{\BLT})\circ\Phi $ are $ L(G) $-module maps since clearly \[\Theta(h)(f\lambda_{s})=\Theta(h)(f)\lambda_{s}\] and
	\[(h\ot id)(\Phi(f\lambda_{s}))=(h\ot id)(\Phi(f)(1\ot\lambda_{s}))=(h\ot id)(\Phi(f))\lambda_{s},\]
	therefore it suffices to prove that \[\Theta(h)(f)=(h\ot id)(\Phi(f))\text{ , for all }  h\in \LO  \text{ and }  f\in\LI.\]
	Indeed, first observe that $ \Theta(h)(f)=f_{h} $, where $ f_{h}(t)=\int_{G}h(s)f(ts)ds $. Thus, for any $ k\in \LO $,
	\begin{align*} \la \Theta(h)(f),\ k \ra&=\la f_{h},\ k \ra\\&=\int_{G}f_{h}(t)k(t)dt\\&=\iint_{G\times G}h(s)f(ts)k(t)dsdt\\&=\la\alpha_{G}(f),\ h\ot k\ra\\
	&=\la(h\ot id)(\alpha_{G}(f)),\  k\ra\\
	&=\la(h\ot id)(\Phi(f)),\  k\ra
	\end{align*}
	and so, $ \Theta(h)(f)=(h\ot id)(\Phi(f)) $ and the proof is complete.

\end{proof}

\begin{remark}\label{rem9} Following \cite{NRS}, let $ CB^{\sigma,L(G)}_{\LI}(\BLT) $ be the algebra of all w*-continuous completely bounded $ \LI $-bimodule morphisms on $ \BLT $ which leave $ L(G) $ invariant. Also, let $ V_{\mathrm{inv}}^{b}(G) $ and $ V_{\mathrm{inv}}^{\ap}(G) $ be the space of continuous right invariant Schur multipliers and the space of measurable right invariant Schur multipliers respectively (see \cite[section 4]{NRS} for the precise definitions).  It is shown in \cite[Theorem 4.3]{NRS} that there are  completely isometric isomorphisms 
	\[ M_{0}A(G)\cong V_{\mathrm{inv}}^{b}(G)\cong V_{\mathrm{inv}}^{\ap}(G)\cong CB^{\sigma,L(G)}_{\LI}(\BLT) \]
In particular, every completely bounded multiplier $ u\in M_{0}A(G) $ is identified with a map $ S_{u}\in CB^{\sigma,L(G)}_{\LI}(\BLT) $ (sometimes called a Herz-Schur multiplier), which is the unique element in $ CB^{\sigma,L(G)}_{\LI}(\BLT) $ that extends the completely bounded and w*-continuous map $ M_{u}=m_{u}^{*}\colon L(G)\to L(G) $ defined in section \ref{sec4}.

\noindent In the case of an element $ u\in A(G) $, the map $ S_{u} $ can be described in terms of the $ L(G) $-action $ \delta_{G} $ on $ \BLT $. Namely, we have:
\[S_{u}=(\mathrm{id }_{\BLT}\ot u)\circ\delta_{G}:\BLT\to\BLT,\]
for any $ u\in A(G) $.

\noindent Indeed, since $ \delta_{G}(f)=f\ot 1 $ and $ \delta_{G}(\lambda_{s})=\lambda_{s}\ot\lambda_{s} $ for any $ f\in \LI $ and $ s\in G $, it follows easily that 	 
	\[(\mathrm{id }_{\BLT}\ot u)\circ\delta_{G}(f\lambda_{s})=u(s)f\lambda_{s},\qquad s\in G,\ f\in\LI,\ u\in A(G). \]
On the other hand, $ M_{u}(\lambda_{s})=u(s)\lambda_{s} $ for any $ s\in G $ and therefore if $ u\in A(G) $, then $ (\mathrm{id }_{\BLT}\ot u)\circ\delta_{G} $ is the unique completely bounded w*-continuous $ \LI $-bimodule morphism on $ \BLT $ that extends $ M_{u} $. That is $ S_{u}=(\mathrm{id }_{\BLT}\ot u)\circ\delta_{G} $ for all $ u\in A(G) $.

\noindent Therefore, the $ A(G) $-module structure of  $\BLT $ induced by the $ L(G) $-action $ \delta_{G}\colon \BLT\to \BLT\vt L(G) $ (recall Remark \ref{rem3}) can be described in terms of Schur multipliers $ S_{u} $ with $ u\in A(G) $, that is
\[S_{u}(T)=(\mathrm{id }_{\BLT}\ot u)\circ\delta_{G}(T)=u\cdot T, \]
for all $ u\in A(G)  $ and $ T\in\BLT $.
\end{remark}

The next corollary, which is an immediate consequence of Theorem \ref{mainthm} and Proposition \ref{pro1}, provides a necessary and sufficient condition so that $ \B$ $ = \R $. 

\begin{cor}\label{cor1} The following conditions are equivalent:
	\begin{itemize}
		\item[(a)] $ \B=\R $,
		\item[(b)] $ (\R,\delta_{G}) $ is a non-degenerate $ L(G) $-comodule, 
		\item[(c)] $ \R=\wsp\{S_{u}(T):\ u\in A(G),\ T\in\R \} $.
	\end{itemize}	
\end{cor}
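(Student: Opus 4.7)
The plan is to treat this corollary as a direct translation of Theorem \ref{mainthm} via the $L(G)$-comodule isomorphism $\Phi$ provided by Proposition \ref{pro1}. By Proposition \ref{pro1}, $\Phi$ restricts to an $L(G)$-comodule isomorphism from $\R$ onto $J^{\perp}\fcr_{\alpha_{G}}G$ which sends $\B$ onto $J^{\perp}\scr_{\alpha_{G}}G$. Consequently condition (a) is equivalent to the equality $J^{\perp}\scr_{\alpha_{G}}G=J^{\perp}\fcr_{\alpha_{G}}G$, which is precisely the hypothesis of Theorem \ref{mainthm} applied to the $\LI$-comodule $(J^{\perp},\alpha_{G}|_{J^{\perp}})$.

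Next, I would invoke Theorem \ref{mainthm} to obtain the equivalence of that equality with (ii) non-degeneracy of $(J^{\perp}\fcr_{\alpha_{G}}G,\wh{\alpha_{G}})$ and with (iii) the equality $J^{\perp}\fcr_{\alpha_{G}}G=\wsp\{h\cdot y: h\in A(G), y\in J^{\perp}\fcr_{\alpha_{G}}G\}$. Since $\Phi$ is a w*-homeomorphic $L(G)$-comodule isomorphism, by Remark \ref{rem3} it intertwines the $A(G)$-module actions, so non-degeneracy transfers from $J^{\perp}\fcr_{\alpha_{G}}G$ to $\R$ and yields (b), while condition (iii) transfers to the analogous statement for $\R$ with the action $u\cdot T=(\mathrm{id}_{\BLT}\ot u)\circ\delta_{G}(T)$.

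The last step is to identify this $A(G)$-module action on $\R$ with the Schur-multiplier action appearing in (c). This is exactly the content of Remark \ref{rem9}, which records that $S_{u}(T)=(\mathrm{id}_{\BLT}\ot u)\circ\delta_{G}(T)=u\cdot T$ for every $u\in A(G)$ and every $T\in\BLT$. Hence condition (iii) translated through $\Phi$ reads precisely $\R=\wsp\{S_{u}(T):u\in A(G),T\in\R\}$, which is (c).

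There is essentially no obstacle here beyond keeping the bookkeeping straight: one must verify that $\Phi$ indeed respects the $A(G)$-module structures (which is automatic from being an $L(G)$-comodule map, per Remark \ref{rem3}) and that the Schur multiplier identification of Remark \ref{rem9} applies to $\R$, which it does since $\R$ is an $L(G)$-subcomodule of $(\BLT,\delta_{G})$ by Proposition \ref{pro1}. With these remarks in place the three equivalences are immediate.
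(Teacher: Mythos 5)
Your proposal is correct and follows essentially the same route as the paper's own proof: transfer via the comodule isomorphism $\Phi$ of Proposition \ref{pro1}, apply Theorem \ref{mainthm}, and identify the $A(G)$-action with the Schur multipliers $S_{u}$ as in Remark \ref{rem9}. The paper merely states these steps more tersely; your additional bookkeeping (that $\Phi$ intertwines the $A(G)$-module structures) is accurate and consistent with Remark \ref{rem3}.
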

\begin{proof} The equivalence between (a) and (b) is clear from Theorem \ref{mainthm} and Proposition \ref{pro1}. Also, the equivalence between (b) and (c) follows again from Theorem \ref{mainthm}, because $ S_{u}(T)=(\mathrm{id }_{\BLT}\ot u)\circ\delta_{G}(T)=u\cdot T $, for any $ u\in A(G)  $ and $ T\in\BLT $.
	
\end{proof}

\begin{remark} Note that from Proposition \ref{pro1} and Proposition \ref{pro7.1} it follows that if $ G $ has the AP, then there exists a net $ (u_{i})_{i\in I} $ in $ A(G) $, such that
	\[S_{u_{i}}(T)\longrightarrow T,\text{ ultraweakly for all }T\in\BLT. \]
Therefore, Corollary \ref{cor1} implies that if $ G $ has the AP, then $ \B=\R $.
\end{remark}

\section*{Acknowledgements}
The author would like to thank Aristides Katavolos for his constant encouragement and many fruitful discussions and  Michael Anoussis for helpful suggestions and discussions. The research work was supported by the Hellenic Foundation for Research and Innovation (HFRI) and the General Secretariat for Research and Technology (GSRT), under the HFRI PhD Fellowship Grant (GA. no. 74159/2017).

%\newpage

\end{document}